\DeclareRobustCommand{\stirling}{\genfrac\{\}{0pt}{}}
\theoremstyle{plain}
\newtheorem{theorem}{Theorem}[section]
\newtheorem{lemma}[theorem]{Lemma}
\newtheorem{prepos}[theorem]{Proposition}
\newtheorem{corol}[theorem]{Corollary}
\theoremstyle{definition}
\newtheorem{definition}[theorem]{Definition}
\newtheorem{remark}[theorem]{Remark}
\newtheorem{conjecture}{Conjecture}
\renewcommand{\Im}{\operatorname{Im}}
\renewcommand{\Re}{\operatorname{Re}}
\newcommand{\mesh}{\mathrm{mesh}}
\begin{document}

\author[O.~Katkova]{Olga Katkova}

\address{Department of Science and Mathematics, Wheelock College, USA}
\email{olga.m.katkova@gmail.com }

\author[M.~Tyaglov]{Mikhail Tyaglov}

\address{School of Mathematical Sciences, Shanghai Jiao Tong University}
\email{tyaglov@sjtu.edu.cn}

\author[A.~Vishnyakova]{Anna Vishnyakova}

\address{School of Mathematics and Computer Sciences, Kharkov National V.N.Karazin University}
\email{anna.m.vishnyakova@univer.kharkov.ua}




\title[Hermite-Poulain theorems]
{Hermite-Poulain theorems for linear finite difference operators}


\keywords {Hyperbolic polynomials; Laguerre-P\'olya class; finite difference operators;
hyperbolicity preserving linear operators; mesh of polynomial}

\subjclass{30C15; 30D15; 30D35; 26C10; 16C10}

\date{\small \today}



\begin{abstract}

We establish analogues of the Hermite-Poulain theorem for linear finite difference operators with constant coefficients defined on sets of polynomials with roots on a straight line, in a strip, or in a half-plane. We also consider the central finite difference operator of the form
$$
\Delta_{\theta, h}(f)(z)=e^{i\theta}f(z+ih)-e^{-i\theta}f(z-ih), \quad\theta\in[0,\pi),\ \ h\in\mathbb{C}\setminus\{0\},
$$
where $f$ is a polynomial or an entire function of a certain kind, and prove that the roots of $\Delta_{\theta, h}(f)$ are simple under some conditions. Moreover, we prove that the operator $\Delta_{\theta, h}$
does not decrease the mesh on the set of polynomials with roots on a line and find the minimal mesh. The asymptotics of the roots of $\Delta_{\theta, h}(p)$ as $|h|\to\infty$ is found for any complex polynomial $p$.   Some other interesting roots preserving properties of the operator $\Delta_{\theta, h}$ are also studied, and a few examples are presented.
\end{abstract}

\maketitle


\vspace{2cm}

\setcounter{equation}{0}
\section{Introduction}

One of the most important problems in the zero distribution theory of polynomials and transcendental entire functions 
is to describe linear transformations that map polynomials with all roots in a given area into the set of polynomial with all roots in another given area.
One of the first researchers who started to study such type of problems systematically were Hermite and, later, Laguerre who considered 
linear operator preserving the set $\mathcal{HP}$ of hyperbolic polynomials, that is, real polynomials with only real roots.

In 1914~\cite{polsch}, P\'olya and Schur completely described the operators acting diagonally on the standard monomial
basis $1$, $x$, $x^2$, \dots of $\mathbb{R}[x]$ and preserving $\mathcal{HP}$.

Later the study of linear
transformations sending real-rooted polynomials to real-rooted polynomials was continued by many authors including
N.\,Obreschkov, S.\,Karlin, B.\,Levin, G.\,Csordas, T.\,Craven, K.\,de Boor, R.\,Varga, A.\,Iserles, S.\,N{\o}rsett, E.\,Saff etc.
Among recent authors it is especially worth to  mention P. Br\"{a}nd\'{e}n and J. Borcea~\cite{BrandenBorcea} (see also~\cite{BrandenBorcea2,BrandenBorcea3}), who completely
characterized all linear operators preserving real-rootedness of real polynomials (and some other root location preservers).

The present work was inspired by the paper~\cite{BKS}, where the authors made an attempt to transfer
the existing theory of real-rootedness preservers to the basis of Pochhammer symbols and to develop a finite difference analogue of the P\'olya-Schur theory. Some results on this topic were also obtained in~\cite{KSV}. But both works deal with finite  difference operators with real step defined on a subspace of $\mathcal{HP}$.

A natural extension of
polynomials with real roots is the so-called Laguerre-P\'olya class.

\begin{definition}\label{th:d2} 
	A real entire function $f$ is said to be in the {\it
Laguerre-P\'olya class}, denoted as $\mathcal{LP}$, if
\begin{equation}\label{e2}
 f(z) = c z^n e^{- a z^2+b z}\prod_{k=1}^\infty
\left(1-\frac {z}{x_k} \right)e^{\tfrac {z}{x_k}},
\end{equation}
where $c, b, x_k \in  \mathbb{R}$,  $x_k\ne 0$,  $a \geqslant 0$,
$n$ is a non-negative integer and $\sum\limits_{k=1}^\infty x_k^{-2} <
\infty$. The product in the right-hand side of~\eqref{e2} can be
finite or empty (in the latter case the product equals 1).
\end{definition}

This class is essential in the theory of entire functions due to the fact that these
and only these functions  can be uniformly approximated on compact subsets of 
$\mathbb{C}$ by a sequence of real polynomials with only real zeros.
For various properties and
characterizations of the
Laguerre-P\'olya class see, e.g., \cite[p. 100]{pol}, \cite{polsch}, \cite[Chapter VII]{lev},
\cite[pp. 42--47]{HW},  \cite[Kapitel II]{O} or  \cite{CV}.

One of the first results on  $\mathcal{LP}$-preservation properties of
linear finite difference operators was obtained by G.\,P\'olya.
In~\cite{pol1}, he established that if
$f \in \mathcal{LP},$ then  $f(x+ih) + f(x-ih)\in \mathcal{LP}$
for every $h \in \mathbb{R}$. N.G.\,de Bruijn observed that this fact can be refined as follows, cf.,~\cite[Theorem~3]{Br}.
\begin{theorem}[de Bruijn]\label{Theorem.de_Bruijn}
For any $h\in \mathbb{R}$ and $\xi\in\mathbb{C}\setminus\{0\}$, the
function
\begin{equation*}\label{e33}
\xi f(x+ih) + \overline{\xi} f(x-ih)
\end{equation*}
belongs to the class $\mathcal{LP}$ whenever $f\in\mathcal{LP}$. Here $\overline{\xi}$ means the complex conjugate to $\xi$.
\end{theorem}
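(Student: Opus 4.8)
The plan is to show that $g(z)\eqbd \xi f(z+ih)+\overline{\xi}\,f(z-ih)$ is a real entire function all of whose zeros are real, and then to upgrade ``only real zeros'' to genuine membership in $\mathcal{LP}$ by approximation. First I would record that $g$ is real entire: since $f$ is real (so $\overline{f(w)}=f(\bar w)$), a direct computation gives $\overline{g(z)}=\overline{\xi}\,f(\bar z-ih)+\xi\,f(\bar z+ih)=g(\bar z)$, whence $g(x)=2\Re\bigl(\xi f(x+ih)\bigr)\in\mathbb{R}$ for real $x$. Next I would reduce to the case $h>0$: replacing $h$ by $-h$ merely interchanges the two summands and replaces $\xi$ by $\overline{\xi}$, while multiplying $\xi$ by a positive constant does not move the zeros; the trivial case $h=0$ gives $g=2\Re(\xi)f\in\mathcal{LP}$.

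The engine of the proof is the modulus inequality
\begin{equation*}
\left|\frac{f(z+ih)}{f(z-ih)}\right|>1\qquad(\Im z>0,\ h>0),
\end{equation*}
with the reversed inequality for $\Im z<0$. I would prove it factor by factor from the representation \eqref{e2}. For a real zero $x_k$, writing $w=z-x_k=u+iv$ with $v>0$, one has $|w+ih|^2-|w-ih|^2=4vh>0$, so each linear factor has modulus exceeding $1$; the same computation covers the factor $z^n$. For the factor $e^{-az^2+bz}$ the ratio equals $e^{-4iazh+2ibh}$, of modulus $e^{4ahv}\ge 1$ since $a\ge 0$, while the normalizers $e^{z/x_k}$ contribute unimodular ratios $e^{2ih/x_k}$. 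Granting this, suppose $g(z_0)=0$ with $\Im z_0\ne 0$. If $f(z_0-ih)=0$, the relation $\xi f(z_0+ih)=-\overline{\xi}\,f(z_0-ih)$ forces $f(z_0+ih)=0$ as well, which is impossible since $f$ has only real zeros and the two points $z_0\pm ih$ cannot both be real when $h\ne 0$. Otherwise $\bigl|f(z_0+ih)/f(z_0-ih)\bigr|=|\overline{\xi}|/|\xi|=1$, contradicting the strict inequality above. Hence $g$ has no non-real zeros.

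Finally I would deduce $g\in\mathcal{LP}$ rather than merely ``real-rooted.'' By the characterization quoted after Definition~\ref{th:d2}, there is a sequence of hyperbolic polynomials $p_n$ converging to $f$ uniformly on compact sets; then $g_n\eqbd \xi p_n(z+ih)+\overline{\xi}\,p_n(z-ih)\to g$ uniformly on compacta. Each $g_n$ is a real polynomial and, by the same factorwise modulus argument applied to the finite product $p_n$, has only real zeros, hence is hyperbolic and lies in $\mathcal{LP}$. Since $\mathcal{LP}$ is closed under locally uniform limits, $g\in\mathcal{LP}$.

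I expect the main obstacle to be the transcendental case of the modulus inequality: justifying the factorwise estimate for the (possibly infinite) product in \eqref{e2} and, in particular, treating the degenerate zero-free case $f=ce^{bz}$, where the ratio is identically unimodular and strict inequality fails. There $g=ce^{bz}\cdot 2\Re\bigl(\xi e^{ibh}\bigr)$ is an explicit constant multiple of $e^{bz}\in\mathcal{LP}$, so the conclusion persists; this subcase, together with a check that $g\not\equiv 0$ for $f\not\equiv 0$, is the only place requiring separate care. (Note that the approximation step alone already yields both real-rootedness and membership, so the direct estimate on the transcendental $f$ in the second paragraph, while conceptually the heart of the matter, can be bypassed if one prefers to avoid these convergence issues.)
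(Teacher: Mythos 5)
Your proof is correct. The paper itself does not prove this statement (it is imported from de Bruijn's paper, cited as~\cite{Br}), but your argument is the standard one and coincides with the technique the authors themselves use for the closely related results in the paper: the factorwise modulus estimate $\left|f(z+ih)/f(z-ih)\right|>1$ for $\Im z>0$ is exactly the mechanism behind their Lemma~\ref{Lemma.step.1} and the sufficiency part of Theorem~\ref{th:mth2}, and your reduction to hyperbolic polynomials plus locally uniform limits is the route the authors explicitly mention (after Definition~\ref{th:d2} and at the start of Section~\ref{section:entire_functions}). The edge cases you flag --- the zero-free factor $ce^{bz}$, where the ratio is unimodular, and the possibility $g\equiv 0$ (e.g.\ $f\equiv 1$, $\xi=i$) --- are real but harmless, and you handle or at least isolate them correctly; the approximation argument alone, as you note, already closes the transcendental case.
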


One of the main goals of the present work is to extend this result of N.G.\,de Bruijn and to establish an analogue of
the so-called Hermite-Poulain theorem, see, e.g., \cite[p.~4]{O} or~\cite[Part~3, Ch.~1, no.~35]{PS1}, claiming that a finite order linear differential operator $T = a_0+a_1d/dx+\cdots+a_kd^k/dx_k$ with
constant coefficients is hyperbolicity preserver if and only if its symbol polynomial $Q_T(t)=a_0+a_1t+\cdots+a_kt^k$ is hyperbolic.

In this work, we do not restrict ourselves by real or pure imaginary steps of finite differences, and consider
somewhat general central finite difference operators with arbitrary nonzero complex steps:
\begin{equation}\label{Delta.general}
\Delta_{\alpha,h}f(x)=\alpha f(x+h)-\alpha^{-1} f(x-h),
\end{equation}
where $\alpha,h\in\mathbb{C}\setminus\{0\}$. We are interested in root location of the function
$\Delta_{\alpha,h}f(x)$ with respect to the root location of the polynomial (or entire
function) $f$. 
We also consider  compositions of operators of the form~\eqref{Delta.general}




If all roots of a polynomial $p$ lie on the line
\begin{equation*}\label{line}
L_{\varphi,c}=\{ae^{i\varphi}+c,\ a\in\mathbb{R}\},
\end{equation*}
where $\varphi\in[0,\pi)$ and $c\in\mathbb{C}$ are fixed numbers, then the roots of the polynomial $\Delta_{\alpha,h}(p)$ lie on the same line if and only if
$|\alpha|=1$, and $\arg h=\varphi\pm\dfrac{\pi}2$. This fact allows us to establish a finite difference analogue of the Hermite-Poulain theorem mentioned above and even a more general fact.

\begin{theorem}\label{th:mth1}
Let $L_{\varphi_1,c_1}=\{ae^{i\varphi_1}+c_1,\ a\in\mathbb{R}\}$ and $L_{\varphi_2,c_2}=\{ae^{i\varphi_2}+c_2,\ a\in\mathbb{R}\}$,
$c_1,c_2\in\mathbb{C}$, $\varphi_1,\varphi_2\in[0,\pi)$, be two lines on the complex plane.
The operator
\begin{equation}\label{e3}
T(p) (x) = \sum_{k=l}^m a_k p(x-kh),\qquad a_ma_l\neq0,
\end{equation}
sends any polynomial with zeros on the line $L_{\varphi_1,c_1}$ to a polynomial with zeros on the line $L_{\varphi_2,c_2}$ if and only if the following conditions hold:
\begin{itemize}
\item[1)] $\varphi_1=\varphi_2=\varphi$ for some $\varphi\in[0,\pi)$;
\item[2)] $(l+m)\,h=2\Im\left(e^{-i\varphi}(c_2-c_1)\right)\cdot e^{i\left(\tfrac{\pi}2+\varphi\right)}$;
\item[3)] All the zeros of the generating function
\begin{equation}\label{g1} 
Q (t) = \sum_{k=l}^m a_k t^k.
\end{equation}
lie on the unit circle $\mathbb{T}:=\{ z\in\mathbb{C} : |z| = 1 \}$;
\end{itemize}
\end{theorem}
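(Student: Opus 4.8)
The plan is to read the operator $T$ through its generating function $Q$ and to reduce each linear factor of $Q$ to a central difference operator to which the observation preceding the statement applies. The starting point is the identity $T=Q(S)$, where $S$ is the backward shift $Sp(x)=p(x-h)$: writing $Q(t)=a_m t^{l}\prod_{j=1}^{m-l}(t-t_j)$ gives $T=a_m S^{l}\prod_{j=1}^{m-l}(S-t_j)$. A direct computation shows that a single factor, after shifting the argument by $h/2$, is a central difference operator: if $t_j=e^{i\beta_j}$ then $(S-t_j)p(x+h/2)=-e^{i\beta_j/2}\bigl(e^{i\beta_j/2}p(x+h/2)-e^{-i\beta_j/2}p(x-h/2)\bigr)=-e^{i\beta_j/2}\Delta_{e^{i\beta_j/2},\,h/2}(p)(x)$. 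I would first normalize by the affine substitution $w=e^{-i\varphi_1}(z-c_1)$, which carries $L_{\varphi_1,c_1}$ onto the real axis and turns $T$ into $\sum_k a_k\hat p(w-k\eta)$ with $\eta=he^{-i\varphi_1}$; perpendicularity of $h$ to the line means exactly that $\eta$ is purely imaginary, say $\eta=i\tau$.

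For the sufficiency, assume 1)--3). Since $|t_j|=1$ and $\eta=i\tau$ is purely imaginary, the observation preceding the statement applies to each $\Delta_{e^{i\beta_j/2},\,i\tau/2}$ on every horizontal line: it preserves the set of polynomials with roots on $\{\Im w=s\}$. Combining this with the half-step shift in the displayed identity, I would prove by induction that each factor $(S-t_j)$ raises the height of the line of roots by exactly $\tau/2$, independently of $\beta_j$, so that $\prod_j(S-t_j)$ sends roots from $\{\Im w=0\}$ to $\{\Im w=(m-l)\tau/2\}$; the monomial $S^{l}$ raises the height by a further $l\tau$, and the constant $a_m$ is harmless. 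Hence $T$ sends real-rooted polynomials to polynomials with roots on $\{\Im w=\tfrac{m+l}{2}\tau\}$. Translating back, the image is a line parallel to $L_{\varphi_1,c_1}$, and it coincides with $L_{\varphi_2,c_2}$ precisely when $\varphi_1=\varphi_2$ and $\tfrac{m+l}{2}\tau=\Im\!\bigl(e^{-i\varphi}(c_2-c_1)\bigr)$, which after substituting $\tau=-ihe^{-i\varphi}$ is exactly condition 2).

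For the necessity the delicate point is to recover 1) and 3) together with the perpendicularity that is built into 2). Condition 1) and the translation already come from linear test polynomials: applying $T$ to $p(x)=x-z_0$ with $z_0\in L_{\varphi_1,c_1}$ produces a single root at $z_0+\tfrac{Q'(1)}{Q(1)}h$, which traces a line parallel to $L_{\varphi_1,c_1}$ as $z_0$ varies and must lie on $L_{\varphi_2,c_2}$, forcing $\varphi_1=\varphi_2$. To force $|t_j|=1$ and the perpendicularity I would use high-degree test polynomials: applying $T$ to $(x-z_0)^n$ and setting $u=(x-z_0)/h$ reduces the roots to the zeros of $g_n(u)=\sum_k a_k(u-k)^n$. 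For $|u|$ large, $g_n(u)=u^n\sum_k a_k(1-k/u)^n\approx u^nQ(e^{-n/u})$, so the escaping roots satisfy $e^{-n/u}\approx t_j$, i.e. $u\approx -n/(\log t_j+2\pi i\ell)$ for $\ell\in\mathbb Z$. These must lie on the image line for every $\ell$; letting $\ell\to\pm\infty$ forces $\arg h\equiv\varphi_2\pm\tfrac\pi2$, and then the finite values of $\ell$ force $\log t_j$ to be purely imaginary, that is $|t_j|=1$, which is condition 3).

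The main obstacle is precisely this asymptotic analysis. Making the approximation $g_n(u)\approx u^nQ(e^{-n/u})$ rigorous---locating the zeros of $g_n$ to within $o(n)$, verifying that a genuine zero sits near each $u\approx -n/(\log t_j+2\pi i\ell)$, and controlling uniformity in $\ell$---is the technical heart of the argument and where I expect most of the work to lie; it belongs to the same circle of estimates as the root asymptotics developed elsewhere in the paper. A secondary subtlety is the degenerate case $Q(1)=0$, where linear inputs may collapse to constants, and the case $l+m=0$, in which condition 2) reduces to $\Im\!\bigl(e^{-i\varphi}(c_2-c_1)\bigr)=0$ and the perpendicularity of $h$ must be extracted from the asymptotic argument rather than from 2) itself; these I would treat separately but along the same lines.
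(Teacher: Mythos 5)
Your proposal follows essentially the same route as the paper: the factorization $T=a_mS^{l}\prod_{j}(S-e^{i\beta_j})$ with each unit-modulus factor acting as a de Bruijn central difference that shifts the root line by $h/2$ (whence condition 2) by bookkeeping), parallelism from translated test polynomials, and conditions 1)--3) extracted from the test polynomials $x^n$ via the limit $\sum_k a_k(1-kh/x)^n\to Q(e^{-hy})$. The asymptotic step you flag as the main obstacle is dispatched in the paper more easily than you anticipate: after the inversion $y=n/x$ the rescaled polynomials converge uniformly on compacta to $Q(e^{-hy})$, their zeros lie on explicit circles that flatten onto the real axis, and Hurwitz's theorem then forces every zero $y_{j,\ell}=-(\log t_j+2\pi i\ell)/h$ of the limit function to be real --- each fixed $\ell$ separately, so no uniformity in $\ell$ and no individual localization of the zeros of $g_n$ is needed.
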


Thus, the operator $T$ defined in~\eqref{e3} can preserve a class of polynomials with zeros on a line if and only if $l=-m$ and $h=\alpha e^{i\left(\varphi+\tfrac{\pi}2\right)}$, $\alpha\in\mathbb{R}\setminus\{0\}$. In particular, the following fact is true.

\begin{corol}\label{Corol.Hermite.Poulain}
The linear operator $T$ of the form \eqref{e3} preserves the set of 
hyperbolic polynomials if and only if the following 
conditions hold
\begin{itemize}
	\item [1)] $ \Re h =0$ ;

     \item[2)] $l = - m$;

	\item [3)] All roots of the generating function (\ref{g1}) belong to the unit circle 
$\mathbb{D}$;
\item[4)] $a_{m}\cdot a_{-m}>0$.
\end{itemize}
\end{corol}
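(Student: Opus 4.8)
The plan is to read off Corollary~\ref{Corol.Hermite.Poulain} from Theorem~\ref{th:mth1} by specializing both lines to the real axis and then supplying the one extra ingredient that distinguishes hyperbolic polynomials from arbitrary polynomials with real zeros, namely that a hyperbolic polynomial must itself have \emph{real} coefficients. Writing $\mathbb{R}=L_{0,0}$ (that is, $\varphi=0$, $c=0$), a polynomial is hyperbolic exactly when it is real and all of its zeros lie on $L_{0,0}$. So I would split the claim into a ``zeros on the line'' part, governed by Theorem~\ref{th:mth1}, and a ``reality'' part.

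First I would apply Theorem~\ref{th:mth1} with $\varphi_1=\varphi_2=0$ and $c_1=c_2=0$. Condition~1) of the theorem holds automatically, condition~2) becomes $(l+m)h=0$, which forces $l=-m$ since $h\neq0$, and condition~3) is exactly that all zeros of the generating function~\eqref{g1} lie on the unit circle. Combined with the remark following Theorem~\ref{th:mth1}, which records that preservation of a line forces $h=\alpha e^{i(\varphi+\pi/2)}$ with $\alpha\in\mathbb{R}\setminus\{0\}$, the choice $\varphi=0$ gives $h=i\alpha$, i.e. $\Re h=0$. This already yields conditions 1)--3) of the corollary and guarantees that $T$ sends every polynomial with only real zeros to a polynomial with only real zeros.

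It remains to single out, among such operators, those for which $T(p)$ is a \emph{real} polynomial whenever $p$ is. With $l=-m$ and $h=i\alpha$ I would compute $\overline{T(p)(x)}$ for real $x$ and real-coefficient $p$: using $\overline{p(w)}=p(\bar w)$ and $\overline{i\alpha}=-i\alpha$, one finds $\overline{T(p)(x)}=\sum_{k=-m}^{m}\overline{a_{-k}}\,p(x-ik\alpha)$, so $T(p)$ is real for all real $p$ if and only if $a_k=\overline{a_{-k}}$ for every $k$. The task then reduces to showing that, under condition~3), this symmetry is equivalent to condition~4), $a_m a_{-m}>0$.

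The main obstacle, and the only genuinely non-formal step, is this last equivalence. I would argue as follows. The Laurent polynomial $Q(t)$ equals $t^{-m}\widetilde{Q}(t)$ with $\widetilde{Q}(t)=\sum_{j=0}^{2m}a_{j-m}t^{j}$, and condition~3) says all $2m$ zeros of $\widetilde{Q}$ lie on $\mathbb{T}$. Since the product of those zeros is $a_{-m}/a_m$, this forces $|a_m|=|a_{-m}|$; together with $a_m a_{-m}>0$ it gives $a_{-m}=\overline{a_m}$. Moreover a polynomial all of whose zeros lie on $\mathbb{T}$ is conjugate-reciprocal up to a unimodular factor $\lambda$, that is $t^{2m}\,\overline{\widetilde{Q}(1/\bar t)}=\lambda\,\widetilde{Q}(t)$, which in coefficients reads $a_k=\bar\lambda\,\overline{a_{-k}}$ for all $k$. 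Evaluating at $k=m$ and inserting $a_{-m}=\overline{a_m}$ forces $\lambda=1$, whence $a_k=\overline{a_{-k}}$ for every $k$; conversely $a_k=\overline{a_{-k}}$ gives $a_m a_{-m}=|a_m|^2>0$. Assembling the pieces: conditions 1)--3) place the zeros of $T(p)$ on the real axis, conditions 3)--4) make $T(p)$ real, and the two together say precisely that $T(p)$ is hyperbolic whenever $p$ is. The converse direction retraces the same implications, since preservation of hyperbolicity entails both real-rootedness (Theorem~\ref{th:mth1} and its remark) and reality of $T(p)$, completing the proof.
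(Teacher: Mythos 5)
Your proof is correct and follows essentially the same route as the paper: conditions 1)--3) are read off from Theorem~\ref{th:mth1} specialized to $L_{0,0}=\mathbb{R}$ (with $\Re h=0$ supplied by the orthogonality of the step to the line, as in the remark after that theorem), and condition 4) is identified as the reality condition on the coefficients of $T(p)$. The only difference is that you spell out, via the conjugate-reciprocal structure of a polynomial with all zeros on the unit circle, why conditions 3) and 4) together are equivalent to the symmetry $a_k=\overline{a_{-k}}$ --- a verification the paper merely asserts (``Condition 4) provides the reality of the coefficients of $T(p)$'') and encodes only implicitly in the factored form of $Q$ given in the remark following the corollary.
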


\begin{remark}
The conditions 2) and 3) of Corollary~\ref{Corol.Hermite.Poulain} mean that the generating function \eqref{g1} has the form 
$$ Q(t)= C\prod_{k=1}^{2m}\left(e^{-i\theta_k/2}\sqrt{t}+e^{i\theta_k/2}\frac{1}{\sqrt{t}} \right),$$
where the numbers $C$ and $ \theta_k \   ( k=1, 2, \ldots,  2m)$  are real. Thus, Corollary~\ref{Corol.Hermite.Poulain} states 
that every linear operator of the form \eqref{e3} that preserves the set of  hyperbolic polynomials is a 
composition of  linear operators of the form \eqref{e33} with $|\alpha|=1$.

The condition $4)$ in Corollary~\ref{Corol.Hermite.Poulain} provides the reality of the coefficients of $T(p)$.  
\end{remark}

Note also that in Theorem~\ref{th:mth1} as well as in Corollary~\ref{Corol.Hermite.Poulain}, the step of the finite difference operator $T$ must be orthogonal to the line where the roots of the given polynomial lie. This
condition differs from the conditions of an analogue of the Hermite-Poulain theorem established in~\cite{BKS}, where the authors considered the operator~\eqref{e3} with a real step undertaking an attempt to transfer the existing
theory of real-rootedness preservers to the basis of Pochhammer symbols and to develop a 
finite difference analogue of the P\'olya-Schur theory. They proved that the operator $T$ preserves a specific subset of the set of hyperbolic polynomials if and only if its generating function~\eqref{g1} has only nonnegative  roots. They also found out that
a linear operator of the form
$
T(p) (x) = \sum\limits_{k=l}^m c_k(x) p(x-kh),
$
with polynomials coefficients~$c_k(x)$, $k=l,\ldots,m$, and with a real step $h$ preserves the set of hyperbolic polynomials 
if and only if at most one of coefficients $c_k(x)$ is not identically zero, and $c_k(x)$ is hyperbolic for such a $k$, see~\cite{BKS} for more details. 

\vspace{2mm}

The fact that every linear operator of the form~\eqref{e3} that preserves the set of  hyperbolic polynomials is a 
composition of  linear operators of the form~\eqref{Delta.general} motivates us to study such kind of operators  in more detail.  Let us put $|\alpha|=1$ in~\eqref{Delta.general}. Then without loss of generality one can consider the operator
\begin{equation}\label{Delta.theta}
\Delta_{\theta,h}f(x)=\dfrac{e^{i\theta} f(x+ih)-e^{-i\theta} f(x-ih)}{2i},
\end{equation}
where $\theta\in\left[0,\pi\right)$ and $\arg h=\varphi$ for some $\varphi\in[0,\pi)$. If the roots of the polynomial $f$
lie on the line $L_{\varphi,c}$, then the roots of $\Delta_{\theta,h}(f)$ not only lie on the same line $L_{\varphi,c}$ but also are of \textit{multiplicity one} (in fact, we prove more, see~Theorem~\ref{th:simplicity.poly}).
And this property holds in the case when $f$ is an entire function from the closure of the set of polynomials with roots on a line $L_{\varphi,c}$ (Theorem~\ref{th:simplicity.ent.func}).

Note that the fact that the operator $\Delta_{0,h}$ with $h\in\mathbb{R}\setminus\{0\}$ preserves the real-rootedness of polynomials and entire functions
was established by P\'olya in~\cite[Hilfssatz II]{pol1} (see also Theorem~\ref{Theorem.de_Bruijn}). However, the simplicity of the roots of $\Delta_{\theta,h}(f)$ seems to be a new property of finite differences.
In particular, we have the following extenstion of the P\'olya-de Bruijn theorem.

\begin{theorem}\label{th:mth3}
Let $h\in\mathbb{R}\setminus\{0\}$, $\theta \in[0,2\pi) $. For any $f \in \mathcal{LP}$ , all the zeros of $\Delta_{\theta,h}(f)$ are real and simple.
\end{theorem}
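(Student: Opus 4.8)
My plan is to reduce to the case $h>0$ and then to recognise $\Delta_{\theta,h}f$ as the imaginary part of a Hermite--Biehler function. For $h<0$ one checks directly that $\Delta_{\theta,h}f=-\Delta_{-\theta,|h|}f$, so the zeros coincide with those of an operator with positive step, and it suffices to treat $h>0$ (the argument below in fact works for every real $h\neq0$ after an obvious change of signs). Setting $g(z)=e^{i\theta}f(z+ih)$ and using that $f$ has real coefficients, one has $\overline{g(\bar z)}=e^{-i\theta}f(z-ih)=:g^{*}(z)$, whence
$$\Delta_{\theta,h}f(z)=\frac{g(z)-g^{*}(z)}{2i}$$
is a real entire function whose restriction to $\mathbb{R}$ equals $\Im g(x)$.

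The heart of the argument will be the strict inequality
$$|f(z+ih)|>|f(z-ih)|\qquad(\Im z>0),$$
which I would establish directly from the Hadamard product \eqref{e2} by comparing moduli factor by factor. The exponential factors $e^{bz}$ and $e^{z/x_k}$ have equal modulus at $z+ih$ and at $z-ih$ (their moduli depend only on $\Re z$), so they contribute exactly $1$; the Gaussian factor $e^{-az^{2}}$ contributes $e^{4ah\,\Im z}\ge1$; and the factor $z^{n}$ together with each canonical factor $1-z/x_k$ contributes strictly more than $1$, since for $w=-z$, resp. $w=x_k-z$,
$$|w-ih|^{2}-|w+ih|^{2}=-4h\,\Im w=4h\,\Im z>0.$$
Because the product in \eqref{e2} converges, the ratio $|f(z+ih)|/|f(z-ih)|$ equals the convergent product of these factors and is therefore $>1$ whenever $f$ has at least one zero, or $n\ge1$, or $a>0$. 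This is precisely the statement that $g$ is a Hermite--Biehler function. I expect this step to be the main technical obstacle: one must be careful about the convergence of the infinite product, about obtaining \emph{strict} inequality, and about the contribution of the Gaussian factor.

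From here the absence of nonreal zeros follows quickly. If $z_{0}$ were a zero of $\Delta_{\theta,h}f$ with $\Im z_{0}\ne0$, then $g(z_{0})=g^{*}(z_{0})$ and hence $|f(z_{0}+ih)|=|f(z_{0}-ih)|$; the inequality above rules out $\Im z_{0}>0$, and symmetry (the zeros of a real entire function are symmetric about $\mathbb{R}$) rules out $\Im z_{0}<0$. The possibility $g(z_{0})=g^{*}(z_{0})=0$ off $\mathbb{R}$ cannot occur, since the zeros of $g$ all lie on the line $\Im z=-h$. Thus every zero of $\Delta_{\theta,h}f$ is real.

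For simplicity I would write, for real $x$, $\Delta_{\theta,h}f(x)=R(x)\sin(\theta+\phi(x))$ with $R(x)=|f(x+ih)|>0$ and $\phi(x)=\arg f(x+ih)$, and compute from the logarithmic derivative of \eqref{e2}
$$\phi'(x)=\Im\frac{f'(x+ih)}{f(x+ih)}=-h\left(\frac{n}{x^{2}+h^{2}}+2a+\sum_{k}\frac{1}{(x-x_k)^{2}+h^{2}}\right)<0,$$
which is strictly negative for every real $x$ except in the degenerate case $f=ce^{bz}$ (no zeros, $n=0$, $a=0$). At a real zero $x_{0}$ one has $\sin(\theta+\phi(x_{0}))=0$, hence $\cos(\theta+\phi(x_{0}))=\pm1$, so $(\Delta_{\theta,h}f)'(x_{0})=R(x_{0})\cos(\theta+\phi(x_{0}))\,\phi'(x_{0})\ne0$ and $x_{0}$ is simple. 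Finally I would dispose of the degenerate case $f=ce^{bz}$ separately: there $\Delta_{\theta,h}f$ is a constant multiple of $e^{bx}$, hence either identically zero or zero-free, and the conclusion holds trivially.
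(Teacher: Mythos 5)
Your proof is correct, and its core coincides with the paper's: the simplicity argument in both treatments reduces to the same computation, namely that the imaginary part of the logarithmic derivative of the Hadamard product \eqref{e2} at $x+ih$ equals
$-h\left(\tfrac{n}{x^{2}+h^{2}}+2a+\sum_{k}\tfrac{1}{(x-x_k)^{2}+h^{2}}\right)$,
which is strictly negative for $h>0$ unless $f=ce^{bz}$. You package this as the strict monotonicity of the phase $\phi(x)=\arg f(x+ih)$ in the polar form $R(x)\sin(\theta+\phi(x))$, while the paper packages it as the impossibility of $f'(x_0+ih)/f(x_0+ih)=f'(x_0-ih)/f(x_0-ih)$ at a putative multiple real root $x_0$; these are two wrappings of one identity. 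Where you genuinely diverge is the reality of the zeros: the paper simply invokes de Bruijn's result (Theorem~\ref{Theorem.de_Bruijn}), whereas you reprove it via the Hermite--Biehler inequality $|f(z+ih)|>|f(z-ih)|$ for $\Im z>0$, verified factor by factor in \eqref{e2}. This buys a self-contained proof at the cost of the bookkeeping you correctly flag: the exponential convergence factors cancel exactly since $\Re(z\pm ih)=\Re z$, the partial products of the per-factor ratios are increasing and hence the limit stays strictly above $1$ once there is at least one zero (or $n\geqslant1$ or $a>0$), and when $f(z-ih)=0$ the inequality still holds because $f(z+ih)\neq0$ for $\Im z>0$. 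Two small points worth making explicit: the polar form needs a continuous branch of $\arg f(x+ih)$, which exists because $f$ is zero-free on the line $\Im z=h$; and your separate disposal of the degenerate case $f=ce^{bz}$ (where $\Delta_{\theta,h}f$ is either identically zero or zero-free) is in fact slightly more careful than the paper's, which at the corresponding step asserts that $f$ must be constant.
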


For example, if we consider the polynomial $p(z)=z^n$ with only one (multiple) root, then the roots $\lambda_k(\theta)$ of the polynomial $\Delta_{\theta,1}(p)$ are simple and lie
on the real line $\mathbb{R}$. In fact, the roots have the form (see Lemma~\ref{Lemma.roots.Delta.x^n}):

\vspace{2mm}

\noindent for $\theta\in\left(0,\pi\right)$, 
\begin{equation}\label{cot.root.general}
\lambda_{k,n}(\theta)=\cot\dfrac{\pi k-\theta}{n},\qquad k=1,\ldots,n,
\end{equation}

\noindent and for $\theta=0$,
\begin{equation}\label{cot.root.0}
\lambda_{k,n}(0)=\cot\dfrac{\pi k}{n},\qquad k=1,\ldots,n-1.
\end{equation}
The polynomials 
\begin{equation}\label{poly.Qn}
Q_n(x):=\Delta_{\theta,1}(x^n),\qquad n\in\mathbb{N},
\end{equation}
and their roots $\lambda_{k,n}(\theta)$ play a very important role in our study.
For instance, they appear in the estimates of the minimal and maximal roots of $\Delta_{\theta,h}(p)$ for $p\in\mathcal{HP}$.

\vspace{2mm}

Let us denote by $\mu_{max}(p)$ and $\mu_{min}(p)$ the {\it maximal} and the {\it minimal} roots of a given hyperbolic polynomial $p$, respectively. Then the following theorem holds.
\begin{theorem}\label{th:mth5}
For any  $p \in \mathcal{HP}$ , $\deg p =n \geqslant 1$,  $\theta\in[0,\pi),  $ 
and for any $h>0$, the following inequalities hold 
$$
\mu_{max}(\Delta_{\theta, h}(p)) \leqslant \mu_{max}(p)+h\cdot \mu_{max}(Q_n) \quad\text{and}\quad 
\mu_{min}(\Delta_{\theta, h}(p)) \geqslant \mu_{min}(p)+h\cdot \mu_{min}(Q_n).
$$
\end{theorem}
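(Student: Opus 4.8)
The plan is to turn the operator into the imaginary part of one analytic quantity and then locate the extreme zeros through the argument of $p$ along the horizontal line $\operatorname{Im}z=h$. First I would use that $p$ has real coefficients, so that $p(x-ih)=\overline{p(x+ih)}$ for real $x$; substituting this into~\eqref{Delta.theta} collapses the difference to $\Delta_{\theta,h}(p)(x)=\Im\bigl(e^{i\theta}p(x+ih)\bigr)$. Since every root of $p$ is real and $h>0$, the point $x+ih$ never meets a root, so $p(x+ih)\neq0$ on $\mathbb{R}$. Writing $p(z)=c\prod_{j=1}^{n}(z-x_j)$ with $x_j\in\mathbb{R}$ and (replacing $p$ by $-p$ if necessary, which only changes $\Delta_{\theta,h}(p)$ by a sign) $c>0$, I set $\psi(x):=\arg p(x+ih)=\sum_{j=1}^{n}\operatorname{arccot}\frac{x-x_j}{h}$, taking $\operatorname{arccot}\colon\mathbb{R}\to(0,\pi)$ to be the continuous strictly decreasing branch. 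Then $\psi$ is continuous and strictly decreasing, with $\psi(x)\to0$ as $x\to+\infty$ and $\psi(x)\to n\pi$ as $x\to-\infty$, so $\psi$ maps $\mathbb{R}$ onto $(0,n\pi)$.

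By the reduction above, the real zeros of $\Delta_{\theta,h}(p)$ are precisely the solutions of $\psi(x)=k\pi-\theta$ with $k\in\mathbb{Z}$; by Theorem~\ref{th:simplicity.poly} all zeros are in fact real, so $\mu_{max}$ and $\mu_{min}$ are the largest and smallest among them. Intersecting the admissible targets $k\pi-\theta$ with $(0,n\pi)$, the smallest one is $\pi-\theta$ and the largest one is $M:=n\pi-\theta$ for $\theta\in(0,\pi)$ and $M:=(n-1)\pi$ for $\theta=0$. Running the identical computation on $p(x)=x^{n}$ (all roots $0$, $h=1$), where $\psi(x)=n\operatorname{arccot}x$, recovers $\mu_{max}(Q_n)=\cot\frac{\pi-\theta}{n}$ and $\mu_{min}(Q_n)=\cot\frac{M}{n}$, in agreement with~\eqref{cot.root.general}--\eqref{cot.root.0}.

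Now I would run the comparison. Let $a=\mu_{max}(p)$ and let $X$ be any real zero of $\Delta_{\theta,h}(p)$. Since $\psi(X)$ is an admissible target, $\psi(X)\geq\pi-\theta$; and since $x_j\leq a$ gives $X-x_j\geq X-a$ and hence $\operatorname{arccot}\frac{X-x_j}{h}\leq\operatorname{arccot}\frac{X-a}{h}$, we obtain $\pi-\theta\leq\psi(X)\leq n\operatorname{arccot}\frac{X-a}{h}$. As $\operatorname{arccot}$ is decreasing with inverse $\cot$ on $(0,\pi)$ and $\tfrac{\pi-\theta}{n}\in(0,\pi)$, this forces $\frac{X-a}{h}\leq\cot\frac{\pi-\theta}{n}=\mu_{max}(Q_n)$, which is the first inequality (it holds for every zero, in particular the largest). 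Symmetrically, with $b=\mu_{min}(p)$ and $Y$ any real zero, $\psi(Y)\leq M$ while $x_j\geq b$ yields $\psi(Y)\geq n\operatorname{arccot}\frac{Y-b}{h}$; hence $\frac{Y-b}{h}\geq\cot\frac{M}{n}=\mu_{min}(Q_n)$, which is the second inequality.

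The main obstacle is the bookkeeping at the two delicate points: fixing a single continuous branch of the argument and verifying the limits and strict monotonicity of $\psi$, and identifying the extreme admissible targets — here the case $\theta=0$ must be separated, because then the degree of $\Delta_{\theta,h}(p)$ drops to $n-1$ and the largest target becomes $(n-1)\pi$ instead of $n\pi-\theta$. Everything else is forced by the monotonicity of $\operatorname{arccot}$; in particular, because the bounds hold for every zero rather than only the extreme one, I never need to determine which index $k$ the maximal or minimal zero corresponds to.
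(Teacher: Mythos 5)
Your proof is correct, but it follows a genuinely different route from the paper. The paper proves Theorem~\ref{th:mth5} by writing $\Delta_{\theta,h}(p)=\tfrac{1}{n!}\,p\boxplus\Delta_{\theta,h}(x^n)$ (formula~\eqref{th:r4}) and then invoking the Oishi--Takagi--Walsh theorem on the Walsh convolution (Theorem~\ref{Theorem.Walsh}), whose proof rests on Grace's apolarity theorem: if the roots of $p$ lie in $[\alpha,\beta]$ and those of $q$ in $[\gamma,\delta]$, the roots of $p\boxplus q$ lie in $[\alpha+\gamma,\beta+\delta]$, which with $q=\Delta_{\theta,h}(x^n)=h^nQ_n(x/h)$ gives exactly the two inequalities. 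You instead reduce $\Delta_{\theta,h}(p)(x)$ to $\Im\bigl(e^{i\theta}p(x+ih)\bigr)$ and track the monotone phase $\psi(x)=\sum_j\operatorname{arccot}\frac{x-x_j}{h}$; comparing $\psi$ with the extreme admissible targets $\pi-\theta$ and $M$ and using the monotonicity of $\operatorname{arccot}$ yields the bounds. Your argument is more elementary and self-contained (no Grace/apolarity machinery), it re-derives the reality, simplicity and count of the zeros for free from the strict monotonicity of $\psi$ (so your citation of Theorem~\ref{th:simplicity.poly} for reality --- which is anyway a statement about simplicity, reality coming from Corollary~\ref{Corol.Hermite.Poulain} --- is not even needed), and it localizes each individual zero via $\psi(X_k)=k\pi-\theta$ rather than only the extremes. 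What the paper's route buys is uniformity: the same decomposition~\eqref{th:r4} and the mesh-monotonicity of the Walsh convolution are reused immediately afterwards to prove Theorem~\ref{th:mth4}, whereas your phase argument does not obviously extend to the mesh estimate. Your handling of the delicate points (single continuous branch of the argument, the normalization $c>0$, and the separate value of $M$ when $\theta=0$, where the degree drops to $n-1$) is accurate and consistent with~\eqref{min.max.Qn.zeros}--\eqref{min.max.Qn.zeros.0}.
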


\noindent From~\eqref{cot.root.general}--\eqref{cot.root.0} it follows that

for $\theta\in(0,\pi)$,
\begin{equation}\label{min.max.Qn.zeros}
\mu_{min}(Q_n)=\lambda_{n,n}(\theta)=-\cot\dfrac{\theta}{n}\quad
\text{and}\quad
\mu_{max}(Q_n)=\lambda_{1,n}(\theta)=\cot\dfrac{\pi-\theta}{n},
\end{equation}

and for $\theta=0$,
\begin{equation}\label{min.max.Qn.zeros.0}
\mu_{min}(Q_n)=
\lambda_{n-1,n}(0)=-\cot\dfrac{\pi}{n}\quad
\text{and}\quad
\mu_{max}(Q_n)=\lambda_{1,n}(0)=\cot\dfrac{\pi}{n}.
\end{equation}

\begin{definition}\label{th:d4}
Given a hyperbolic polynomial $p$ denote by $\mesh(p)$ its mesh, i.e. the minimal distance between its roots
$$
\mathrm{mesh} (p) := \min\limits_{1\leq j\leq n-1} (x_{j+1}-x_j),
$$
where $p=C(x-x_1)(x- x_2) \cdots(x- x_n)$, $C\neq0$, and  $x_1 \leq x_2 \leq \ldots \leq x_n$. 

If $p$ has a multiple root, then $\mesh(p)=0$ by definition. Polynomials of degree at most $1$ are defined to have mesh equal to $+\infty$.
  
\end{definition}

Since by Theorem~\ref{th:simplicity.poly} the roots of the polynomial $\Delta_{\theta,h}(p)$ are simple for any polynomial $p\in\mathcal{HP}$, its mesh is positive
$\mesh\,\Delta_{\theta,h}(p)>0$, and one can pose a question of the minimal value of $\mesh\,\Delta_{\theta,h}(p)$. The answer to this question
is provided by the following theorem.
\begin{theorem}\label{th:mth4}
Let  a polynomial $p$ of degree $n$ is hyperbolic, and let $h>0$. Then the inequality 
\begin{equation}\label{mesh.main.ineq}
\mathrm{mesh}\,\Delta_{\theta, h}(p)\geqslant\max\{\mathrm{mesh}\,p,\mathrm{mesh}\,\Delta_{\theta, h}(x^n)\}
\geqslant\mathrm{mesh}\,\Delta_{\theta, h}(x^n)
\end{equation}
holds for any $\theta\in(0,\pi)$ whenever $n\geqslant2$.

If $ \theta = 0$, the inequality~\eqref{mesh.main.ineq} is true whenever
 $n\geqslant3$.
\end{theorem}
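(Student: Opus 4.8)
The plan is to combine a winding-number representation of the operator with a pair of geometric estimates, proving the two inequalities hidden in the $\max$ separately. Write $p(x)=C\prod_{k=1}^n(x-x_k)$ with $C>0$ and real $x_1\le\cdots\le x_n$. Expanding the definition~\eqref{Delta.theta} and using $p(x-ih)=\overline{p(x+ih)}$ for real $x$ gives $\Delta_{\theta,h}(p)(x)=\Im\!\bigl(e^{i\theta}p(x+ih)\bigr)$, hence
\[
\Delta_{\theta,h}(p)(x)=C\Bigl(\textstyle\prod_{k=1}^n\sqrt{(x-x_k)^2+h^2}\Bigr)\sin\bigl(\theta+S(x)\bigr),\qquad S(x):=\sum_{k=1}^n\psi(x-x_k),
\]
where $\psi(v):=\arg(v+ih)=\tfrac{\pi}{2}-\arctan\tfrac{v}{h}$ decreases strictly from $\pi$ to $0$. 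Since the radical never vanishes, the zeros of $\Delta_{\theta,h}(p)$ are exactly the solutions of $S(x)=m\pi-\theta$, and $S$ decreases strictly from $n\pi$ to $0$; thus there are $n$ zeros when $\theta\in(0,\pi)$ and $n-1$ when $\theta=0$, which is precisely why the mesh is defined only under the thresholds $n\ge2$, respectively $n\ge3$ (at least two zeros). Ordering the zeros decreasingly, consecutive zeros are characterised by the fact that $S$ drops by exactly $\pi$ across each gap, and $-S'(x)=\sum_{k=1}^n P_h(x-x_k)$ with the Poisson kernel $P_h(v):=\tfrac{h}{h^2+v^2}$.

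For the bound $\mesh\,\Delta_{\theta,h}(p)\ge\mesh\,p$ I would use a disjoint-mass argument. Put $d:=\mesh\,p$; the claim is $S(x)-S(x+d)\le\pi$ for every $x$. Indeed,
\[
S(x)-S(x+d)=\sum_{k=1}^n\int_{x-x_k}^{\,x+d-x_k}P_h(s)\,ds,
\]
and each summand is the mass of $P_h$ over an interval of length $d$ centred at $x+\tfrac{d}{2}-x_k$; as the $x_k$ are $d$-separated these centres are $d$-separated, so the intervals have pairwise disjoint interiors and the total mass is at most $\int_{\mathbb R}P_h=\pi$. Because across any gap $S$ must fall by exactly $\pi$, while it falls by at most $\pi$ over any interval of length $d$, every gap has length $\ge d$, which is the first inequality.

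The delicate inequality is $\mesh\,\Delta_{\theta,h}(p)\ge\mesh\,\Delta_{\theta,h}(x^n)=:d_0$; by the explicit roots~\eqref{cot.root.general}, $d_0$ is the smallest of the differences $h\bigl[\cot\tfrac{m\pi-\theta}{n}-\cot\tfrac{(m+1)\pi-\theta}{n}\bigr]$, attained at the gap straddling $\pi/2$. I would fix a gap $[b,a]$, so that $S(a)=m\pi-\theta$ and $S(b)=(m+1)\pi-\theta$, and minimise the objective $a-b$ over all admissible root vectors subject to these two winding constraints. After eliminating the two Lagrange multipliers, stationarity reduces to
\[
\frac{P_h(b-x_k)}{P_h(a-x_k)}=\frac{h^2+(a-x_k)^2}{h^2+(b-x_k)^2}=\frac{-S'(b)}{-S'(a)}\qquad(k=1,\dots,n),
\]
a ratio of quadratics set equal to a constant, which admits at most two roots $x$. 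Hence every stationary configuration places all roots at one or two points, and the problem collapses to the one-atom family (the monomial $(x-c)^n$) and the two-atom families.

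It remains to show the monomial is the global minimiser, i.e.\ that no two-atom configuration yields a gap shorter than $d_0$. For $n=2$ this is transparent and serves as the base case: computing the two roots of the quadratic $\Delta_{\theta,h}\bigl((x-x_1)(x-x_2)\bigr)$ gives the exact identity
\[
\bigl(\mesh\,\Delta_{\theta,h}(p)\bigr)^2=(x_2-x_1)^2+\Bigl(\tfrac{2h}{\sin\theta}\Bigr)^2=\bigl(\mesh\,p\bigr)^2+\bigl(\mesh\,\Delta_{\theta,h}(x^2)\bigr)^2,
\]
minimised exactly at $x_1=x_2$ and proving both inequalities at once in degree two. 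The main obstacle is the general two-atom estimate: one must show that splitting a point mass into two separated masses only lengthens the shortest gap. The naive route—differentiating a gap as the atoms separate and reading the velocity as a difference of Poisson-weighted averages of root positions—does not close, because $P_h$ is a Cauchy kernel and is \emph{not} log-concave, so the weighted average need not be monotone and the pointwise comparison $\sum_k\sin^2\psi_k\le n\sin^2\!\bigl(\tfrac1n\sum_k\psi_k\bigr)$ fails where $\sin^2$ is convex. I would therefore attack the two-atom family directly through the representation $g_m=h\int_{m\pi-\theta}^{(m+1)\pi-\theta}\bigl(\sum_k\sin^2\psi_k\bigr)^{-1}d\sigma$ for the $m$-th gap length (using $P_h(v)=\tfrac1h\sin^2\psi(v)$, with $\psi_k$ evaluated on the level set $\sum_k\psi_k=\sigma$), comparing its integrand against the monomial integrand $n\sin^2(\sigma/n)$ after the reduction; this convexity analysis in the separation parameter is where the bulk of the technical work lies. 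Finally, the $\theta=0$ statement follows from the $\theta\in(0,\pi)$ case by continuity together with the loss of one root (degree $n-1$), which is exactly why the threshold rises to $n\ge3$.
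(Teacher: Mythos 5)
Your argument splits into two halves, and only one of them closes. The first half is fine: the representation $\Delta_{\theta,h}(p)(x)=\Im\bigl(e^{i\theta}p(x+ih)\bigr)$, the phase function $S$, and the disjoint-interval bound $S(x)-S(x+d)\le\int_{\mathbb R}P_h=\pi$ for $d=\mesh\,p$ together give a correct and rather pretty proof that $\mesh\,\Delta_{\theta,h}(p)\ge\mesh\,p$ (your $n=2$ identity is also correct). But the second and harder inequality, $\mesh\,\Delta_{\theta,h}(p)\ge\mesh\,\Delta_{\theta,h}(x^n)$, is not proved. You reduce it by a Lagrange-multiplier computation to configurations supported on at most two atoms and then state explicitly that the required two-atom estimate ``is where the bulk of the technical work lies'' and that the natural monotonicity route fails because the Poisson kernel is not log-concave. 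That is a concession, not a proof. Moreover, the reduction itself is incomplete: the minimisation runs over a non-compact set of root configurations (and over the gap index $m$), so you must rule out the infimum being attained only in a limit where roots escape to infinity — which degenerates the problem to lower degree and would need a separate induction — before the stationarity equation tells you anything about a minimiser.

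For comparison, the paper sidesteps all of this with one algebraic identity: $\Delta_{\theta,h}(p)=\frac{1}{n!}\,p\boxplus\Delta_{\theta,h}(x^n)$, where $\boxplus$ is the Walsh (Takagi) convolution. The Grace apolarity theorem shows $p\boxplus q$ is hyperbolic when $p,q$ are; Fisk's lemma (a hyperbolicity preserver commuting with all real shifts does not decrease mesh) applied to $q\mapsto p\boxplus q$, together with the commutativity $p\boxplus q=q\boxplus p$, yields $\mesh(p\boxplus q)\ge\max(\mesh\,p,\mesh\,q)$ in one stroke, and both inequalities of the theorem follow at once with $q=\Delta_{\theta,h}(x^n)$. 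If you want to salvage your approach, the missing two-atom comparison is exactly what this convolution argument replaces; alternatively, your first-half argument could be kept and only the second inequality derived via $\boxplus$.
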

In other words, the operator $\Delta_{\theta, h}$ does not decrease the mesh, and the minimal mesh in the image 
of this operator on the set $\mathbb{R}^n[x]$ is $\mesh\,\Delta_{\theta, h}(x^n)$. That is, 
 if for a hyperbolic polynomial $q$ of degree $n$, its mesh is less than the mesh of the
polynomial $\Delta_{\theta, h}(x^n)$, then $q$ cannot be represented as $\Delta_{\theta,h}(p)$ for a polynomial~$p\in\mathcal{HP}$. Thus, the minimal mesh for the image of the operator $\Delta_{\theta,h}$ with 
$$
\theta=\dfrac{\pi}2\pm\dfrac{\psi}{2},\qquad\psi\in[0,\pi],
$$ 
can be easily calculated:
\begin{equation}\label{min.mesh}
\mesh\,\Delta_{\theta,h}(x^n)=h\cdot
\begin{cases}
&\ \dfrac{\sin\tfrac{\pi}{n}}{\cos\tfrac{\pi-\psi}{2n}\cdot\cos\tfrac{\pi+\psi}{n}}\ \ \,\qquad\qquad\text{if}\quad n\ \ \text{is even},\\
&\\
&\dfrac{\sin\tfrac{\pi}{n}}{\cos\tfrac{\psi}{2n}\cdot\cos\tfrac{2\pi-\psi}{2n}}
\ \ \,\quad\qquad\text{if}\quad n\ \ \text{is odd}.
\end{cases}
\end{equation}
Thus, for example, if the mesh of a hyperbolic polynomial $p$ of degree $4$ is less than $1$, then it cannot be represented as $q(z)=p(z+i)-p(z-i)$ for some $q\in\mathcal{HP}$. It would be interesting to investigate the image of the operator $\Delta_{\theta, h}$ defined on $\mathcal{HP}$ in detail. This topic is discussed in  Section~\ref{section:open.problems}.

We remind that one of the first results about the mesh of polynomials was established by M.Riesz in 1925 whose elementary proof was given by A.\,Stoyanoff in~\cite{Sto}. M. Riesz proved that the operator of differentiation on $\mathcal{HP}$ does not decrease the mesh (see Theorem~\ref{Theorem.Riesz} of the present work). Moreover, in \cite[p. 226, Lemma 8.25]{Fisk} S. Fisk established that any linear operator defined on $\mathcal{HP}$ does not decrease the mesh if it commutes with any shift operator with real step (see Theorem~\ref{Theorem.Fisk} for more details). This fact implies Theorem~\ref{th:mth4} partially, since the operator~\eqref{Delta.theta} obviously commutes with any shift operator, see Section~\ref{section:minimal.mesh}. At the same time, Theorem~\ref{th:mth4} not only shows that 
$\Delta_{\theta, h}$ does not decrease the mesh but also provides a sharp lower bound of the mesh of the image of the operator $\Delta_{\theta, h}$.

As a by-product of Theorem~\ref{th:mth1}, we establish analogues of the Hermite-Poulain theorem for polynomials with roots in strips and half-planes. 
\begin{theorem}\label{th:mth2}
	Let $\mathcal{S}$ be a closed strip in the complex plane $\mathbb{C}$ bounded by lines $L_{\varphi,c_1}$ and $L_{\varphi,c_2}$, $c_1\neq c_2$. The operator~\eqref{e3} preserves the set of polynomials with roots in $\mathcal{S}$ if and only if the following conditions hold
	\begin{itemize}
		\item[1)] $\arg h=\dfrac{\pi}{2}+\varphi$;
		\item[2)] $l = - m$; 
		\item[3)] All the zeros of the generating function~\eqref{g1}
		        lie on the unit circle $\mathbb{D}$.
	\end{itemize} 
    If additionally the width of the strip does not exceed $\dfrac{|h|}2$, the roots 
    of the polynomial  $T(p)$ are simple for any polynomial $p$ with roots in $\mathcal{S}$.
\end{theorem}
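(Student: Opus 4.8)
The plan is to normalize, then dispatch the three assertions—sufficiency, necessity, and the simplicity addendum—separately, reducing each to the one–factor operators that already appear in the factorization following Corollary~\ref{Corol.Hermite.Poulain}. After a rotation by $e^{-i\varphi}$ and a translation I may assume $\varphi=0$ and $\mathcal{S}=\{z:0\le\Im z\le D\}$, where $D=\Im(e^{-i\varphi}(c_2-c_1))>0$ is the width; in these coordinates condition~1) reads $\arg h=\pi/2$. For \emph{necessity} I would run a squeezing argument on the two boundary lines. Let $p$ be any real–rooted polynomial; its zeros lie on the lower edge $L_{0,0}\subset\mathcal{S}$, while the zeros of $p(\,\cdot\,-iD)$ lie on the upper edge $L_{0,iD}\subset\mathcal{S}$, so both polynomials lie in the class $P(\mathcal{S})$ of polynomials with all zeros in $\mathcal{S}$. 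Since $T$ preserves $P(\mathcal{S})$ and $T\big(p(\,\cdot\,-iD)\big)=\big(Tp\big)(\,\cdot\,-iD)$ (any shift commutes with $T$), every zero $\zeta$ of $Tp$ satisfies both $\Im\zeta\in[0,D]$ and $\Im\zeta+D\in[0,D]$, whence $\Im\zeta=0$. Thus $T$ preserves real–rootedness, and the characterization of single–line preservers contained in Theorem~\ref{th:mth1} (the case $\varphi_1=\varphi_2=0$, $c_1=c_2$) forces exactly $l=-m$, $\arg h=\pi/2$, and all zeros of $Q$ on $\mathbb{T}$; undoing the rotation yields conditions 1)--3).

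For \emph{sufficiency} I would exploit the factorization. Writing $T=Q(E_h)$ with the shift $E_a:p(x)\mapsto p(x-a)$ and factoring $Q(t)=a_m t^{-m}\prod_{j=1}^{2m}(t-\zeta_j)$ with $|\zeta_j|=1$, the identity $E_h-\zeta_j I=E_{h/2}\big(E_{h/2}-\zeta_j E_{-h/2}\big)$ produces $2m$ half–shifts whose product $E_{mh}$ cancels the factor $E_h^{-m}=E_{-mh}$ precisely because $l=-m$. Hence $T=a_m\prod_{j=1}^{2m}F_j$, where $F_j(q)(z)=q(z-i\eta)-\zeta_j q(z+i\eta)$ and $\eta=|h|/2$. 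Each $F_j$ preserves $\mathcal{S}$: at a zero $z_0$ of $F_j(q)$ one has $|q(z_0-i\eta)|=|q(z_0+i\eta)|$ because $|\zeta_j|=1$, and if $\Im z_0>D$ then every zero of $q$ lies strictly below $z_0$, so $|q(z_0+i\eta)|>|q(z_0-i\eta)|$, a contradiction; the case $\Im z_0<0$ is symmetric. Composing the commuting $F_j$ shows $T$ preserves $P(\mathcal{S})$.

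For the \emph{simplicity} claim, assume $D\le|h|/2=\eta$ and write $T(p)=F_{j_0}(q)$ with $q=a_m\prod_{j\neq j_0}F_j(p)$, which has all its zeros in $\mathcal{S}$ by the previous paragraph. First, no zero $z_0$ of $F_{j_0}(q)$ can have $q(z_0\pm i\eta)=0$: the relation $q(z_0-i\eta)=\zeta_{j_0}q(z_0+i\eta)$ would then force both $z_0\pm i\eta$ to be zeros of $q$, i.e. two points of $\mathcal{S}$ differing by $2\eta\ge 2D$, impossible in a strip of width $D$. Hence at a hypothetical double zero $z_0$ we may divide $F_{j_0}(q)(z_0)=F_{j_0}(q)'(z_0)=0$ to obtain $\tfrac{q'}{q}(z_0-i\eta)=\tfrac{q'}{q}(z_0+i\eta)$, that is $\sum_\nu\big[(z_0-\rho_\nu)^2+\eta^2\big]^{-1}=0$, where $\rho_\nu$ are the zeros of $q$. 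Writing $z_0-\rho_\nu=u_\nu+iv_\nu$ with $|v_\nu|\le D\le\eta$, the real part of each summand equals $\dfrac{u_\nu^2+\eta^2-v_\nu^2}{|(z_0-\rho_\nu)^2+\eta^2|^2}\ge0$, and is strictly positive unless $u_\nu=0$ and $|v_\nu|=\eta=D$; the latter alignment would put a zero of $q$ at $z_0\mp i\eta$, already excluded. Therefore the sum has positive real part and cannot vanish, so $T(p)=F_{j_0}(q)$ has only simple zeros.

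The main obstacle I expect lies in the sufficiency step: verifying that the residual translation $E_h^{-m}$ is annihilated exactly by the accumulated half–shifts—this is where $l=-m$ is indispensable—and handling the degenerate factors with $\zeta_j=1$, for which $F_j$ lowers the degree and one must check that the low–degree images cause no difficulty. The modulus and logarithmic–derivative estimates are then routine, the only delicate point being the boundary case $D=\eta$, which is disposed of by the separate observation that two zeros of $q$ cannot be $2\eta$ apart inside $\mathcal{S}$.
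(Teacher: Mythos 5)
Your proof is correct, and its necessity half takes a genuinely different---and slicker---route than the paper's. For necessity, the paper re-runs the $x^n\mapsto Q(e^{-hy})$ limiting argument of Theorem~\ref{th:mth1} to recover conditions 1) and 3), and then derives $l=-m$ separately by tracking, via Lemma~\ref{Lemma.step.1} and the factorization~\eqref{e7e}, where the two boundary lines $L_{0,\pm ir}$ are sent and imposing $\left|\pm r+\tfrac{(m+l)\beta}{2}\right|\leqslant r$. Your squeeze---feeding $T$ a polynomial with roots on the lower edge together with its translate to the upper edge and using that $T$ commutes with shifts---shows in one stroke that $T$ preserves real-rootedness, so all three conditions fall out of the already-proved single-line characterization (Theorem~\ref{th:mth1} plus Proposition~\ref{Propos.unit.circle} for $\arg h$); this buys you a shorter proof at the price of leaning on the full strength of the line theorem. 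For sufficiency and simplicity your argument is essentially the paper's in symmetrized coordinates: the paper uses one-sided factors $S_{i\beta}-e^{i\theta_k}I$, each translating the strip by $\beta/2$ with the net translation cancelling because $l=-m$, whereas your half-shift factors $F_j$ preserve the strip exactly; and your real-part identity $\sum_\nu\bigl[(z_0-\rho_\nu)^2+\eta^2\bigr]^{-1}=0$ is the paper's imaginary-part comparison of $\sum_j(\lambda-z_j-i\beta)^{-1}$ with $\sum_j(\lambda-z_j)^{-1}$ after centering. A small dividend of your version: the paper's simplicity proof assumes the roots of $p$ do not all lie on one line (deferring that case to Theorem~\ref{th:simplicity.poly}), while your exclusion of $\rho_\nu=z_0\mp i\eta$ handles the boundary case $D=\eta$ uniformly. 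The degenerate factors with $\zeta_j=1$ that you flag are harmless: the modulus argument is insensitive to the degree drop, and $F_j(q)\equiv0$ can occur only for constant $q$.
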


Note that the sufficiency of conditions $1)$--$3)$ in Theorem~\ref{th:mth2} for \textit{real} polynomials with roots in a strip $\mathcal{S}_r:=\{z\in\mathbb{C}\ :\ |\Im z|\leqslant r\}$  was proved by N.G.\,de Bruijn in \cite{Br}. In fact, he proved (not only for polynomials but for entire functions as well) that 
given a \textit{real} polynomial (or certain \textit{real} entire function) with roots in $\mathcal{S}_r$ , the roots of $T(p)$ lie in a more narrow strip
$\{z\in\mathbb{C}\ :\ |\Im z|\leqslant \sqrt{r^2-mh^2}\}$ if $r>h\sqrt{m}$, or are real if $r\leqslant h\sqrt{m}$ provided the conditions $1)$--$3)$ of Theorem~\ref{th:mth2} hold, see~\cite[Theorems~4~and~8]{Br}. A~similar result for the operator $\Delta_{\theta, h}$ defined in~\eqref{Delta.theta} was obtained in~\cite{Cardon} for entire functions by a technique different from the one used in~\cite{Br}. Moreover, in~\cite{BrandenChasse} there was a complete characterization of 
all linear operators which preserve certain spaces of entire functions whose zeros lie in a closed strip. However, in the present work, instead of applying the result of~\cite{BrandenChasse} we use another technique which is more close to the one used in the classical papers~\cite{Br,pol1}, and, from our point view, is more convenient to finite difference operators we consider. In addition, by our technique we are able to study the multiplicities of the roots of the functions~$T(p)$ and $\Delta_{\theta, h}(p)$.  We also pay attention of the reader to the work~\cite{CsorSmith},
where the authors considered a subclass of real entire functions~$f$ with zeros in a strip such that the function $\Delta_{\theta,ih}(f)$, $h<0$, has infinitely many pure imaginary zeros. 

It is clear that zero strip preservers are also zero half-plane preservers.
\begin{corol}\label{Corol:mth2.}
		Let $\mathcal{H}$ be a closed half-plane in the complex plane $\mathbb{C}$ bounded by a line $L_{\varphi,c}$. The operator~\eqref{e3} preserves the set of polynomials with roots in $\mathcal{H}$ if and only if the conditions $1)$--$3)$ of Theorem~\ref{th:mth2} hold.
\end{corol}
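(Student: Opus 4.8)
The plan is to establish the two implications separately by reduction to the results already proved for lines and strips: the ``if'' part reduces to the strip theorem (Theorem~\ref{th:mth2}), and the ``only if'' part I would reduce to the line theorem (Theorem~\ref{th:mth1}); essentially all the difficulty sits in the second implication.

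\emph{Sufficiency.} Suppose conditions 1)--3) hold. These conditions depend only on the common direction $\varphi$ of the bounding lines and not on the position of any particular strip, so Theorem~\ref{th:mth2} shows that $T$ preserves the set of polynomials with all roots in \emph{every} closed strip whose bounding lines have direction $\varphi$. Now take any polynomial $p$ with all roots in $\mathcal{H}$. Since $p$ has only finitely many roots, there is $W\geqslant0$ so that every root lies within distance $W$ of $\partial\mathcal{H}=L_{\varphi,c}$; hence all roots of $p$ lie in the closed strip $\mathcal{S}\subseteq\mathcal{H}$ bounded by $L_{\varphi,c}$ and by the parallel line inside $\mathcal{H}$ at distance $W$ from it. By Theorem~\ref{th:mth2} the roots of $T(p)$ lie in $\mathcal{S}\subseteq\mathcal{H}$, so $T$ preserves the class in question. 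This is precisely the content of the remark that strip preservers are half-plane preservers.

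\emph{Necessity.} Assume $T$ preserves the set of polynomials with roots in $\mathcal{H}$; after an affine change of variable I may take $\varphi=0$, $c=0$, so that $\partial\mathcal{H}=\mathbb{R}$ and $\mathcal{H}=\{\,\Im z\geqslant0\,\}$. The aim is to show that $T$ actually preserves the boundary line $\mathbb{R}$, i.e. maps real-rooted polynomials to real-rooted ones; once this is known, Theorem~\ref{th:mth1} applied with $L_{\varphi_1,c_1}=L_{\varphi_2,c_2}=\mathbb{R}$ yields conditions 1)--3) simultaneously. To approach this I would test $T$ on the polynomials $(x-a)^n$. Writing $y=x-a$ and $\zeta=y/h$, the roots of $T\big((x-a)^n\big)$ are $a+h\zeta_{j}^{(n)}$, where $\zeta_{j}^{(n)}$ are the roots of $S_n(\zeta)=\sum_{k=l}^m a_k(\zeta-k)^n$. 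Requiring $a+h\zeta_{j}^{(n)}\in\mathcal{H}$ for \emph{all} $a\in\mathcal{H}$ forces $h\zeta_{j}^{(n)}$ into the recession half-plane, that is $\Im\big(h\zeta_{j}^{(n)}\big)\geqslant0$ for every $n$ and $j$. A direct analysis of $S_n$ shows that, as $n\to\infty$, its roots cluster along the perpendicular bisector of the outermost nodes $l$ and $m$ and become dense on the whole vertical line $\{\,\Re\zeta=(l+m)/2\,\}$. Since a closed half-plane can contain an entire line only when that line is parallel to its boundary, the inequality $\Im(h\zeta)\geqslant0$ forces $h$ to be orthogonal to $\mathbb{R}$, i.e. $\arg h=\tfrac{\pi}{2}=\tfrac{\pi}{2}+\varphi$, which is condition 1).

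The hard part is to pass from condition 1) to conditions 2) and 3), i.e. to upgrade half-plane preservation to genuine line preservation. The obstruction is structural: a half-plane constrains the image of $T$ only on the side of its boundary, and nothing in the bare definition forbids $T$ from pushing every root strictly into the interior of $\mathcal{H}$. Indeed the recession-cone test above is satisfied as soon as the bisector $\{\Re\zeta=(l+m)/2\}$ sits on the correct side of the origin, which yields only an inequality for $l+m$, not the equality $l=-m$. The real work is therefore to show that preservation of $\mathcal{H}$ cannot be ``one-sided'' in this way: for any admissible $T$ with $l\neq-m$, or with a zero of $Q$ off the unit circle, one must exhibit a polynomial with roots on $\mathbb{R}$ (or accumulating against $\partial\mathcal{H}$) whose image under $T$ is driven across the boundary line. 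I expect producing such extremal test polynomials, and controlling the boundary behaviour of $T(p)$ sharply enough using the fine location of the roots of $S_n$ near the bisector together with the information on the zeros of the generating function~\eqref{g1}, to be the crux of the argument. Once line preservation is secured, Theorem~\ref{th:mth1} closes the proof.
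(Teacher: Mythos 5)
Your sufficiency argument is correct and coincides with the paper's own justification, which is nothing more than the remark preceding the corollary that zero strip preservers are zero half-plane preservers: since conditions 1)--3) of Theorem~\ref{th:mth2} depend only on the direction $\varphi$ and not on the position of the strip, $T$ preserves every closed strip parallel to $L_{\varphi,c}$, and the finitely many roots of any $p$ with roots in $\mathcal{H}$ sit inside one such strip contained in $\mathcal{H}$.

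The necessity half of your proposal, however, is not a proof, and the gap you flag cannot be closed. You reduce everything to the claim that a preserver of a single closed half-plane must preserve its boundary line, correctly observe that nothing prevents $T$ from pushing every root strictly into the interior of $\mathcal{H}$, and then defer the resolution of exactly that obstruction to unspecified ``extremal test polynomials.'' No such polynomials exist: with $\mathcal{H}=\{\Im z\geqslant 0\}$, the operator $T(p)(x)=p(x-i)-\tfrac12\,p(x)$ (so $l=0$, $m=1$, $h=i$, $Q(t)=t-\tfrac12$) sends any polynomial with roots $z_j\in\mathcal{H}$ to one with roots in $\{\Im z\geqslant\tfrac12\}\subset\mathcal{H}$, because $\left|\dfrac{z-z_j-i}{z-z_j}\right|>1$ for every $j$ whenever $\Im z<\tfrac12\leqslant\Im z_j+\tfrac12$, so the product of these factors cannot equal $\tfrac12$ below that level; yet this $T$ violates both condition 2) and condition 3). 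The pure shift $p(x)\mapsto p(x-i)$ is an even simpler example violating 2). Thus the ``only if'' direction fails for a single half-plane as literally stated; it becomes true (and immediate) only if one requires $T$ to preserve \emph{both} closed half-planes bounded by $L_{\varphi,c}$, for then polynomials with all roots on $L_{\varphi,c}$ are mapped to polynomials with all roots on $L_{\varphi,c}$ and Theorem~\ref{th:mth1} with $c_1=c_2$ yields conditions 1)--3) at once, with none of the asymptotic analysis of the roots of $\sum_k a_k(\zeta-k)^n$ that you sketch (that computation is correct but, as you yourself note, delivers only $\Re h=0$ and an inequality on $l+m$). The paper itself offers no argument for necessity here, so on this point your attempt and the source share the same defect, though you at least identify where the difficulty lies.
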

For the case when $T=\Delta_{\theta,h}$, the sufficiency statement of Corollary~\ref{Corol:mth2.} is a particular case of a result by L.\,Kuipers~\cite{Kuipers} (see also~\cite[Theorem~2.2.1]{Rahman_Scm}).

Finally, we notice once again that from the  aforementioned results by N.G.\,de Bruijn it follows that for any \textit{real} polynomial with roots in a strip $\mathcal{S}_b$, the polynomial $\Delta_{\theta, h}(p)$ has only real roots if the step $h$ of the finite difference operator is sufficiently large. However, as is easy to check, this is not true for complex polynomials with non-real roots non-symmetric w.r.t. the real line. In this case, the polynomial $\Delta_{\theta, h}(p)$ may have non-real roots for any finite $h>0$. This suggested us to find the asymptotics of the roots of $\Delta_{\theta, h}(p)$ as $h\to+\infty$ for arbitrary complex polynomial~$p$.
\begin{theorem}\label{th:mth6} 
Let
$$
p(z)=a_0z^n+a_1z^{n-1}+\cdots+a_{n-1}z+a_n,\qquad a_k\in\mathbb{C},\  a_0\neq0,
$$
be an arbitrary complex polynomial. The $k$-th root of the polynomial
$\Delta_{\theta, h}(p)$ satisfies the following asymptotic formula
\begin{equation*}\label{asympt.formula}
\mu_k(\theta,h)=h\cdot\lambda_{k,n}(\theta)-\dfrac{a_1}{n\,a_0}-\dfrac{Q_n''\left(\lambda_{k,n}(\theta)\right)}{n!\,Q_n'\left(\lambda_{k,n}(\theta)\right)}\cdot
\dfrac{p^{(n-2)}\left(-\tfrac{a_1}{n\,a_0}\right)}{a_0h}-\dfrac{Q_n'''\left(\lambda_{k,n}(\theta)\right)}{n!\,Q_n'\left(\lambda_{k,n}(\theta)\right)}\cdot
\dfrac{p^{(n-3)}\left(-\tfrac{a_1}{n\,a_0}\right)}{a_0h^2}\,+\,
O\left(\frac{1}{h^3}\right)
\end{equation*}
as $|h|\to\infty$, where the polynomial $Q_n(z)=\Delta_{\theta,1}(z^n)$, and $\lambda_{k,n}(\theta)$ are its roots defined in~\eqref{cot.root.general} and~\eqref{cot.root.0}.
\end{theorem}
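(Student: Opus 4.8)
The plan is to rescale the variable so that, after division by $h$, the roots of $\Delta_{\theta,h}(p)$ become small perturbations of the roots $\lambda_{k,n}(\theta)$ of $Q_n$, and then to expand these perturbed roots in powers of $1/h$. First I would substitute $z=hw$. Since $\Delta_{\theta,h}$ is linear and $\Delta_{\theta,h}(z^m)\big|_{z=hw}=h^mQ_m(w)$ with $Q_m(w)=\Delta_{\theta,1}(w^m)=\frac{e^{i\theta}(w+i)^m-e^{-i\theta}(w-i)^m}{2i}$, I obtain
\[
\Delta_{\theta,h}(p)(hw)=h^n\sum_{j=0}^n a_j h^{-j}Q_{n-j}(w).
\]
The structural identity that makes the computation tractable is $Q_{n-j}(w)=\frac{(n-j)!}{n!}Q_n^{(j)}(w)$, obtained by differentiating the closed form of $Q_n$ termwise (indeed $Q_n'=nQ_{n-1}$). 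Writing $\varepsilon=1/h$, the roots $\mu_k(\theta,h)$ of $\Delta_{\theta,h}(p)$ are recovered as $\mu_k=h\,w_k(\varepsilon)$ from the roots $w_k(\varepsilon)$ of
\[
F(w,\varepsilon):=\sum_{j=0}^n\frac{(n-j)!}{n!}a_j\varepsilon^jQ_n^{(j)}(w)=0,\qquad F(w,0)=a_0Q_n(w).
\]

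Next I would use the simplicity of the roots $\lambda_{k,n}(\theta)$ of $Q_n$, which is evident from the explicit expressions~\eqref{cot.root.general}--\eqref{cot.root.0} (the listed cotangent values are pairwise distinct and their number equals $\deg Q_n$), so that $Q_n'(\lambda_{k,n}(\theta))\ne0$. As $F$ is a polynomial in $(w,\varepsilon)$ and $\lambda_{k,n}(\theta)$ is a simple zero of $F(\cdot,0)$, the implicit function theorem yields a function $w_k(\varepsilon)$, analytic near $\varepsilon=0$, with $w_k(0)=\lambda_{k,n}(\theta)$ and $F(w_k(\varepsilon),\varepsilon)\equiv0$; in particular the expansion is a genuine power series, with no fractional powers. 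Hence $w_k(\varepsilon)=\lambda_{k,n}(\theta)+c_1\varepsilon+c_2\varepsilon^2+c_3\varepsilon^3+O(\varepsilon^4)$, and therefore
\[
\mu_k(\theta,h)=h\,w_k(1/h)=h\,\lambda_{k,n}(\theta)+c_1+\frac{c_2}{h}+\frac{c_3}{h^2}+O\!\left(\tfrac{1}{h^3}\right),
\]
so that everything reduces to computing $c_1,c_2,c_3$. For $\theta=0$ the polynomial $Q_n$ has degree $n-1$, so $k$ ranges over $1,\dots,n-1$; the argument is identical for each simple finite root.

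The coefficients are found by inserting the ansatz into $F(w_k(\varepsilon),\varepsilon)=0$, Taylor-expanding every $Q_n^{(j)}(\lambda_{k,n}(\theta)+v)$ with $v=c_1\varepsilon+c_2\varepsilon^2+\cdots$ about $\lambda_{k,n}(\theta)$, and matching powers of $\varepsilon$; abbreviating $Q^{(m)}:=Q_n^{(m)}(\lambda_{k,n}(\theta))$ and recalling $Q^{(0)}=0$, the order-$\varepsilon$ equation is $a_0Q^{(1)}c_1+\frac{a_1}{n}Q^{(1)}=0$, giving $c_1=-\frac{a_1}{na_0}$. The order-$\varepsilon^2$ equation is linear in $c_2$ and, after substituting $c_1$, collapses to $c_2=-\frac{Q^{(2)}}{a_0Q^{(1)}}\bigl(\frac{a_2}{n(n-1)}-\frac{a_1^2}{2n^2a_0}\bigr)$. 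A pleasant feature at order $\varepsilon^3$ is that all contributions carrying $Q^{(2)}$ assemble into the factor $a_0c_1+\frac{a_1}{n}$, which vanishes by the order-$\varepsilon$ relation; what remains is
\[
c_3=-\frac{Q^{(3)}}{a_0Q^{(1)}}\Bigl(\frac{a_0}{6}c_1^3+\frac{a_1}{2n}c_1^2+\frac{a_2}{n(n-1)}c_1+\frac{a_3}{n(n-1)(n-2)}\Bigr).
\]

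The last step is to recognize the parentheses as values of derivatives of $p$. Differentiating $p(z)=\sum_i a_iz^{n-i}$ and using $\frac{(n-i)!}{n!}=\frac{1}{n(n-1)\cdots(n-i+1)}$ gives
\[
\frac{1}{n!}p^{(n-j)}(z)=\sum_{i=0}^{j}\frac{a_i}{n(n-1)\cdots(n-i+1)}\,\frac{z^{j-i}}{(j-i)!},
\]
and evaluating at $z=c_1=-\frac{a_1}{na_0}$ reproduces exactly the parentheses found above for $j=2$ and $j=3$. Substituting $Q^{(m)}=Q_n^{(m)}(\lambda_{k,n}(\theta))$ then turns the expressions for $c_1,c_2,c_3$ into the three explicit terms of the claimed formula, the remainder $O(1/h^3)$ coming from the $O(\varepsilon^4)$ tail of $w_k$. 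The main labor is the order-$\varepsilon^2$ and order-$\varepsilon^3$ bookkeeping together with this recognition step; the genuinely enabling observation, however, is the derivative identity $Q_{n-j}=\frac{(n-j)!}{n!}Q_n^{(j)}$, which lets the entire perturbation series be written through $Q_n$ and its derivatives at a single simple root, so that the combinatorics of the Faà~di~Bruno expansion align with the Taylor coefficients of $p$ at its centroid $-\frac{a_1}{na_0}$.
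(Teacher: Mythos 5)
Your proposal is correct and follows essentially the same route as the paper: rescale $z=hw$, use the identity $Q_{n-j}=\tfrac{(n-j)!}{n!}Q_n^{(j)}$ to write the rescaled equation through derivatives of $Q_n$, perturb about the simple roots $\lambda_{k,n}(\theta)$ in powers of $1/h$, and recognize the resulting coefficient combinations as $p^{(n-2)}$ and $p^{(n-3)}$ evaluated at $-a_1/(na_0)$. The only difference is cosmetic: you invoke the implicit function theorem to justify the analytic expansion of $w_k(\varepsilon)$, whereas the paper localizes the root via Hurwitz's theorem and divides by $g_n(x)=Q_n(x)/(x-\lambda_{k,n})$ before iterating; both yield the same computation.
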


For example, for the polynomial $p(x)=\displaystyle\prod_{k=1}^{12}(x-k)\prod_{k=1}^{32}(x+ki)$ of degree $44$, the roots of the polynomial $\Delta_{0,h}p$ (classical central finite difference) with
$h=10\,e^{\tfrac{\pi}3i}$ and $h=50\,e^{\tfrac{\pi}3i}$ are depicted
on Figures~\ref{fig:pic_1} and~\ref{fig:pic_2}, respectively.

\begin{figure}[h]
\begin{multicols}{2}
\hfill
\includegraphics[scale=0.4]{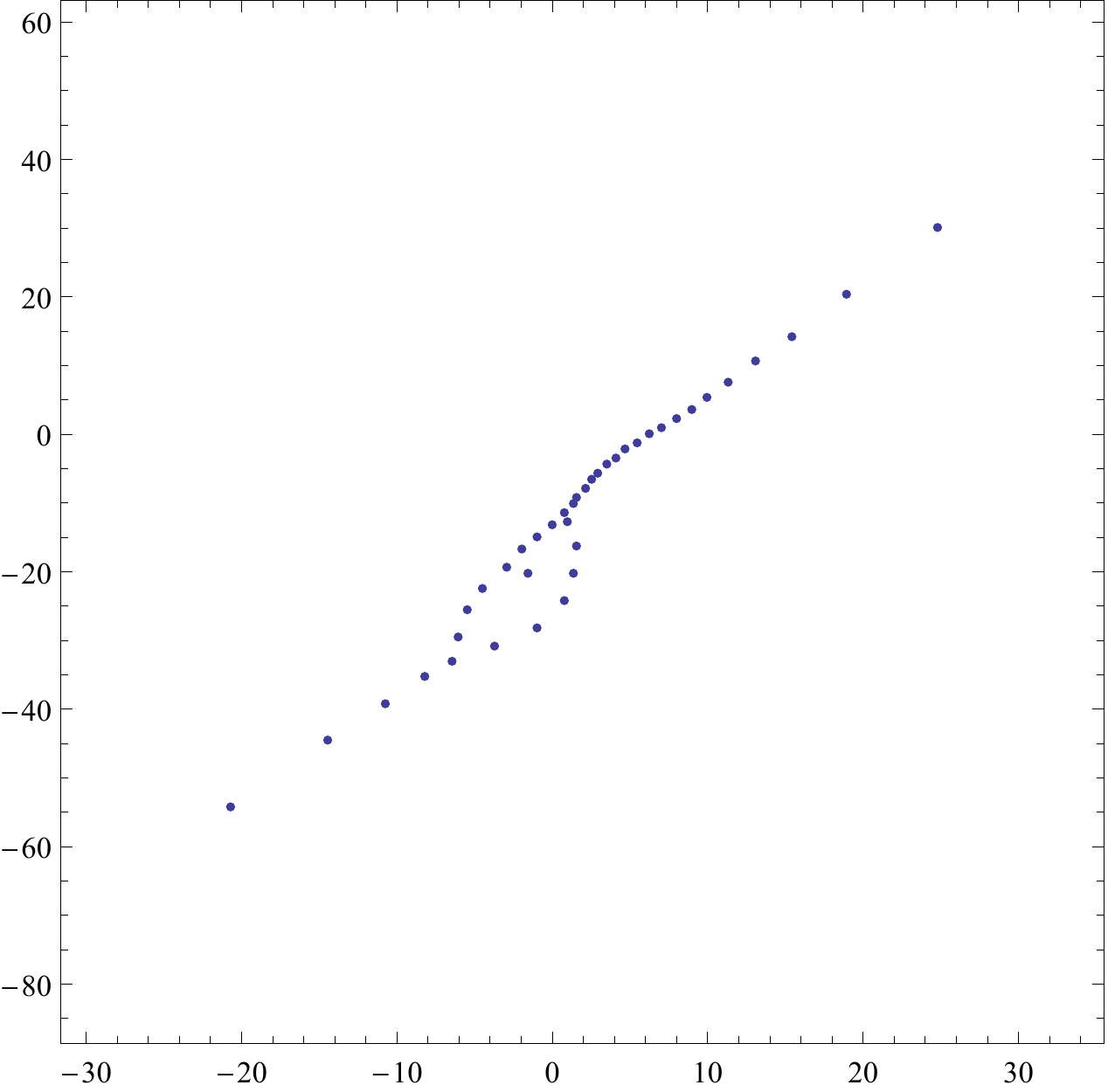}
\hfill\hfill
\caption{$h=10\,e^{\tfrac{\pi}3i}$}
\label{fig:pic_1}
\hfill
\includegraphics[scale=0.4]{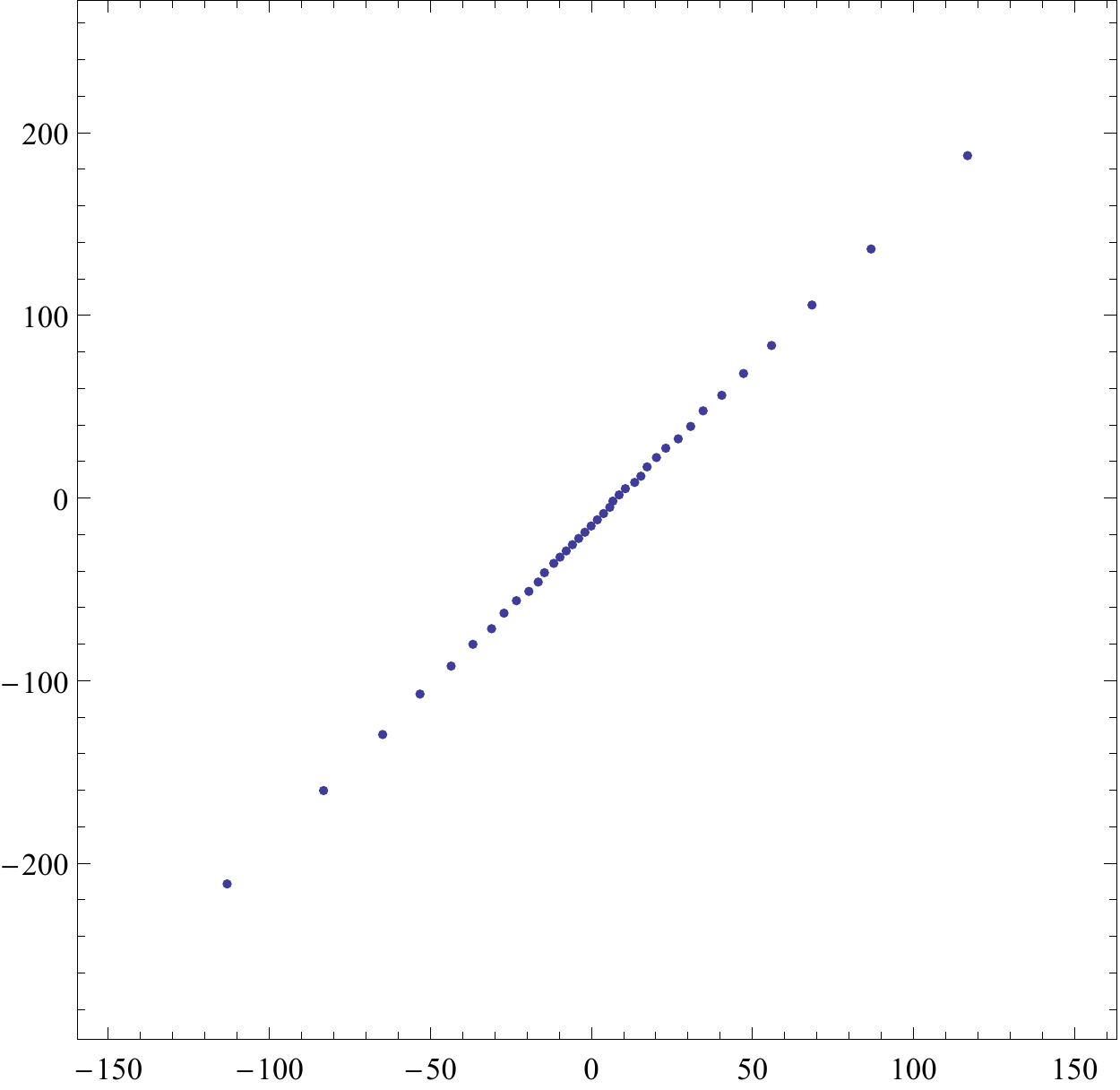}
\hfill\hfill
\caption{$h=50\,e^{\tfrac{\pi}3i}$}
\label{fig:pic_2}
\end{multicols}
\end{figure}

From the formula~\eqref{asympt.formula}, it follows that for sufficiently large step $h$ the roots of $\Delta_{\theta, h}(p)$ tend to a line parallel to $h$. This means that for large $h$, $\Delta_{\theta, h}$ is a zero strip decreasing operator. Calculations show that this operator decreases the strip of roots location for any $h$. However, we do not have a strict proof of this fact, see Section~\ref{section:open.problems} for details.

What is known in this sense is that for pure imaginary step ($h\in\mathbb{R}$), the operator $\Delta_{\theta,h}$ is a CZD (complex zero decreasing) operator (see~\cite{CsorCrav}), that is, the number
of non-real roots of $\Delta_{\theta,h}(p)$ does not exceed the number of non-real roots of $p$. As was noted in~\cite[p.~200]{Br}, this simple fact is a special case of the Hermite-Poulain theorem mentioned above.

The operator $\Delta_{\theta,h}$  also preserves the class of the so-called self-interlacing polynomials (see Theorem~\ref{Theorem.SI}). We remind that a polynomial $p(z)$ is called self-interlacing if all its roots are real, simple, and interlacing with the roots of $p(-z)$.

\vspace{2mm}

The rest of the paper is organized as follows. In Section~\ref{section:polynomials.line} we study properties of linear finite difference operators defined on the set of polynomials with roots on a straight line and prove Theorem~\ref{th:mth1}. In Section~\ref{section:entire_functions} we consider a specific set of entire functions with roots on a line and establish Theorem~\ref{th:mth3} and its generalization Theorem~\ref{th:simplicity.ent.func}. 
Sections~\ref{section:polynomials.line} and~\ref{section:entire_functions} are illustrated by some examples from combinatorics and the functions theory, see Proposition~\ref{Corollary.zeroes. of.finite.differences} and Theorem~\ref{Theorem.reciprocal.gamma}. Section~\ref{section:minimal.mesh} is devoted to mesh of polynomials with roots on a line. Here we prove formul\ae~\eqref{cot.root.general}--\eqref{cot.root.0} and~\eqref{min.mesh} as well as Theorems \ref{th:mth5} and~\ref{th:mth4}. In Section~\ref{section:polynomials.strip} we deal with polynomials with roots in a strip or in a half-plain and prove Theorem~\ref{th:mth2}. The asymptotics of the roots of $\Delta_{\theta, h}(p)$ as $|h|\to\infty$ is studied in Section~\ref{section:asymptotics} where we prove Theorem~\ref{th:mth6}.  Finally, in Section~\ref{section:open.problems} we discuss open question related to the considered theory of linear finite differences operators.

\setcounter{equation}{0}
\section{Hermit-Poulain theorem for polynomials with roots on a straight line}\label{section:polynomials.line}

In this section, we prove Theorem~\ref{th:mth1} and Corollary~\ref{Corol.Hermite.Poulain}. We also prove that
if the operator~\eqref{e3} satisfies the conditions of Theorem~\ref{th:mth1}, then all polynomials in its image have only simple roots.

Consider two straight lines in the complex plane
\begin{equation*}
L_{\varphi_j,c_j}=\{ae^{i\varphi_j}+c,\ a\in\mathbb{R}\},\quad j=1,2,
\end{equation*} 
$c_j\in\mathbb{C}$, $\varphi_j\in(0,\pi)$, and consider the operator~\eqref{e3} having the form
\begin{equation}\label{e3.double}
T(p) (x) = \sum_{k=l}^m a_k p(x-kh),\qquad a_ma_l\neq0.
\end{equation}
First we prove the following fact.
\begin{prepos}\label{Propos.parallel}
	Suppose that for every polynomial $p$ with roots on the line $L_{\varphi_1,c_1}$, all the roots of $T(p)$ lie on the line $L_{\varphi_2,c_2}$, then $\varphi_2=\varphi_1$.
\end{prepos}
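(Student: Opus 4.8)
The plan is to probe $T$ with a one-parameter family of test polynomials whose single (multiple) root slides along $L_{\varphi_1,c_1}$, and to watch how the corresponding image root moves. Concretely, for a point $z_0\in L_{\varphi_1,c_1}$ I would apply $T$ to $p_{z_0}(x)=(x-z_0)^n$, which is a legitimate polynomial with all its roots on $L_{\varphi_1,c_1}$. A direct computation gives
\[
T(p_{z_0})(x)=\sum_{k=l}^m a_k\,(x-kh-z_0)^n=R_n(x-z_0),\qquad R_n(u):=\sum_{k=l}^m a_k\,(u-kh)^n .
\]
The decisive observation is that $R_n$ is \emph{independent} of $z_0$: translating the root of the test polynomial merely translates the whole image, so the roots of $T(p_{z_0})$ are exactly the points $z_0+u$ with $R_n(u)=0$.

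Next I would guarantee that $R_n$ actually has a root for a suitable choice of $n$; otherwise $T(p_{z_0})$ is a nonzero constant and the hypothesis says nothing. Expanding in powers of $u$, the identity $R_n\equiv\mathrm{const}$ forces $\sum_{k=l}^m k^j a_k=0$ for $j=0,1,\dots,n-1$. Choosing $n\geqslant m-l+1$, the equations with $j=0,\dots,m-l$ form a Vandermonde system in the distinct nodes $l,l+1,\dots,m$, whose only solution is $a_l=\cdots=a_m=0$, contradicting $a_la_m\neq0$. Hence for such $n$ the polynomial $R_n$ is non-constant and possesses a root $u_0\in\mathbb{C}$.

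Fixing this $u_0$, for \emph{every} $z_0\in L_{\varphi_1,c_1}$ the number $z_0+u_0$ is a root of $T(p_{z_0})$, so by assumption $z_0+u_0\in L_{\varphi_2,c_2}$. As $z_0$ ranges over the entire line $L_{\varphi_1,c_1}$, the point $z_0+u_0$ sweeps out the translated line $L_{\varphi_1,c_1}+u_0$, whose direction vector is $e^{i\varphi_1}$. This whole line lies inside $L_{\varphi_2,c_2}$, and a line contained in a line must coincide with it; in particular the two lines are parallel, i.e. $e^{i\varphi_1}=\pm e^{i\varphi_2}$. Since $\varphi_1,\varphi_2\in[0,\pi)$ this yields $\varphi_1=\varphi_2$, as claimed.

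The argument is short, and its only genuinely delicate point is the middle step: one must ensure the test image has a root, so that the hypothesis actually carries information. The Vandermonde non-vanishing (equivalently, just taking $n$ large) disposes of this. The concluding geometric step — an entire line sitting inside another line forces equal directions — is where the parallelism $\varphi_1=\varphi_2$ is extracted, and it is what makes the choice of a family of \emph{translated} test polynomials the right idea.
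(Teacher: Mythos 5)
Your proof is correct and follows essentially the same route as the paper's: both exploit the shift-equivariance $T(p(\cdot-a))=T(p)(\cdot-a)$ to slide a root of the image along a translate of $L_{\varphi_1,c_1}$, which must then lie inside $L_{\varphi_2,c_2}$. The only difference is cosmetic — you use the explicit family $(x-z_0)^n$ and justify via a Vandermonde argument that the image is non-constant, a degeneracy the paper dismisses as obvious.
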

\begin{proof}
Without loss of generality one can assume that $L_{\varphi_1,c_1}$ is the real line, $\varphi_1=0$ and $c_1=0$, since the general case can be obtain from the case $L_{\varphi_1,c_1}=\mathbb{R}$ by changing of variables. Obviously, the operator $T$ does not map all hyperbolic polynomials into polynomials of degree 0.

Suppose that $p$ is a hyperbolic polynomial such that $T(p)(z_0) = 0$, so
$z_0\in  L_{\varphi_2,c_2}$ by assumption of the proposition. For $a \in\mathbb{R}$, define the polynomial 
$$
p_a(z):=p(z-a),
$$ 
which is hyperbolic as well, so we have $T(p_a)(z) = T(p)(z -a)$, and $T(p_a)(z_0+a)=0$ by assumption. Consequently, for every $a\in\mathbb{R}$, we obtain $z_0 +a\in L_{\varphi_2,c_2}$,  that is possible only if the line $L_{\varphi_2,c_2}$ is parallel to the real line, that is, if $\varphi_2=0$, as required.
\end{proof}

Now we are in a position to describe the root location of the generating function $Q$ defined in~\eqref{g1}.

\begin{prepos}\label{Propos.unit.circle}
	Let the operator $T$ defined in~\eqref{e3.double} sends every polynomial with roots on the line $L_{\varphi,c_1}$ to a polynomial with roots on the line $L_{\varphi,c_2}$. Then all zeros of its generating function $Q$ lie on the unit circle, and the step $h$ is orthogonal to the lines $L_{\varphi,c_1}$ and $L_{\varphi,c_2}$, that is, $\arg h=\varphi\pm\dfrac{\pi}{2}$.
\end{prepos}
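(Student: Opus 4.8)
The plan is to reduce the whole statement to one asymptotic computation for the zeros of $T$ applied to a high power of a linear factor. First I would conjugate the picture by the affine map $z\mapsto e^{i\varphi}z+c_1$, which carries $L_{\varphi,c_1}$ to the real axis, leaves the coefficients $a_k$ of $T$ unchanged, replaces the step $h$ by $e^{-i\varphi}h$, and turns $L_{\varphi,c_2}$ into the horizontal line $\{\Im z=\delta\}$ for some real $\delta$ (by Proposition~\ref{Propos.parallel} the two lines are already known to be parallel). Thus I may assume $\varphi=0$, $L_{\varphi,c_1}=\mathbb{R}$, $L_{\varphi,c_2}=\{\Im z=\delta\}$, and I may assume $m>l$, the case $m=l$ being a mere shift. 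Now I feed in the test polynomials $p(z)=z^{n}$, whose only zero $0$ lies on $\mathbb{R}$; the hypothesis then says that for every $n$ the polynomial
$$
P_n(u):=T(z^n)(u)=\sum_{k=l}^{m}a_k(u-kh)^n
$$
has all of its zeros on the fixed line $\Im u=\delta$.

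The heart of the proof is to locate the zeros of $P_n$ of large modulus. For $|u|\geq cn$ one has $(u-kh)^n=u^n\exp\!\bigl(n\log(1-kh/u)\bigr)=u^n e^{-nkh/u}\bigl(1+O(1/n)\bigr)$ uniformly in $k\in\{l,\dots,m\}$, because the quadratic term $n(kh/u)^2$ is $O(1/n)$ there. Hence
$$
P_n(u)=u^n\Bigl(Q\bigl(e^{-nh/u}\bigr)+O(1/n)\Bigr),
$$
so on the scale $|u|\asymp n$ the zeros of $P_n$ are governed by the symbol $Q(e^{-nh/u})$. Writing the zeros of $Q$ as $t_1,\dots,t_{m-l}$ (all nonzero, since $a_l\neq0$) and setting $\Lambda_{j,s}:=\log t_j+2\pi i s$ for $s\in\mathbb{Z}$, the equation $e^{-nh/u}=t_j$ has the solutions $u_{j,s}=-\,nh/\Lambda_{j,s}$. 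I would then invoke Rouch\'e's theorem (equivalently, Hurwitz's theorem applied to $P_n(nw)/(nw)^n$) on small disks around each $u_{j,s}$ with $|u_{j,s}|\asymp n$ to conclude that $P_n$ has a genuine zero within $o(n)$ of each such $u_{j,s}$.

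Finally I read off the two conclusions from the location of these zeros. Since $|u_{j,s}|=n|h|\big/\sqrt{(\log|t_j|)^2+(\arg t_j+2\pi s)^2}\to\infty$, while every zero of $P_n$ must stay on the fixed horizontal line $\Im u=\delta$, the direction of $u_{j,s}$ is forced to become horizontal, i.e.\ $\arg u_{j,s}\to 0\pmod\pi$. From $\arg u_{j,s}=\arg(-h)-\arg\!\bigl(\log|t_j|+i(\arg t_j+2\pi s)\bigr)$ and letting $s\to\pm\infty$ (so that the second argument tends to $\pm\pi/2$) one gets $\arg h\equiv \pi/2\pmod\pi$, which in the original coordinates is exactly $\arg h=\varphi\pm\pi/2$. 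Substituting $\arg h=\pi/2$ back, the horizontality condition becomes $\arg\bigl(\log|t_j|+i(\arg t_j+2\pi s)\bigr)\equiv\pi/2\pmod\pi$, which holds only when the real part vanishes, i.e.\ $\log|t_j|=0$; hence $|t_j|=1$ for every $j$, so all zeros of $Q$ lie on the unit circle. The one delicate point, and the step I expect to require the most care, is the Rouch\'e/Hurwitz argument guaranteeing that the large-modulus zeros of $P_n$ genuinely track the points $u_{j,s}$ uniformly as $n\to\infty$ (including the bookkeeping for multiple or unimodular zeros of $Q$ and for the error absorbed into the $O(1/n)$); once that is in place, the contradiction with $\Im u_{j,s}$ being unbounded whenever $\arg h\neq\varphi\pm\pi/2$ or $|t_j|\neq1$ is immediate.
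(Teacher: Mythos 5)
Your proposal is correct and follows essentially the same route as the paper: both test $T$ on the monomials $z^n$, pass (after the rescaling $u\leftrightarrow n/y$) to the limit symbol $Q(e^{-hy})$, and deduce from the forced location of the points $-nh/(\log|t_j|+i(\arg t_j+2\pi s))$, $s\in\mathbb{Z}$, that $\Re\left(e^{-i\varphi}h\right)=0$ and $|t_j|=1$. The ``delicate point'' you flag is handled in the paper by applying Hurwitz's theorem to $F_n(y)=R_n(n/y)$ together with the observation that the circles carrying the zeros of $F_n$ converge to the real axis, which is the same content as your Rouch\'e step.
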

\begin{proof}
First, suppose that	$L_{\varphi,c_1}=\mathbb{R}$, that is, $\varphi=0$ and $c_1=0$.

Thus, if $p$ is  a hyperbolic polynomial, then by assumption the roots of $T(p)$ lie on the line 
\begin{equation}\label{line.b}
L_{0,c_2}=\{z\in\mathbb{C}\ :\ \Im z=\Im c_2\},
\end{equation}
parallel to the real axis.

Consider the polynomial $P_n (x) =x^n$,   $n\in \mathbb{N}$.  For $P_n(x)$ we have
\begin{equation}\label{f2}
T(P_n) (x) = \sum\limits_{k=l}^m a_k (x-k h)^n = x^n 
\sum_{k=l}^m a_k \left(1-\dfrac{k h}{x}\right)^n =:
x^n R_n(x).
\end{equation} 
By assumption, all the zeros of the rational function $R_n$ lie on the line~\eqref{line.b}. 
Then the zeros of the polynomial 
$$
F_n(y):=R_n \left(\dfrac{n}{y}\right)
$$
lie on the circle
\begin{equation*}
C_n=\left\{z\in\mathbb{C}\ :\ \left|z+i\,\dfrac{n}{2\Im c_2}\right|=\left|\dfrac{n}{2\Im c_2}\right|\right\}.
\end{equation*}

As $n\to \infty$, the sequence of polynomials $\{F_n(y)\}_{n=1}^{\infty}$ converges 
uniformly on compact sets to the following entire function 
$$
f(y) := \sum\limits_{k=l}^m 
	a_k e^{-j h y} = Q (e^{- h y}).
$$
Each zero of the function $f$ is the accumulation point of a sequence of zeros of $F_n$. It is clear that  if a sequence $\{ z_k\}_{k\in \mathbb{N}}$ has a limit $y_0$ and  
if	$z_k \in C_k $, $\forall k\in \mathbb{N}$,  then $y_0$  is real. 
Consequently, all the zeros of the function $f$ are real. 

Suppose now that $z_0 \in \mathbb{C}\setminus \{0\}$ is a zero of the generating function $Q$ defined in~\eqref{g1}, and let 
$$
h=\alpha+i \beta,\quad \alpha, \beta \in \mathbb{R}.
$$
Solving the equation
$$
e^{-h y} = z_0
$$
for real $y$, we obtain
$$
y_k = - \frac{\log |z_0| +
	i\arg z_0 + 2\pi k i }{\alpha + i \beta}, \qquad   k\in \mathbb{Z}
$$
that implies
$$
\Im y_k =\dfrac{\beta \log |z_0| - \alpha \arg
	z_0 - 2\pi k \alpha }{\alpha^2 +  \beta^2}=0,
$$
for any $k\in \mathbb{Z}$, that is possible only if $\alpha=\Re h=0$ and
$|z_0|=1$.

Thus, we proved that if the operator $T$ sends a hyperbolic polynomial to a polynomial with roots on a line $L_{0,c_2}$ parallel to the real axis, then the roots of its generating function $Q$ lie on the unit circle, and the step $h$ is pure imaginary, that is, orthogonal to the line $L_{0,c_2}$ and to the real axis.

For the general case, when $\varphi\in[0,\pi)$ and $c_1$ is an arbitrary complex number, the assertion of the proposition follows from the previous result by changing variables.
\end{proof}

To prove the formula for the step in Condition $2)$ of Theorem~\ref{th:mth1}, let us consider the shift operator
\begin{equation}\label{shift.operator}
S_\lambda(p)(x) :=p(x-\lambda),\qquad\lambda\in\mathbb{C}.
\end{equation}
acting on $\mathbb{C}[z]$.

If the operator $T$ defined in~\eqref{e3.double} sends every polynomial with roots on a line $L_{\varphi_1,c_1}$ to a polynomial with roots on a line $L_{\varphi_2,c_2}$, then by Propositions~\ref{Propos.parallel}--\ref{Propos.unit.circle}, $\varphi_1=\varphi_2=:\varphi$, $\arg h=\varphi\pm\dfrac{\pi}2$, and all the roots of the generating function $Q$ lie on the unit circle, that is,
\begin{equation*}
Q(t) = \sum\limits_{k=l}^m a_k t^{k} = t^l \prod\limits_{j=1}^{m-l}(t- e^{i\theta_j}),\qquad\theta_j\in[0,2\pi),\quad j=1,\ldots,m-l.
\end{equation*}
This implies that the operator $T$ can be represented in terms of shift operators as follows
\begin{equation}\label{e7} 
T=  S_{h}^l \prod_{k=1}^{m-l}(S_{h}- e^{i\theta_k}I)
\end{equation}
with $\arg h=\varphi\pm\dfrac{\pi}2$.

Our proof of Condition $2)$ of Theorem~\ref{th:mth1} is based on the 
following simple fact.

\begin{lemma}\label{Lemma.step.1}
Let $T=S_{h}-e^{i\theta}\, I$ with $\arg h=\dfrac{\pi}2+\varphi$ for some $\varphi\in[0,\pi)$, and $\theta\in[0,2\pi)$.
If all the roots of a polynomial $p$ lie on a straight line $L_{\varphi,c_1}$ for some $c_1\in\mathbb{C}$, 
then all the roots of the polynomial~ $T(p)$ lie on the line $L_{\varphi,c_2}$ with
\begin{equation}\label{Lemma.step.1.step.formula}
c_2=c_1+\dfrac{h}2.
\end{equation}
\end{lemma}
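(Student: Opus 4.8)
The plan is to reduce to the case where the line $L_{\varphi,c_1}$ is the real axis and then exploit a factor-by-factor comparison of moduli. First I would apply the change of variables $z\mapsto w=e^{-i\varphi}(z-c_1)$, which carries $L_{\varphi,c_1}$ onto $\mathbb{R}$ and conjugates the shift $S_h$ into the shift by the rotated step $h'=e^{-i\varphi}h$; since $\arg h=\tfrac{\pi}2+\varphi$, we get $\arg h'=\tfrac{\pi}2$, so $h'=i\eta$ is purely imaginary with $\eta=|h|>0$. It therefore suffices to prove the following: if $p$ has only real roots, then every root of $p(x-i\eta)-e^{i\theta}p(x)$ has imaginary part exactly $\eta/2$. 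Undoing the substitution, the value $\eta/2$ corresponds precisely to $c_2=c_1+h/2$, which is the asserted formula~\eqref{Lemma.step.1.step.formula}.

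Write $p(x)=C\prod_{j=1}^{n}(x-r_j)$ with $r_j\in\mathbb{R}$. I would first verify that no root of $T(p)$ can coincide with a zero of $p$: at $x=r_k\in\mathbb{R}$ one has $T(p)(r_k)=p(r_k-i\eta)$, and every factor $(r_k-r_j)-i\eta$ has nonzero imaginary part, so $p(r_k-i\eta)\neq0$. Hence at any root $x$ of $T(p)$ we have $p(x)\neq0$, and the equation $T(p)(x)=0$ may be rewritten as $p(x-i\eta)/p(x)=e^{i\theta}$, which forces the modulus condition $|p(x-i\eta)|=|p(x)|$, i.e. $\prod_j|x-i\eta-r_j|=\prod_j|x-r_j|$.

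The crux of the argument is a monotonicity observation applied to each factor. Writing $x=u+iv$ and $d_j=(u-r_j)^2\geqslant0$, the $j$-th ratio of squared moduli is $\dfrac{d_j+(v-\eta)^2}{d_j+v^2}$, which exceeds, equals, or falls below $1$ according as $(v-\eta)^2$ exceeds, equals, or falls below $v^2$, that is, according as $v<\eta/2$, $v=\eta/2$, or $v>\eta/2$; crucially this comparison has the \emph{same} direction for every $j$, independently of $r_j$. Consequently the full product is $>1$ when $v<\eta/2$ and $<1$ when $v>\eta/2$, so the equality $|p(x-i\eta)|=|p(x)|$ can hold \emph{only} when $v=\eta/2$. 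This places every root of $T(p)$ on the line $\{\Im x=\eta/2\}$, and translating back yields $L_{\varphi,c_2}$ with $c_2=c_1+h/2$. I expect the only genuine bookkeeping to be the reduction step (tracking how $\arg h$ transforms under the rotation, so that $h'$ comes out purely imaginary with positive imaginary part); the modulus comparison itself is elementary and is exactly the reason the midline $c_2=c_1+h/2$ appears.
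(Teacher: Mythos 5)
Your proof is correct and follows essentially the same route as the paper's: both reduce (explicitly or via the rotation $e^{-i\varphi}$ inside the product) to comparing $|p(x-i\eta)|$ with $|p(x)|$ factor by factor, observing that each ratio is uniformly $<1$ above the midline $\Im x=\eta/2$ and uniformly $>1$ below it, which pins every root of $T(p)$ to that line. The only cosmetic difference is that you perform the change of variables up front and separately rule out common zeros of $T(p)$ and $p$, whereas the paper writes the same modulus inequalities directly in the rotated coordinates.
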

\begin{proof}
Let $p\in\mathbb{C}[z]$ be a polynomial with all roots on a line $L_{\varphi,c_1}$. Then it can be represented as follows
\begin{equation}\label{poly.roots.on.line}
p(z)=a_0\prod\limits_{j=1}^n (z-d_je^{i\varphi} -c_1)
\end{equation}
where $a_0\neq 0$, and $d_j\in\mathbb{R}$, $j=1,\ldots,n$. If $z_0\in\mathbb{C}$ is a zero of the polynomial
$(S_{h}- e^{i\theta}I)(p)$, then one has
\begin{equation*}
(S_{h}- e^{i\theta}I)(p)(z_0)=0 \ \Longleftrightarrow \  
\prod_{j=1}^n\dfrac{e^{-i\varphi}z_0-d_j -e^{-i\varphi}c_1-i\,|h|}{e^{-i\varphi}z_0-d_j -e^{-i\varphi}c_1} = e^{i\theta}.
\end{equation*}

It is easy to see that for any $j=1, 2, \ldots , n$,  the following inequalities hold
\begin{equation*}
\left |\dfrac{e^{-i\varphi}z-d_j -e^{-i\varphi}c_1-i\,|h|}{e^{-i\varphi}z-d_j -e^{-i\varphi}c_1}\right | <1 \quad \text{whenever} \quad \Im\left(e^{-i\varphi}z\right)>\Im\left(e^{-i\varphi}c_1\right)+ \dfrac{|h|}2,
\end{equation*}
and 
\begin{equation*}
\left |\dfrac{e^{-i\varphi}z-d_j -e^{-i\varphi}c_1-i\,|h|}{e^{-i\varphi}z-d_j -e^{-i\varphi}c_1}\right | >1 \quad \text{whenever} \quad \Im\left(e^{-i\varphi}z\right)<\Im\left(e^{-i\varphi}c_1\right)+ \dfrac{|h|}2.
\end{equation*}
Consequently, all the roots of the polynomial $(S_{h}- e^{i\theta}I)(p)$ lie on the  
line 
\begin{equation}\label{Lemma.step.1.proof.1}
\left\{ z\in\mathbb{C}\ : \  \Im\left(e^{-i\varphi}z\right)=\Im\left(e^{-i\varphi}c_1\right)+ \dfrac{|h|}2 \right\}
\end{equation}
provided all the zeros  of $p$ lie on the line $L_{\varphi,c_1}$. The line~\eqref{Lemma.step.1.proof.1}
is exactly the line $L_{\varphi,c_2}$ with $c_2$ given by the formula~\eqref{Lemma.step.1.step.formula}, as required.
\end{proof}

From~\eqref{Lemma.step.1.step.formula} and~\eqref{Lemma.step.1.proof.1}, it follows that if we fix the numbers $c_1,c_2\in\mathbb{C}$ such that the operator $S_{h}-e^{i\theta}\, I$, $\theta\in[0,2\pi)$, sends any polynomial with roots on the line $L_{\varphi,c_1}$ to a polynomial with roots on the line $L_{\varphi,c_2}$ for some $\varphi\in[0,\pi)$, then
\begin{equation}
h=2\Im(e^{-i\varphi}(c_2-c_1)).
\end{equation}
In particular, $S_{h}-e^{i\theta}\, I$ preserves the set of polynomials with roots on the line $L_{\varphi,c_1}$ if and only if $h=0$. 

Thus, we obtain that the linear finite difference operator $T$ defined in~\eqref{e3.double} (see also~\eqref{e7}) with the step $h=i|h|e^{i\varphi}$ sends polynomials with roots on the line $L_{\varphi,c_1}$ to polynomials with roots on the line $L_{\varphi,c_2}$ where
\begin{equation*}
c_2=c_1+\dfrac{m-l}2\,|h|+l\,|h|=c_1+\dfrac{m+l}2\,|h|.
\end{equation*}
This means that if $c_1\neq c_2$ in the statement of Theorem~\ref{th:mth1}, then the formula in Condition $2)$ of Theorem~\ref{th:mth1} holds. At the same time, if $c_1=c_2$, that is, if the operator $T$ preserves the set of polynomials with roots on the line $L_{\varphi,c_1}$, then $m=-l$ and $h$ is an arbitrary complex number satisfying the condition $\arg h=\varphi\pm\dfrac{\pi}2$.

So the necessity of Conditions $1)$--$3)$ of Theorem~\ref{th:mth1} and Corollary~\ref{Corol.Hermite.Poulain} are proved, and we remind to the reader that Condition $4)$ of Corollary~\ref{Corol.Hermite.Poulain}  is necessary to provide the reality of the coefficients of $T(p)$.

The sufficiency of the Conditions $1)$--$3)$ of Theorem~\ref{th:mth1} and Conditions $1)$--$4)$ of Corollary~\ref{Corol.Hermite.Poulain} can be easily verified using Theorem~\ref{Theorem.de_Bruijn} and
the formula~\eqref{e7} with the corresponding change variables.

\vspace{2mm}

Let us finish this section with studying the multiplicities of the roots of polynomials $T(p)$. 

\begin{theorem}\label{th:simplicity.poly}
Under the conditions of Theorem~\ref{th:mth1}, the roots of $T(p)$ are all of multiplicity one.
\end{theorem}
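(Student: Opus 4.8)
The plan is to reduce everything to a single elementary factor $S_h-e^{i\theta}I$ and then exploit the strict monotonicity of a sum of argument functions. By the change of variables used throughout this section I may assume $\varphi=0$, so that the input line is $\mathbb{R}$ and the step is pure imaginary, $h=i|h|$. Under the hypotheses of Theorem~\ref{th:mth1} the generating function $Q$ has all its zeros on $\mathbb{T}$, so by~\eqref{e7} the operator factors as $T=S_{h}^{l}\prod_{k=1}^{m-l}(S_{h}-e^{i\theta_k}I)$ with $\theta_k\in[0,2\pi)$. The shift $S_h^l$ is a bijection of $\mathbb{C}[z]$ that preserves multiplicities, and by Lemma~\ref{Lemma.step.1} each factor $S_h-e^{i\theta_k}I$ sends a polynomial with roots on a horizontal line to one with roots on the parallel line shifted by $h/2$. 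Hence it suffices to show that a single factor $S_h-e^{i\theta}I$ turns any polynomial with roots on a horizontal line into one with only \emph{simple} roots. Applying this to the last factor applied (whose input already has all roots on a line, by repeated use of Lemma~\ref{Lemma.step.1}) then finishes the argument, since $S_h^l$ only translates.

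For the key step I would write $q(x)=a_0\prod_{j=1}^{n}(x-d_j)$ with $d_j\in\mathbb{R}$ (a harmless vertical translation lets me take the input line to be $\mathbb{R}$). By Lemma~\ref{Lemma.step.1} every root of $(S_h-e^{i\theta}I)(q)$ lies on $\{\Im z=|h|/2\}$, so I set $z=t+i|h|/2$ with $t\in\mathbb{R}$, and for $q(z)\neq0$ rewrite the equation $(S_h-e^{i\theta}I)(q)(z)=0$ as
$$
\prod_{j=1}^{n}\frac{(t-d_j)-i|h|/2}{(t-d_j)+i|h|/2}=e^{i\theta}.
$$
Each factor has modulus one and equals $e^{-2i\psi_j(t)}$, where $\psi_j(t):=\arg\bigl((t-d_j)+i|h|/2\bigr)\in(0,\pi)$, so the equation reduces to $e^{-2i\Psi(t)}=e^{i\theta}$ with $\Psi(t):=\sum_{j=1}^{n}\psi_j(t)$.

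The crucial observation is that each $\psi_j$ is strictly decreasing on $\mathbb{R}$, since $\psi_j'(t)=-\dfrac{|h|/2}{(t-d_j)^2+(|h|/2)^2}<0$; therefore $\Psi$ is a strictly decreasing continuous bijection of $\mathbb{R}$ onto $(0,n\pi)$. Consequently every level set $\{t:\Psi(t)=\mathrm{const}\}$ is a single point, and the solutions of $e^{-2i\Psi(t)}=e^{i\theta}$ are exactly the $t$ with $\Psi(t)=-\tfrac{\theta}{2}-\pi k$, $k\in\mathbb{Z}$, which are pairwise distinct. Counting the admissible $k$ (those whose level lies in $(0,n\pi)$) gives exactly $n$ values when $e^{i\theta}\neq1$ and exactly $n-1$ values when $\theta=0$; in both cases this matches $\deg(S_h-e^{i\theta}I)(q)$, because the leading coefficients $a_0$ and $e^{i\theta}a_0$ cancel precisely when $e^{i\theta}=1$. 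Thus $(S_h-e^{i\theta}I)(q)$ has exactly as many distinct roots as its degree, so every root is simple.

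The main obstacle I expect is this final bookkeeping: one must verify that the number of distinct roots produced by the argument equation equals the degree of the image \emph{exactly}, which forces care about the drop in degree when $\theta=0$ (equivalently $e^{i\theta}=1$). The remaining points are routine: the reduction to a horizontal line via the change of variables of this section, the fact that $S_h^l$ preserves multiplicities, the observation that $Q$ has at least one zero on $\mathbb{T}$ so that $T$ really does have a factor $S_h-e^{i\theta}I$, and the remark that the roots of $q$ themselves cannot be roots of the image, since they lie on $\mathbb{R}$ while the image has all its roots on $\{\Im z=|h|/2\}$, which justifies the rational manipulation above.
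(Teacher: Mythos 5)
Your proof is correct, but the core of your argument takes a genuinely different route from the paper's. Both proofs reduce the problem, via the factorization~\eqref{e7} together with Lemma~\ref{Lemma.step.1}, to showing that a single factor $S_h-e^{i\theta}I$ applied to a polynomial with all roots on a line produces only simple roots. At that point the paper argues by contradiction: a double root $\lambda$ of $p(z-h)-e^{i\theta}p(z)$ would force $p'(\lambda-h)/p(\lambda-h)=p'(\lambda)/p(\lambda)$, and comparing imaginary parts of the two logarithmic-derivative sums gives a strictly positive quantity equal to a strictly negative one. You instead never differentiate the image: you parametrize the image line, reduce the root equation to the phase equation $e^{-2i\Psi(t)}=e^{i\theta}$ with $\Psi=\sum_j\psi_j$ a strictly decreasing continuous bijection of $\mathbb{R}$ onto $(0,n\pi)$, count the distinct solutions, and match that count against the degree of $(S_h-e^{i\theta}I)(q)$, including the degree drop when $e^{i\theta}=1$. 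The two arguments rest on the same underlying positivity --- your $\Psi'(t)$ is, up to sign and a constant factor, exactly the sum of imaginary parts the paper exhibits --- but they are packaged differently. Yours yields slightly more (the exact number of roots and their implicit ordering as solutions of $\Psi(t)=-\theta/2-\pi k$), at the price of the degree bookkeeping you correctly identify as the delicate point; the paper's contradiction needs no counting and, being purely local, is the version that transfers to the entire-function setting of Theorem~\ref{th:mth3}, where a root-counting argument would not apply. The degenerate case $m=l$, in which $T$ is a scalar multiple of a shift and has no factor $S_h-e^{i\theta}I$, is silently excluded in the paper's proof just as you flag it explicitly; your treatment is consistent with theirs on this point.
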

\begin{proof}
Due to the formula~\eqref{e7}, it is enough to prove that if $p(z)$ has all its roots on a line $L_{\varphi,c_1}$, $c_1\in\mathbb{C}$, $\varphi\in[0,\pi)$, and $h=ire^{i\varphi}$, $r>0$, then the roots of the polynomial
\begin{equation}\label{th:simplicity.poly.proof.1}
f(z):=p(z-h)-e^{i\theta}p(z)
\end{equation}
are simple for any $\theta\in[0,2\pi)$. We prove this by contradiction.

The polynomial $p$ has the form~\eqref{poly.roots.on.line}. Let $\lambda$ be a root of $f(z)$ such that
\begin{equation}\label{th:simplicity.poly.proof.2}
f(\lambda)=f'(\lambda)=0.
\end{equation}
By Lemma~\ref{Lemma.step.1}, $\lambda$ has the form
\begin{equation*}
\lambda=\alpha e^{i\varphi}+c_1+ie^{i\varphi}\cdot\dfrac{r}2
\end{equation*}
for some $\alpha\in\mathbb{R}$.

From~\eqref{th:simplicity.poly.proof.1}--\eqref{th:simplicity.poly.proof.2} it follows that
$$
\dfrac{p'(\lambda-h)}{p(\lambda-h)}=\dfrac{p'(\lambda)}{p(\lambda)},
$$
that after simple calculations gives us
\begin{equation}\label{th:simplicity.poly.proof.3}
\sum\limits_{j=1}^n\dfrac{1}{\alpha-d_j-i\cdot\frac{r}{2}}=\sum\limits_{j=1}^n\dfrac{1}{\alpha-d_j+i\cdot\frac{r}{2}}.
\end{equation}
But this identity is impossible for any $\alpha\in\mathbb{R}$, since the imaginary part of the left-hand side of~\eqref{th:simplicity.poly.proof.3} has the form
$$
\dfrac{r}2\sum\limits_{j=1}^n\dfrac{1}{(\alpha-d_j)^2+\frac{r^2}{4}},
$$
while the imaginary part of the right-hand side of~\eqref{th:simplicity.poly.proof.3} is 
$$
-\dfrac{r}2\sum\limits_{j=1}^n\dfrac{1}{(\alpha-d_j)^2+\frac{r^2}{4}},
$$
a contradiction.
\end{proof}

We finish this section with a curious example. Let us denote by $\Delta$ the forward difference operator with the step $1$:
\begin{equation*}
\Delta(p)(z)=p(z+1)-p(z),
\end{equation*}
where $p\in\mathbb{C}[z]$. It is easy to see that
\begin{equation}\label{fin.dif.n.th.2}
\Delta^m(p)(z)=\sum_{k=0}^{m}(-1)^k\binom{m}{k}p(z+m-k)=\sum_{k=0}^{m}(-1)^{m-k}\binom{m}{k}p(z+k).
\end{equation}

Consider the monomial
$$
p(z)=z^n,\quad n\in\mathbb{N},
$$
and describe the root location of the polynomial $\Delta^m(z^n)$ of degree $n-m$. By~\eqref{fin.dif.n.th.2}, one has
\begin{equation}\label{Delta.z^n}
\Delta^mz^n=\sum\limits_{k=0}^m(-1)^{m-k}\begin{pmatrix}m\\k\end{pmatrix}(z+k)^n, \quad 1\leqslant m\leqslant n-1.
\end{equation}
Now from Lemma~\ref{Lemma.step.1} ut is easy to get the following fact.
\begin{prepos}\label{Corollary.zeroes. of.finite.differences}
	All the roots of the polynomial $\Delta^mz^n$ of degree $n-m$, $1\leqslant m\leqslant n-1$, are simple and located on the line $\Re z=-\dfrac{m}2$.
\end{prepos}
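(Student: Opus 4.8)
The plan is to realize the forward difference operator $\Delta^m$ as an operator of the form~\eqref{e3} covered by Theorem~\ref{th:mth1}, with the crucial observation that the relevant line is not the real axis but the \emph{imaginary} axis. Writing $\Delta=S_{-1}-I$ in terms of the shift operator~\eqref{shift.operator}, we have a single factor $S_h-e^{i\theta}I$ with $h=-1$ and $\theta=0$. Since $\arg(-1)=\pi=\tfrac\pi2+\tfrac\pi2$, this fits the hypotheses of Lemma~\ref{Lemma.step.1} with $\varphi=\tfrac\pi2$, so the operative line is the vertical line $L_{\pi/2,c}=\{\,z:\Re z=\Re c\,\}$. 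The monomial $p(z)=z^n$ has all its roots at the origin, which lies on $L_{\pi/2,0}$ (the imaginary axis); this is the point that makes the real-step operator $\Delta$ fall within the perpendicular-step framework of the previous results.

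First I would locate the roots. By Lemma~\ref{Lemma.step.1}, applied with $\varphi=\tfrac\pi2$ and $|h|=1$, a single application of $\Delta$ sends any polynomial whose roots lie on the vertical line $\Re z=\Re c$ to a polynomial whose roots lie on $\Re z=\Re c-\tfrac12$. Starting from $z^n$ with roots on $\Re z=0$ and iterating $m$ times (each $\Delta^{j}z^n$ being a polynomial of degree $n-j\ge1$ with roots on $\Re z=-\tfrac j2$), I obtain by induction on $m$ that every root of $\Delta^mz^n$ lies on the line $\Re z=-\tfrac m2$, as claimed.

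For simplicity, I would invoke Theorem~\ref{th:simplicity.poly}. It suffices to check that $\Delta^m$, written via~\eqref{Delta.z^n} as $T(p)(z)=\sum_{k=0}^m(-1)^{m-k}\binom mk p(z-kh)$ with $h=-1$, satisfies the conditions of Theorem~\ref{th:mth1} with $\varphi=\tfrac\pi2$, $c_1=0$, $c_2=-\tfrac m2$. Indeed the generating function is $Q(t)=\sum_{k=0}^m(-1)^{m-k}\binom mk t^k=(t-1)^m$, whose zeros all lie on $\mathbb T$ (condition 3), while $l=0$ and top index $m$ give $(l+m)h=-m$, which matches $2\Im\!\big(e^{-i\pi/2}(c_2-c_1)\big)\cdot e^{i\pi}=-m$ (condition 2); condition 1 holds since both lines are of the form $L_{\pi/2,\cdot}$. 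Hence Theorem~\ref{th:simplicity.poly} yields that all roots of $\Delta^mz^n$ are simple. Alternatively, one may apply the simple-root conclusion of Theorem~\ref{th:simplicity.poly} to the single factor $\Delta$ acting on $\Delta^{m-1}z^n$ (a polynomial of degree $n-m+1\ge2$ whose roots already lie on a vertical line).

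The step I expect to require the most care is the initial identification of the correct line: because $z^n$ is a real polynomial with all roots equal to $0$, it is tempting to take the real axis as $L_{\varphi,c}$, but then the real step $h=-1$ would not be perpendicular to it and Lemma~\ref{Lemma.step.1} would not apply. Recognizing that $0$ lies on the imaginary axis and that $h=-1$ is perpendicular to that vertical line is what brings the forward difference into the scope of the theory developed above; once this is in place, the rest is a routine induction together with the bookkeeping of conditions $1)$--$3)$ of Theorem~\ref{th:mth1}.
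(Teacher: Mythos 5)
Your proposal is correct and follows essentially the same route as the paper: the paper also obtains the root location by applying Lemma~\ref{Lemma.step.1} with the vertical line through the origin (the key observation that the real step $h=-1$ is perpendicular to the imaginary axis, on which all roots of $z^n$ lie) and gets simplicity from Theorem~\ref{th:simplicity.poly}. Your verification of conditions $1)$--$3)$ of Theorem~\ref{th:mth1} and the induction on $m$ are accurate bookkeeping of what the paper leaves implicit.
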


Note additionally that
\begin{equation}\label{Stirling.II}
\stirling{n}{m}=\dfrac1{m!}\,\Delta^mz^n\Bigg|_{z=0}=\dfrac1{m!}\sum\limits_{k=0}^m(-1)^{m-k}\binom{m}{k}k^n, \quad 1\leqslant m\leqslant n-1,
\end{equation}
where $\stirling{n}{m}$ are the Stirling numbers of the second kind, see, e.g.,~\cite{Stanley}. By change of variables, one can get a sequence of polynomials  with roots on the critical line.
\begin{corol}
The roots of the polynomials
\begin{equation}\label{p_nm}
S_{nm}(z)=\dfrac1{m!}\sum\limits_{k=0}^m(-1)^k\begin{pmatrix}m\\k\end{pmatrix}(mz-k)^n, \quad 1\leqslant m\leqslant n-1,
\end{equation}
of degree $n-m$ are simple, symmetric w.r.t. real axis, and located on the line $\Re z=\dfrac12$.
\end{corol}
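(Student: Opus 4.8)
The plan is to deduce this corollary from Proposition~\ref{Corollary.zeroes. of.finite.differences} through a single affine change of variable, observing that $S_{nm}$ is, up to a nonzero constant, the polynomial $\Delta^m z^n$ precomposed with the substitution $z\mapsto-mz$.

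First I would record the identity linking the two families. Starting from the expansion~\eqref{Delta.z^n} and replacing $z$ by $-mz$, then using $(k-mz)^n=(-1)^n(mz-k)^n$ together with $(-1)^{m-k}=(-1)^m(-1)^k$, one collects the result into an overall sign and the factor $m!$, arriving at
$$S_{nm}(z)=\frac{(-1)^{m+n}}{m!}\,\bigl(\Delta^m z^n\bigr)\big|_{z\mapsto-mz}.$$
This is the only computation in the argument; it is pure bookkeeping of signs and binomial coefficients, and the precise value of the constant is irrelevant since it does not affect the zero set.

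With this identity the three assertions are immediate. Because $z\mapsto-mz$ is invertible (here $m\geqslant1$), the number $z_0$ is a root of $S_{nm}$ of a given multiplicity exactly when $w_0:=-mz_0$ is a root of $\Delta^m z^n$ of the same multiplicity, as an affine change of variable preserves multiplicities. By Proposition~\ref{Corollary.zeroes. of.finite.differences} every root $w_0$ of $\Delta^m z^n$ is simple and lies on $\Re w_0=-\tfrac m2$, so every root $z_0=-w_0/m$ of $S_{nm}$ is simple and satisfies $\Re z_0=-\tfrac1m\Re w_0=\tfrac12$, which is the stated critical line. Finally, the coefficients of $S_{nm}$ are real, so by the conjugate root theorem its non-real zeros occur in complex-conjugate pairs, which is precisely the claimed symmetry with respect to the real axis.

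I do not anticipate any genuine difficulty: once the change-of-variable identity is in place, Proposition~\ref{Corollary.zeroes. of.finite.differences} supplies everything. The only step demanding a little care is the sign bookkeeping in the identity, in particular the factor $(-1)^n$ coming from $(k-mz)^n$ and the rewriting $(-1)^{m-k}=(-1)^m(-1)^k$; but this affects only an overall nonzero multiplicative constant and hence none of the three conclusions.
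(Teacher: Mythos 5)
Your proposal is correct and follows essentially the same route as the paper, which obtains the corollary from Proposition~\ref{Corollary.zeroes. of.finite.differences} precisely ``by change of variables.'' The sign bookkeeping in your identity $S_{nm}(z)=\frac{(-1)^{m+n}}{m!}\bigl(\Delta^m z^n\bigr)\big|_{z\mapsto-mz}$ checks out, and the transfer of simplicity, the line $\Re z=\tfrac12$, and the conjugate-symmetry via real coefficients are all sound.
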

From~\eqref{Delta.z^n}--\eqref{p_nm} we have
\begin{equation*}
\displaystyle\stirling{n}{m}=S_{nm}(1)=(-1)^{n-m}S_{nm}(0).
\end{equation*}

Note that the polynomials $\Delta^mz^n$ have the form
\begin{equation*}
\Delta^mz^n=m!\sum_{k=0}^{n-m}\binom{n}{k}\displaystyle\stirling{n-k}{m}z^k,
\end{equation*}
and satisfy the following recurrence relations
\begin{equation*}
\Delta^mz^n=(z+m)\Delta^mz^{n-1}+m\Delta^{m-1}z^{n-1}
\end{equation*}

Moreover, the zeros $\lambda_k$, $k=1,\ldots,n-1$ of the polynomial
\begin{equation*}
\Delta z^n=\sum_{k=0}^{n-1}\binom{n}{j}z^j
\end{equation*}
can be found explicitly (see~\eqref{cot.root.0} and Lemma~\ref{Lemma.roots.Delta.x^n}):
\begin{equation*}
\lambda_k=-\dfrac12-\dfrac{i}2\cot\dfrac{\pi k}n,\qquad k=1,\ldots,n-1,
\end{equation*}
so these formul\ae\ agree with Proposition~\ref{Corollary.zeroes. of.finite.differences}.

\setcounter{equation}{0}
\section{Entire functions with roots on a line}\label{section:entire_functions}

In the rest of the paper, we consider the operator $\Delta_{\theta,h}$ defined in~\eqref{Delta.theta}:
\begin{equation}\label{Delta.theta.double}
\Delta_{\theta,h}f(x)=\dfrac{e^{i\theta} f(x+ih)-e^{-i\theta} f(x-ih)}{2i},
\end{equation}
with $\theta\in[0,2\pi)$ and $h\in\mathbb{C}\setminus\{0\}$.

This section is devoted to the image of the operator $\Delta_{\theta, h}$ on a special set of entire functions to be described below. At first, let us consider the Laguerre-P\'olya class $\mathcal{LP}$ of real entire functions, see Definition~\ref{th:d2}, and prove Theorem~\ref{th:mth3}. Note that if $f \in \mathcal{LP}$, then by Theorem~\ref{Theorem.de_Bruijn}, all the zeros of $\Delta_{\theta, h}(f)$ are real. This also can be proved directly by the limiting considerations,
since $f$ is the uniform limit, on compact subsets of
$ \mathbb{C}$, of polynomials with only real zeros~\cite{PS}, and since $\Delta_{\theta, h}:
\mathcal{HP} \to \mathcal{HP}$ by Corollary~\ref{Corol.Hermite.Poulain}. However, the technique  use in the proof
of Theorem~\ref{th:mth1} does not allow us to extend it to entire functions, so we cannot assert that  the necessity
of the conditions of Corollary~\ref{Corol.Hermite.Poulain} for entire functions. Also we cannot use the limits to extend the result of Theorem~\ref{th:simplicity.poly} to entire functions, but we can prove this fact directly as in Theorem~\ref{th:simplicity.poly}.

\begin{proof}[Proof of Theorem~\ref{th:mth3}]
Without loss of generality, one can suppose that $h>0$. Let $f \in \mathcal{LP}$, $f \not\equiv 0$. As we mentioned above, the reality of zeros of $\Delta_{\theta, h}(f)$  follows from Theorem~\ref{Theorem.de_Bruijn}, so it suffices to establish the simplicity of these zeros.

Suppose that $x_0 \in \mathbb{R}$ is a multiple root of $g(z) := \Delta_{\theta, h}(f)(z)$. Then $g(x_0)=0$ and $g' (x_0)=0$, or,
equivalently,
\begin{equation*}
e^{i\theta} f(x_0 +ih) = e^{-i\theta} f(x_0 -ih),\quad   e^{i\theta} f^{\prime}(x_0 +ih) = e^{-i\theta} f^{\prime}(x_0 -ih),
\end{equation*}
so 
\begin{equation*}
\frac{f^{\prime}(x_0 +i h)}{f(x_0 +i h)} = \frac{f^{\prime}(x_0 -i h)}{f(x_0 -i h)}.
\end{equation*}
Since the function $f$ can be represented as in~\eqref{e2},  we have
\begin{equation*}
\frac{f^{\prime}(z)}{f(z)} = \frac{n}{z} -2az +b + \sum_{k=1}^\infty \frac{z}{x_k(z-x_k)},
\end{equation*}
hence we obtain
\begin{equation*}
  \frac{n}{x_0 +i h} -2a(x_0 +i h) +b + \sum_{k=1}^\infty \frac{x_0 +i h}{x_k(x_0 +i h-x_k)}=
\frac{n}{x_0 -i h} -2a(x_0 -i h) +b + \sum_{k=1}^\infty \frac{x_0 -i h}{x_k(x_0 -i h-x_k)}, 
\end{equation*}
or
\begin{equation}\label{Proof.1.simplicity.ent.func}
\frac{n}{x_0 +i h} -2aih + \sum_{k=1}^\infty \frac{x_0 +i h}{x_k(x_0 +i h-x_k)}=
\frac{n}{x_0 -i h} +2aih+ \sum_{k=1}^\infty \frac{x_0 -i h}{x_k(x_0 -i h-x_k)}.
\end{equation}

Comparing the imaginary parts of the left-hand side and the right-hand side of~\eqref{Proof.1.simplicity.ent.func},
one gets
\begin{equation*}
\frac{-nh}{x_0^2 +h^2}- 2ah - \sum_{k=1}^\infty \frac{x_k h}{x_k((x_0 -x_k)^2 + h^2)}=
\frac{nh}{x_0^2 +h^2} + 2ah + \sum_{k=1}^\infty \frac{x_k h}{x_k((x_0 -x_k)^2 + h^2)}
\end{equation*}
that implies
\begin{equation*}
\frac{n}{x_0^2 +h^2} + 2a + \sum_{k=1}^\infty \frac{1 }{(x_0 -x_k)^2 + h^2} = 0, 
\end{equation*}
since $h\neq 0$.

By~\eqref{e2}, $n\geqslant 0$, $a\geqslant 0$, and $(x_0-x_k)^2 \geqslant 0$, $k\in\mathbb{N}$. Therefore,  $f$ must be a constant function, so $\Delta_{\theta, h}(f)$ is a constant function, as well, so it cannot have multiple roots, a contradiction.
\end{proof}

Consider now a straight line $L_{\varphi,c}$ for some $\varphi\in[0,\pi)$, $c\in\mathbb{C}$, and introduce the following
class of entire functions.

\begin{definition}\label{Def.LP.line} 
A real entire function $f$ is said to be in the {\it extended
Laguerre-P\'olya class}, denoted as~$\mathcal{LP}(L_{\varphi,c})$, if
it is the limit, on compact subsets of the complex plane, of a sequence of 
polynomials with roots on the line $L_{\varphi,c}$.
\end{definition}

It is easy to see that if $f(z)\in\mathcal{LP}(L_{\varphi,c})$, then 
\begin{equation*}
g(x):=f\left(e^{i\varphi}x+c\right)\in\mathcal{LP},
\end{equation*}
so from Theorem~\ref{th:mth3} and from the formula~\eqref{e7} it is easy to obtain the following fact.

\begin{theorem}\label{th:simplicity.ent.func}
Let the function $f$ be in the class $\mathcal{LP}(L_{\varphi,c})$. If the linear finite difference operator $T$ defined in~\eqref{e3} satisfies Conditions $1)$--$3)$ of Corollary~\ref{Corol.Hermite.Poulain}, then all the zeros of $T(f)$ lie on the same line~$L_{\varphi,c}$ and are of multiplicity one.
\end{theorem}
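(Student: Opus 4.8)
The plan is to reduce to the real line and then exploit the composition structure~\eqref{e7} of $T$, applying Theorem~\ref{th:mth3} to the \emph{last} factor while handling the preceding factors by a limiting argument. First I would perform the change of variables indicated just before the statement: setting $g(x):=f(e^{i\varphi}x+c)$, the hypothesis $f\in\mathcal{LP}(L_{\varphi,c})$ becomes $g\in\mathcal{LP}$, and $T$ is conjugated to an operator $\widetilde T$ of the same form~\eqref{e3} whose step is pure imaginary (forced by Conditions $1)$--$3)$). Since these conditions mean $l=-m$ and that all zeros of $Q$ lie on $\mathbb{T}$, formula~\eqref{e7} lets me write
$$\widetilde T = S_{h}^{-m}\prod_{k=1}^{2m}\bigl(S_{h}-e^{i\theta_k}I\bigr),\qquad h=ir,\ r>0.$$
A short centering computation shows that each factor $S_{h}-e^{i\theta_k}I$ equals, up to a nonzero constant and a shift by $ir/2$, a de Bruijn operator $\Delta_{\theta_k/2,\,r/2}$ with \emph{real} step $r/2$; the remaining factor $S_h^{-m}$ is a pure shift that merely translates roots along a vertical line, affecting neither their location on a line nor their simplicity.

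The reality of the roots (that they stay on $L_{\varphi,c}$) I would obtain by continuity. Writing $g=\lim_n p_n$ uniformly on compacta, with each $p_n$ having all roots on a horizontal line, the partial composition $A:=\prod_{k=2}^{2m}\bigl(S_h-e^{i\theta_k}I\bigr)$ consisting of all but one factor is a fixed finite linear combination of shifts, hence continuous, so $A(g)=\lim_n A(p_n)$ uniformly on compacta. By Theorem~\ref{th:mth1} each $A(p_n)$ has all its roots on a single (possibly shifted) horizontal line, whence $A(g)$ belongs to the corresponding class $\mathcal{LP}(L')$; after a vertical shift, with which $\Delta$ commutes, this is again a genuine function of class $\mathcal{LP}$.

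For simplicity I would apply the last remaining factor. The decisive point is that Theorem~\ref{th:mth3} sends \emph{every} function of $\mathcal{LP}$ — with no simplicity hypothesis whatsoever on the input — to a function with real and \emph{simple} roots. Since $A(g)\in\mathcal{LP}$ (up to the trivial vertical shift) and the last factor is a $\Delta$-operator with real step, the function $\widetilde T(g)$ equals, up to a nonzero constant and a shift, $\Delta_{\theta_1/2,\,r/2}(A(g))$, which therefore has only simple roots, all on a horizontal line. Undoing the change of variables returns the assertion for $T(f)$ on $L_{\varphi,c}$.

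The main obstacle is the second step: one must verify that the partial composition $A$ really maps the \emph{entire-function} class $\mathcal{LP}(L')$ into $\mathcal{LP}(L'')$, i.e. that $A(g)$ is still a bona fide limit of line-rooted polynomials rather than merely some entire function with roots on a line. This is exactly what forces the limiting argument built on the polynomial statement Theorem~\ref{th:mth1}, since the technique of Theorem~\ref{th:mth1} itself does not transfer to entire functions directly. Once $A(g)\in\mathcal{LP}$ is secured, the delicate simplicity conclusion is immediate from the strength of Theorem~\ref{th:mth3}, which requires no regularity of its input.
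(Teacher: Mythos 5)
Your proposal is correct and takes essentially the same route as the paper, which obtains the theorem precisely from the change of variables $g(x)=f\left(e^{i\varphi}x+c\right)\in\mathcal{LP}$, the factorization~\eqref{e7} into centered de Bruijn operators, and Theorem~\ref{th:mth3}. The only cosmetic difference is that you certify that the intermediate compositions remain in $\mathcal{LP}$ by a limiting argument through Theorem~\ref{th:mth1}, where one could equally invoke Theorem~\ref{Theorem.de_Bruijn} factor by factor; in either case the delicate simplicity conclusion is, exactly as in the paper, delegated to Theorem~\ref{th:mth3} applied to the last factor.
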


To illustrate Theorem~\ref{th:mth3}, let us consider the reciprocal gamma function
\begin{equation*}
f(z)=\dfrac1{\Gamma(z)},
\end{equation*}
then by Theorem~\ref{th:mth3}, all the roots of the entire the function
\begin{equation}\label{recipr.gamma.delta}
g(z)=\dfrac1{2i}\left[f\left(z+\frac{i}2\right)-f\left(z-\frac{i}2\right)\right]=
\dfrac1{2i}\cdot\dfrac{\Gamma\left(z-\frac{i}2\right)-\Gamma\left(z+\frac{i}2\right)}{\Gamma\left(z+\frac{i}2\right)\Gamma\left(z-\frac{i}2\right)}
\end{equation}
are real and of multiplicity one. This fact implies, for example, the following simple theorem.
\begin{theorem}\label{Theorem.reciprocal.gamma}
	The following integral
	\begin{equation*}\label{function.F}
	F(z)=\int_{0}^{+\infty}x^{z-1}e^{-x}\sin\ln(\sqrt{x})dx
	\end{equation*}
	represents (after analytic continuation) a meromorphic function on the complex plane that has only real and simple roots and (simple) poles at the points $-n\pm\dfrac i2$, $n=0,1,2,\ldots$.
\end{theorem}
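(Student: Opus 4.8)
The plan is to identify $F$ explicitly as a difference of two Gamma functions and then recognize it, up to a nonvanishing factor, as the function $g$ from~\eqref{recipr.gamma.delta}, whose zeros are already controlled by Theorem~\ref{th:mth3}. First I would rewrite the integrand: since $\sin\ln\sqrt{x}=\sin\bigl(\tfrac12\ln x\bigr)=\tfrac{1}{2i}\bigl(x^{i/2}-x^{-i/2}\bigr)$, the integral splits, for $\Re z>0$, as $F(z)=\tfrac{1}{2i}\bigl[\int_0^{+\infty}x^{(z+i/2)-1}e^{-x}\,dx-\int_0^{+\infty}x^{(z-i/2)-1}e^{-x}\,dx\bigr]$, and each piece is a standard Euler integral. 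One checks absolute convergence at $x=0$ (the factor $x^{\Re z-1}$ is integrable there precisely because $\Re(z\pm\tfrac{i}{2})=\Re z>0$, the bounded oscillating factor being harmless) and at $x=+\infty$ (the factor $e^{-x}$). This yields
\[
F(z)=\frac{1}{2i}\left[\Gamma\!\left(z+\frac{i}{2}\right)-\Gamma\!\left(z-\frac{i}{2}\right)\right],\qquad \Re z>0.
\]
The right-hand side is meromorphic on all of $\mathbb{C}$, so it furnishes the analytic continuation of $F$ claimed in the statement.

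Next I would compare this expression with $g$ defined in~\eqref{recipr.gamma.delta}, where $f=1/\Gamma$. Multiplying $g(z)$ by $\Gamma\!\left(z+\tfrac{i}{2}\right)\Gamma\!\left(z-\tfrac{i}{2}\right)$ and using the formula above gives the identity $F(z)=-\,g(z)\,\Gamma\!\left(z+\tfrac{i}{2}\right)\Gamma\!\left(z-\tfrac{i}{2}\right)$, valid as meromorphic functions on $\mathbb{C}$. The key input is then that $1/\Gamma$ belongs to the Laguerre--P\'olya class: indeed it has the representation $\tfrac{1}{\Gamma(z)}=ze^{\gamma z}\prod_{k\geqslant1}\bigl(1+\tfrac{z}{k}\bigr)e^{-z/k}$, which is of the form~\eqref{e2} (all zeros real, at the nonpositive integers). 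Since $g=\Delta_{0,1/2}(1/\Gamma)$, Theorem~\ref{th:mth3} applies and tells us that all zeros of $g$ are real and simple.

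From the factorization I can now read off both conclusions. Because $\Gamma$ never vanishes, the zeros of $F$ coincide exactly with the zeros of $g$, hence are real and simple. For the poles, the entire function $g$ is finite everywhere, so the poles of $F$ are precisely those of $\Gamma\!\left(z+\tfrac{i}{2}\right)\Gamma\!\left(z-\tfrac{i}{2}\right)$, namely the simple poles at $z=-n-\tfrac{i}{2}$ and $z=-n+\tfrac{i}{2}$, $n=0,1,2,\ldots$, i.e. at $-n\pm\tfrac{i}{2}$. The one delicate point I expect to be the main obstacle is to rule out cancellation between the zeros of $g$ and the poles of the Gamma factors, which would either destroy a pole or hide a zero; this is exactly where Theorem~\ref{th:mth3} is essential, since all zeros of $g$ are \emph{real} whereas the poles $-n\pm\tfrac{i}{2}$ lie off the real axis, so $g$ is nonzero at each pole and the poles remain simple, while conversely no real zero of $g$ can collide with a Gamma pole. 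This completes the plan.
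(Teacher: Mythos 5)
Your proposal is correct and follows essentially the same route as the paper: rewrite $\sin\ln\sqrt{x}$ as $\tfrac{1}{2i}(x^{i/2}-x^{-i/2})$ to identify $F(z)=\tfrac{1}{2i}\bigl[\Gamma(z+\tfrac i2)-\Gamma(z-\tfrac i2)\bigr]=-g(z)\Gamma(z+\tfrac i2)\Gamma(z-\tfrac i2)$ with $g=\Delta_{0,1/2}(1/\Gamma)$, and invoke Theorem~\ref{th:mth3} for the reality and simplicity of the zeros of $g$. Your explicit observation that the real zeros of $g$ cannot cancel the non-real poles of the Gamma factors is a detail the paper leaves implicit, but it is the same argument.
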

\begin{proof}
Indeed, from~\eqref{recipr.gamma.delta} one has 
$$
\dfrac1{2i}\left[\Gamma\left(z+\dfrac i2\right)-\Gamma\left(z-\dfrac i2\right)\right]=-g(z)\Gamma\left(z+\frac{i}2\right)\Gamma\left(z-\frac{i}2\right).
$$
This function is meromorphic. It has only simple real zeros and simple poles at the points  $-n\pm\dfrac i2$, $n=0,1,2,\ldots$.

Now	from the Euler integral representation of the gamma function we obtain
$$
\dfrac1{2i}\left[\Gamma\left(z+\dfrac i2\right)-\Gamma\left(z-\dfrac i2\right)\right]=
\dfrac1{2i}\int\limits_{0}^{+\infty}x^{z-1}e^{-x}\left(x^{\tfrac{i}2}-x^{-\tfrac{i}2}\right)dx=F(z),
$$
as required.
\end{proof}

\vspace{3mm}

\setcounter{equation}{0}
\section{Minimal mesh of finite differences, extremal roots, and Walsh convolution}\label{section:minimal.mesh}

In this section, we prove that the operator
\begin{equation}\label{Delta.theta.double.2}
\Delta_{\theta,h}p(x)=\dfrac{e^{i\theta}p(x+ih)-e^{-i\theta}p(x-ih)}{2i},\quad h>0,\quad \theta\in[0,\pi),
\end{equation}
defined in~\eqref{Delta.theta}, increases the mesh of hyperbolic polynomials. We remind that by the mesh of a hyperbolic polynomial we understand the minimal distance between the roots of this polynomial, see Definition~\ref{th:d4}.
Here we also prove Theorems~\ref{th:mth5} and~\ref{th:mth4}. But first, we study the roots $\lambda_{k,n}(\theta)$
of the polynomials~$Q_n$ defined in~\eqref{poly.Qn} that play a very important role in this Section.

\subsection{Zeros of the polynomials $Q_n$} 

The polynomials $Q_n$ are defined in~\eqref{poly.Qn}. It is easy to see that
\begin{eqnarray*}
	\Delta_{\theta,h}(x^n)& =&
	\dfrac{e^{i\theta}(x+ih)^n-e^{-i\theta}(x-ih)^n}{2i}=h^nQ_n\left(\dfrac{x}{h}\right)= \\
	& = &
	\sin\theta\cdot\sum_{k=0}^{\left[\tfrac n2\right]}(-1)^{k}\binom{n}{2k}h^{2k}x^{n-2k}+
	\cos\theta\cdot\sum_{k=0}^{\left[\tfrac {n-1}2\right]}(-1)^{k}\binom{n}{2k+1}h^{2k+1}x^{n-2k-1}.
\end{eqnarray*}

The roots of the equation $\Delta_{\theta,1}(x^n)=0$ were announced Introduction by the formul\ae~\eqref{cot.root.general}--\eqref{cot.root.0}. For completeness, we give here a detailed solution to this equation for arbitrary step $h>0$ and with some additional facts on their behaviour with respect to the parameter $\theta$.

\begin{lemma}\label{Lemma.roots.Delta.x^n}
The zeroes of the polynomial $\Delta_{\theta,h}(x^n)$ are real and simple and have the form
	
\noindent for $\theta\in\left(0,\pi\right)$,
\begin{equation}\label{cot.root.general.h}
\lambda_{k,n}(\theta,h)=h\cdot\cot\dfrac{\pi k-\theta}{n},\qquad k=1,\ldots,n,
\end{equation}

\noindent and for $\theta=0$,
\begin{equation}\label{cot.root.h}
\lambda_{k,n}\left(0,h\right)=h\cdot\cot\dfrac{\pi k}{n},\qquad k=1,\ldots,n-1.
\end{equation}

Moreover, whenever $\theta\neq\varphi$, $\theta,\varphi\in[0,\pi)$, the roots of $\Delta_{\theta,h}(x^n)$ and
$\Delta_{\varphi,h}(x^n)$ interlace for any $h>0$. Specifically, the functions $\lambda_{k,n}(\theta)$ are increasing on $(0,\pi)$.
\end{lemma}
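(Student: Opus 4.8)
The plan is to solve the equation $\Delta_{\theta,h}(x^n)=0$ explicitly. By the definition~\eqref{Delta.theta.double.2}, this equation is equivalent to $e^{i\theta}(x+ih)^n=e^{-i\theta}(x-ih)^n$, that is,
\[
\left(\frac{x+ih}{x-ih}\right)^n=e^{-2i\theta}.
\]
The key observation is that for a \emph{real} $x$ and $h>0$ the point $x+ih$ lies in the open upper half-plane, so it can be written uniquely as $x+ih=re^{i\beta}$ with $r>0$ and $\beta\in(0,\pi)$; then $x-ih=re^{-i\beta}$, whence $\frac{x+ih}{x-ih}=e^{2i\beta}$, and comparing real and imaginary parts gives $x=h\cot\beta$. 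Conversely, each $\beta\in(0,\pi)$ comes from exactly one real $x=h\cot\beta$, so $\beta\mapsto h\cot\beta$ is a strictly decreasing bijection of $(0,\pi)$ onto $\mathbb{R}$.

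First I would substitute this parametrization to reduce the equation to $e^{2in\beta}=e^{-2i\theta}$, i.e. $\beta=\frac{\pi k-\theta}{n}$ for $k\in\mathbb{Z}$. The constraint $\beta\in(0,\pi)$, namely $0<\frac{\pi k-\theta}{n}<\pi$, then selects the admissible values of $k$: for $\theta\in(0,\pi)$ these are $k=1,\dots,n$, and for $\theta=0$ they are $k=1,\dots,n-1$. This produces the candidate real roots $\lambda_{k,n}(\theta,h)=h\cot\frac{\pi k-\theta}{n}$ of~\eqref{cot.root.general.h}--\eqref{cot.root.h}. Since the admissible arguments $\frac{\pi k-\theta}{n}$ are distinct points of $(0,\pi)$ and $\cot$ is injective there, the roots are pairwise distinct. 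To conclude that these are \emph{all} the roots (so the polynomial has only real and simple zeros), I would read off from the displayed expansion of $\Delta_{\theta,h}(x^n)$ above the lemma that $\deg\Delta_{\theta,h}(x^n)=n$ for $\theta\in(0,\pi)$ and $=n-1$ for $\theta=0$; the number of distinct real roots found equals the degree in each case, leaving no room for others.

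For the interlacing claim I would reuse the same monotone bijection. Since $\cot$ is strictly decreasing on $(0,\pi)$, interlacing of the root sets is equivalent to interlacing of the argument sets $\{\frac{\pi k-\theta}{n}\}$ and $\{\frac{\pi j-\varphi}{n}\}$, a strictly monotone map carrying an interlacing configuration to an interlacing configuration. Assuming $\theta<\varphi$ without loss of generality, both families are arithmetic progressions of common step $\frac{\pi}{n}$, and the inequalities $\frac{\pi k-\varphi}{n}<\frac{\pi k-\theta}{n}<\frac{\pi(k+1)-\varphi}{n}$, which reduce respectively to $\theta<\varphi$ and $\varphi-\theta<\pi$, show that the two progressions strictly alternate inside $(0,\pi)$. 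Finally, the monotonicity in $\theta$ is immediate by differentiation: $\frac{d}{d\theta}\lambda_{k,n}(\theta)=\frac1n\csc^2\frac{\pi k-\theta}{n}>0$ on $(0,\pi)$.

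I expect the only genuinely fiddly point to be the endpoint bookkeeping in the interlacing, specifically the case where one of the parameters equals $0$ and the corresponding family then has $n-1$ rather than $n$ elements. Here one must check that the two extreme positions of the alternating configuration are occupied by the family whose parameter lies in $(0,\pi)$, so that its $n$ roots strictly separate the $n-1$ roots of the other family; this again follows from the same chain of strict inequalities together with $\varphi<\pi$.
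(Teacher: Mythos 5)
Your proposal is correct and follows essentially the same route as the paper: rewrite $\Delta_{\theta,h}(x^n)=0$ as $\left(\tfrac{x+ih}{x-ih}\right)^n=e^{-2i\theta}$, solve via the cotangent parametrization, and deduce interlacing from the strict monotonicity of $\cot$ on $(0,\pi)$ together with $\tfrac{d}{d\theta}\lambda_{k,n}>0$. If anything, you are slightly more explicit than the paper on two points it leaves implicit — the degree count ($n$ for $\theta\in(0,\pi)$, $n-1$ for $\theta=0$) that certifies the listed roots are \emph{all} the roots, and the endpoint bookkeeping when one parameter is $0$ — both of which are handled correctly.
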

\begin{proof}
The equation
$$
\Delta_{\theta,h}(x^n)=0
$$
can be represented in the form
\begin{equation*}
\dfrac{(x+ih)^n}{(x-ih)^n}=e^{-2i\theta}.
\end{equation*}
For $\theta\in(0,\pi)$, this equation has exactly $n$ solutions $\lambda_{k,n}(\theta)$, $k=1,\ldots,n$, satisfying the identity
\begin{equation*}
\dfrac{(\lambda_{k,n}+ih)^n}{(\lambda_{k,n}-ih)^n}=e^{i\,\tfrac{2\pi k-2\theta}{n}},\qquad k=1,\ldots,n,
\end{equation*}
so that $\lambda_k(\theta)$ have the form~\eqref{cot.root.general.h}. For the case $\theta=0$, the proof is similar.
	
Furthermore, for $\theta\in(0,\pi)$, we have
\begin{equation*}
\dfrac{d\lambda_{k,n}(\theta,h)}{d\theta}=\dfrac{h}n\left(1+\cot^2\dfrac{\pi k-\theta}{n}\right)>0,\qquad k=1,\ldots,n,
\end{equation*}
therefore, for any $0<\theta<\varphi<\pi$, the roots of the polynomials $\Delta_{\theta,h}(x^n)$ and
$\Delta_{\varphi,h}(x^n)$ do not coincide. Moreover, it is easy to see that
$$
\lambda_{k,n}(\theta)>\lambda_{k+1,n}(\varphi)
$$
since the cotangent function is decreasing between its poles. Consequently, 
the roots of the polynomials $\Delta_{\theta,h}(x^n)$ and
$\Delta_{\varphi,h}(x^n)$ interlace for any~$h>0$. 
\end{proof}

Furthermore, it is clear that
$$
\lambda_{1,n}(\theta,h)>\lambda_{2,n}(\theta,h)>\ldots>\lambda_{m,n}(\theta,h),
$$
where $m=n$ if $\theta\in(0,\pi)$, and $m=n-1$ if $\theta=0$ provided $h>0$. Since the cotangent function is strictly decreasing on the 
interval $(0,\pi)$ we obtain that the largest zero $\mu_{max}(Q_n)$ of $Q_n$ equals $\lambda_{1,n}(\theta,h)$, while
the minimal zero  $\mu_{min}(Q_n)$ equals $\lambda_{m,n}(\theta,h)$ that agrees with formul\ae~\eqref{min.max.Qn.zeros}--\eqref{min.max.Qn.zeros.0}.

\vspace{2mm}

Finally, let us find the mesh of the polynomials $Q_n$. A more general fact is true.

\begin{prepos}\label{Proposition.mesh.xn}
Given the operator $\Delta_{\theta, h}$ defined by the formula~\eqref{Delta.theta.double.2}, the mesh of the 
polynomials~$\Delta_{\theta, h}(x^n)$, $n\in\mathbb{N}$, satisfies the formula~\eqref{min.mesh}.
\end{prepos}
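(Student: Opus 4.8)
The plan is to work directly from the explicit description of the zeros of $Q_n$ furnished by Lemma~\ref{Lemma.roots.Delta.x^n}, reducing the mesh computation to a one-line trigonometric identity followed by a discrete optimization over the index of consecutive roots. First I would set $\alpha_k=\frac{\pi k-\theta}{n}$, so that for $\theta\in(0,\pi)$ the roots are $\lambda_{k,n}(\theta,h)=h\cot\alpha_k$, $k=1,\ldots,n$, listed in strictly decreasing order, and note that each $\alpha_k$ lies in $(0,\pi)$, whence $\sin\alpha_k>0$. Applying the identity $\cot A-\cot B=\frac{\sin(B-A)}{\sin A\sin B}$ to two consecutive roots, whose arguments differ by exactly $\pi/n$, gives
$$\lambda_{k,n}(\theta,h)-\lambda_{k+1,n}(\theta,h)=h\,\frac{\sin\frac{\pi}{n}}{\sin\alpha_k\,\sin\alpha_{k+1}},\qquad k=1,\ldots,n-1.$$
Since $h>0$ and $\sin\frac{\pi}{n}>0$, computing $\mesh\,\Delta_{\theta,h}(x^n)$ amounts to \emph{maximizing} the product $\sin\alpha_k\sin\alpha_{k+1}$ over the admissible indices.

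Next I would convert the product into a single cosine via $\sin\alpha_k\sin\alpha_{k+1}=\frac12\bigl(\cos\frac{\pi}{n}-\cos(\alpha_k+\alpha_{k+1})\bigr)$, so that maximizing the product is the same as bringing the sum $\alpha_k+\alpha_{k+1}=\frac{\pi(2k+1)-2\theta}{n}$ as close as possible to $\pi$, because $\cos$ attains its minimum on $(0,2\pi)$ at $\pi$. This turns the problem into minimizing $\bigl|\pi(2k+1-n)-2\theta\bigr|$ over integers $k\in\{1,\ldots,n-1\}$, i.e. finding the integer $k$ closest to $\frac{n-1}{2}+\frac{\theta}{\pi}$. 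Here the parity of $n$ enters: for even $n$ the target lies in the open interval $\bigl(\frac{n-1}{2},\frac{n+1}{2}\bigr)$, whose only integer is $k=\frac{n}{2}$; for odd $n$ the target lies between the integers $\frac{n-1}{2}$ and $\frac{n+1}{2}$, and the closer one is selected according as $\theta\lessgtr\frac{\pi}{2}$, the two choices producing the same extremal gap by symmetry.

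Finally I would substitute the optimal index back, use the sum-to-product identity $\cos X+\cos Y=2\cos\frac{X+Y}{2}\cos\frac{X-Y}{2}$ to rewrite $\frac12\bigl(\cos\frac{\pi}{n}-\cos(\alpha_k+\alpha_{k+1})\bigr)$ as a product of two cosines, and then set $\theta=\frac{\pi}{2}\pm\frac{\psi}{2}$ with $\psi\in[0,\pi]$ to arrive at the closed form~\eqref{min.mesh}, treating the even and odd cases separately. The degenerate case $\theta=0$ (where $Q_n$ has degree $n-1$ and the roots are $h\cot\frac{\pi k}{n}$, $k=1,\ldots,n-1$) follows by the identical computation, or as the limit $\psi\to\pi$; this is where the extra restriction $n\geqslant3$ for $\theta=0$ surfaces, since a pair of consecutive roots must exist.

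The two trigonometric identities and the concluding simplification are routine; the single step requiring genuine care is the discrete optimization, namely verifying that the maximizer of $\sin\alpha_k\sin\alpha_{k+1}$ is exactly the claimed index, that this index lies in the admissible range $\{1,\ldots,n-1\}$ (forcing $n\geqslant2$, respectively $n\geqslant3$ when $\theta=0$), and that in the tie case for odd $n$ both candidate indices yield the same value. This parity-dependent bookkeeping, rather than any analytic difficulty, is the main obstacle.
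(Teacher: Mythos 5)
Your proposal is correct and follows essentially the same route as the paper's proof: the explicit cotangent form of the roots, the difference formula $d_k=\dfrac{2h\sin\frac{\pi}{n}}{\cos\frac{\pi}{n}-\cos\nu_k}$ with $\nu_k=\frac{2\pi k+\pi-2\theta}{n}$, and the parity-split discrete optimization locating the index whose $\nu_k$ is nearest to $\pi$ (the paper phrases this last step through the convexity and the reflection symmetry $F(u)=F(2\pi-u)$ of $F(u)=\frac{2h\sin(\pi/n)}{\cos(\pi/n)-\cos u}$, whereas you use the monotonicity of $\cos u$ in $|u-\pi|$ on $(0,2\pi)$, but the content is identical). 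The only point worth flagging is that your final even-degree expression $\dfrac{h\sin\frac{\pi}{n}}{\cos\frac{\pi-\psi}{2n}\,\cos\frac{\pi+\psi}{2n}}$ is exactly what the paper's own proof derives, while the displayed formula~\eqref{min.mesh} shows $\cos\frac{\pi+\psi}{n}$ in the denominator, which appears to be a typo in the statement rather than a discrepancy in your argument.
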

\begin{proof}
Suppose first that $\theta\in(0,\pi)$. It is clear that if
\begin{equation*}
d_k:=\lambda_{k,n}(\theta,h)-\lambda_{k+1,n}(\theta,h),
\end{equation*}
then
\begin{equation*}\label{distance.xn}
\mesh\,\Delta_{\theta,h}(x^n)=\min\limits_{1\leqslant k\leqslant n-1} d_k.
\end{equation*}
From the formula of the difference of cotangents we obtain
\begin{equation*}
d_k=\dfrac{2h\sin\frac{\pi}{n}}{\cos\frac{\pi}{n}-\cos\frac{2\pi k+\pi -2\theta}{n}}.
\end{equation*}

Now if we introduce the numbers 
\begin{equation}\label{coeff.nu}
\nu_k=\dfrac{2\pi k+\pi -2\theta}{n},\qquad k=1,\ldots,n-1,
\end{equation}
and the function
\begin{equation*}
F(u)=\dfrac{2h\sin\frac{\pi}{n}}{\cos\frac{\pi}{n}-\cos u},
\end{equation*}
then we get
\begin{equation*}
d_k=F(\nu_k),\qquad k=1,\ldots, n-1.
\end{equation*}

The function $F(u)$ is convex function on the interval $\left(\dfrac{\pi}{n},2\pi-\dfrac{\pi}{n}\right)$ satisfying the the following reflection condition
\begin{equation}\label{F.reflection}
F(u)=F(2\pi-u),\qquad u\in\left(\dfrac{\pi}{n},2\pi-\dfrac{\pi}{n}\right),
\end{equation}
so that its minimum is achieved at the point $u=\pi$.

Since $\theta\in(0,\pi)$ by assumption, it follows that 
\begin{equation}\label{coeff.nu.ineq}
\dfrac{2\pi k-\pi}{n}<\nu_k<\dfrac{2\pi k+\pi}{n},\qquad k=1,\ldots,n-1.
\end{equation}

\vspace{2mm}

Let $n=2l$, $l\in\mathbb{N}$. From~\eqref{coeff.nu} and~\eqref{coeff.nu.ineq} we have
\begin{equation*}\label{Proposition.mesh.xn.proof.1}
\dfrac{\pi}{2l}<\nu_1<\nu_2<\ldots<\nu_{l-1}<\pi-\dfrac{\pi}{2l},
\end{equation*}
and
\begin{equation*}
\pi+\dfrac{\pi}{2l}<\nu_{l+1}<\nu_{l+2}<\ldots<\nu_{2l-1}<2\pi-\dfrac{\pi}{2l}.
\end{equation*}
Moreover,
\begin{equation*}
\pi-\dfrac{\pi}{2l}<\nu_l<\pi+\dfrac{\pi}{2l}.
\end{equation*}
Therefore, 
\begin{equation*}
F(\nu_k)>F(\nu_{k+1}),\qquad k=1,\ldots,l-2,
\end{equation*}
and
\begin{equation*}
F(\nu_j)<F(\nu_{j+1}),\qquad k=l+1,\ldots,2l-2.
\end{equation*}

Now if 
\begin{equation*}
\pi-\dfrac{\pi}{2l}<\nu_l\leqslant\pi,
\end{equation*}
that is, if $\theta\in\left(0,\dfrac{\pi}2\right]$, so that $\theta=\dfrac{\pi}{2}-\dfrac{\psi}{2}$, $\psi\in(0,\pi)$, then
\begin{equation*}
\pi\leqslant2\pi-\nu_l<\pi+\dfrac{\pi}{2l}<\nu_{l+1}.
\end{equation*}
So from~\eqref{coeff.nu.ineq} from the convexity of the function $F$ we obtain
\begin{equation*}
F(\nu_l)<F(\nu_{l-1}),\qquad F(\nu_l)<F(\nu_{l+1}),
\end{equation*}
consequently,
\begin{equation*}
\mesh\,\Delta_{\theta,h}(x^n)=\min\limits_{1\leqslant k\leqslant n-1}d_k=d_l=F(\nu_l)=\dfrac{h\sin\frac{\pi}{n}}{\cos\frac{\theta}{n}\cdot \cos\frac{\pi-\theta}{n}}=
\dfrac{h\sin\frac{\pi}{n}}{\cos\frac{\pi+\psi}{2n}\cdot \cos\frac{\pi-\psi}{2n}},
\end{equation*}
that agrees with~\eqref{min.mesh}. If $\theta\in\left(\dfrac{\pi}2,\pi\right)$, then in the same one can show that the $d_l$ is the minimal distance between the roots of $\Delta_{\theta, h}(x^n)$, and the formula for the mesh of $\Delta_{\theta,h}(x^n)$ is the same.

\vspace{2mm}

Let now $n=2l+1$, $l\in\mathbb{N}$.  From~\eqref{coeff.nu} and~\eqref{coeff.nu.ineq} we have
\begin{equation*}
\dfrac{\pi}{2l+1}<\nu_1<\nu_2<\ldots<\nu_{l-1}<\pi-\dfrac{2\pi}{2l+1},
\end{equation*}
\begin{equation*}
\pi+\dfrac{2\pi}{2l+1}<\nu_{l+2}<\nu_{l+2}<\ldots<\nu_{2l-1}<2\pi-\dfrac{\pi}{2l+1}.
\end{equation*}
and
\begin{equation*}
\pi-\dfrac{2\pi}{2l+1}<\nu_l<\pi<\nu_{l+1}<\pi+\dfrac{2\pi}{2l+1}.
\end{equation*}

From the convexity of the function $F$ and from its symmetry property~\eqref{F.reflection} we obtain that the minimum of $d_k$ can be achieved for $k=l$ if $2\pi-\nu_l\leqslant\nu_{l+1}$ or for $k=l+1$ if $2\pi-\nu_l\geqslant\nu_{l+1}$. The inequality
\begin{equation*}
2\pi-\nu_l\leqslant\nu_{l+1}
\end{equation*}
holds whenever $\theta\in\left(0,\frac{\pi}{2}\right]$ while the inequality
\begin{equation*}
2\pi-\nu_l\geqslant\nu_{l+1}
\end{equation*}
holds if $\theta\in\left[\frac{\pi}{2},\pi\right)$. Thus, if $\theta=\frac{\pi-\psi}{2}$ for some $\psi\in(0,\pi)$, then
\begin{equation*}
\mesh\,\Delta_{\theta,h}(x^n)=\min\limits_{1\leqslant k\leqslant n-1}d_k=d_l=F(\nu_l)=\dfrac{h\sin\frac{\pi}{n}}{\cos\frac{\pi-2\theta}{2n}\cdot \cos\frac{\pi+2\theta}{2n}}=
\dfrac{h\sin\frac{\pi}{n}}{\cos\frac{\psi}{2n}\cdot \cos\frac{2\pi-\psi}{2n}},
\end{equation*}
If $\theta=\frac{\pi+\psi}{2}$ for some $\psi\in(0,\pi)$ we obtain analogously 
\begin{equation*}
\mesh\,\Delta_{\theta,h}(x^n)=\min\limits_{1\leqslant k\leqslant n-1}d_k=d_{l+1}=F(\nu_{l+1})=\dfrac{h\sin\frac{\pi}{n}}{\cos\frac{\pi-2\theta}{2n}\cdot \cos\frac{3\pi-2\theta}{2n}}=
\dfrac{h\sin\frac{\pi}{n}}{\cos\frac{\psi}{2n}\cdot \cos\frac{2\pi-\psi}{2n}},
\end{equation*}
which agrees with formula~\eqref{min.mesh}.

The case $\theta=0$ (or $\theta=\pi$ that is the same) can be established analogously.
\end{proof}

The correspondent properties of the roots of the polynomials $Q_n$ can be obtained by putting $h=1$ in Lemma~\ref{Lemma.roots.Delta.x^n} and Proposition~\ref{Proposition.mesh.xn}.

\subsection{Walsh convolution. Proof of Theorem~\ref{th:mth5}}

To prove Theorem~\ref{th:mth5} we need a few prelimiary facts and definitions. 

\begin{definition}
Two complex polynomials $p$ and $q$ of degree $n$ are called apolar if 
\begin{equation}\label{th:r1}
\sum_{k=0}^{n}(-1)^k \ p^{(k)}(0)\cdot q^{(n-k)}(0)=0.
\end{equation}
\end{definition} 
%

The following remarkable theorem due to J.H.\,Grace (see, e.g.,  \cite[Chapter~5,~\textsection 3,~Problem~145]{PS} and~\cite{Prasolov}) states that the complex zeros of two apolar polynomials cannot be separated by a straight line or by a circle.
\begin{theorem}[Grace, 1902]\label{Theorem.Grace}
Suppose $p$ and $q$ are two apolar polynomials of degree $n \geqslant 1$. If all zeros of~$p$ lie 
in a circular region $C,$ then $q$ has at least one zero in $C$. 
\end{theorem}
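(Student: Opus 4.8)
The plan is to prove Grace's theorem by the classical method of \emph{polar derivatives} combined with Laguerre's theorem. For a polynomial $f$ of degree $n$ and a point $\zeta\in\mathbb{C}$, define the polar derivative $D_\zeta f(z):=nf(z)+(\zeta-z)f'(z)$, which is a polynomial of degree at most $n-1$. Two preliminary facts are needed. First, the operators $D_\zeta$ and $D_\eta$ commute. Second, and crucially, if $q(z)=b\prod_{j=1}^{n}(z-\beta_j)$ has degree exactly $n$ (so $b\neq0$), then the apolarity condition~\eqref{th:r1} for $p$ and $q$ is equivalent to the vanishing of the constant $D_{\beta_1}D_{\beta_2}\cdots D_{\beta_n}p$. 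I would prove this identity by induction on $n$: for $n=1$ a direct computation gives $D_{\beta_1}p=p(0)+\beta_1p'(0)$, which is exactly the left-hand side of~\eqref{th:r1}; the inductive step follows because a single polar differentiation lowers the degree by one and transforms the apolar bilinear form in a compatible way, as one checks by matching coefficients. Here it is essential that each successive $D_{\beta_j}$ is formed using the \emph{current} degree of the polynomial it acts on.

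The engine of the argument is Laguerre's theorem: if every zero of a polynomial $f$ of degree $n$ lies in a circular region $C$ and $\zeta\notin C$, then every zero of $D_\zeta f$ also lies in $C$. I would prove it from the relation satisfied by a zero $w$ of $D_\zeta f$ with $f(w)\neq0$, namely $\tfrac1n\sum_{k}\tfrac1{w-z_k}=\tfrac1{w-\zeta}$, where $z_1,\dots,z_n$ are the zeros of $f$. Assuming $w\notin C$, the M\"obius map $T(\xi)=1/(w-\xi)$ sends $w$ to $\infty$, so $\infty\notin T(C)$ and $T(C)$ is a bounded disk, hence convex; it contains every $T(z_k)$. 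Since the right-hand side above equals $T(\zeta)$ and is the arithmetic mean of the $T(z_k)$, it lies in $T(C)$, forcing $\zeta\in C$, a contradiction. Zeros $w$ with $f(w)=0$ lie in $C$ trivially.

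With these tools the theorem follows by contradiction. Suppose $p$ has all its zeros in $C$, but, contrary to the claim, $q$ has \emph{no} zero in $C$, so every $\beta_j\notin C$. Applying $D_{\beta_1},\dots,D_{\beta_{n-1}}$ in succession and invoking Laguerre's theorem at each stage, the polynomial $g:=D_{\beta_{n-1}}\cdots D_{\beta_1}p$ keeps all of its zeros inside $C$; being of degree one, it has a single zero, which lies in $C$. Since for a linear polynomial one has $D_{\beta_n}g=g(\beta_n)$, and $\beta_n\notin C$ while the zero of $g$ is in $C$, we conclude $g(\beta_n)\neq0$. Thus $D_{\beta_1}\cdots D_{\beta_n}p\neq0$, contradicting the apolarity of $p$ and $q$ established above. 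Hence $q$ must have at least one zero in $C$.

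The principal obstacle is Laguerre's theorem itself, and within its proof the verification that $T(C)$ is convex and that the excluded position $w\notin C$ really yields the contradiction. A second, more technical point is degree bookkeeping: a polar derivative can in principle drop the degree by more than one (a zero escaping to infinity), and $q$ may have degree strictly less than $n$. Both are handled cleanly by working on the Riemann sphere, where circular regions, their complements, and the point at infinity are treated on equal footing; in particular, when $C$ is convex (a disk or half-plane) and $\zeta\notin C$ the degree never drops, because the degree-drop value of $\zeta$ equals the centroid of the zeros of $f$, which necessarily lies in $C$. The case $\deg q<n$ is absorbed either by this spherical viewpoint or by approximating $q$ with degree-$n$ apolar polynomials.
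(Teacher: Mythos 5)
The paper never proves Grace's theorem: it is imported as a known result with pointers to P\'olya--Szeg\H{o} and Prasolov, so there is no in-text argument to measure yours against. What you give is the classical proof via polar derivatives and Laguerre's theorem, and it is correct. The three ingredients are all sound: the identification of the apolar form \eqref{th:r1} with the iterated polar derivative $D_{\beta_n}\cdots D_{\beta_1}p$ taken at the zeros $\beta_j$ of $q$ (your $n=1$ computation matches the left-hand side of \eqref{th:r1} up to the harmless nonzero factor $b$ from $q(z)=b\prod(z-\beta_j)$, and the decrementing degree convention is indeed the right one); Laguerre's theorem via the M\"obius map $T(\xi)=1/(w-\xi)$ and the convexity of the bounded image $T(C)$; and the terminal step $D_{\beta_n}g=g(\beta_n)\neq0$ for the linear polynomial $g$. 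You also correctly isolate the only delicate points, namely the possible degree drop and the case $D_\zeta f\equiv0$ (which forces $f=c(z-\zeta)^n$ and hence $\zeta\in C$, so it cannot occur here). Two remarks: since the paper's Definition of apolarity already assumes $\deg q=n$, your concern about $\deg q<n$ is moot; and the only instance of Grace's theorem the paper actually uses, in the proof of Theorem~\ref{Theorem.Walsh}, takes $C$ to be a closed half-plane, which is convex, so your centroid observation shows the degree never drops and the Riemann-sphere bookkeeping is not even needed there. For full generality (with $C$ the exterior of a disk) one would have to state the spherical form of Laguerre's theorem, counting zeros at infinity as lying in $C$ when $\infty\in C$; that is a standard elaboration rather than a gap.
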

We remind that a circular region is a closed or open half-plane, disk or exterior of a disk. The following object
was studied by T.\,Takagi~\cite{Takagi} in 1921 and by J.L.\,Walsh~\cite{Walsh} in 1922, see also~\cite[Section~5.3]{Rahman_Scm}. It was named after Walsh but seems to be considered by some other researchers before.

\begin{definition} [\cite{Takagi,Walsh}]
For any two complex polynomials $p$ and $q$ of degree $n$ the \textit{Walsh convolution} is a polynomial of the following form
\begin{equation}\label{th:r2}
p\boxplus q \ (x) \ = \ \sum_{k=0}^{n} p^{(k)}(0)\cdot q^{(n-k)}(x).
\end{equation}
\end{definition} 

From~\eqref{th:r1} and~\eqref{th:r2} it is easy to see that

\begin{equation}\label{th:r3}
p\boxplus q \ (x_0)=0 \qquad\Longleftrightarrow \qquad p(-x)\quad\text{and}\quad q(x+x_0)\quad\text{are apolar}.
\end{equation}

\vspace{2mm}

Moreover, for the Walsh convolution of two polynomials the following fact is true.
\begin{theorem}[Oishi 1921]\label{Theorem.Walsh}
Let $p$ and $q$ be hyperbolic polynomials of degree $n$. Then their Walsh convolution $p\boxplus q$ is also hyperbolic.

If in addition all the roots of the polynomial $p$ lie in the interval $[\alpha,\beta]$, and all the roots of the polynomial $q$ lie in the interval $[\gamma,\delta]$, then all zeros of the polynomial $p\boxplus q$ lie in the interval $[\alpha+\gamma,\beta+\delta]$.
\end{theorem}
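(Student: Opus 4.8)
The plan is to derive the whole statement from Grace's theorem (Theorem~\ref{Theorem.Grace}) by way of the apolarity characterization of the Walsh convolution recorded in~\eqref{th:r3}. First I would fix an arbitrary zero $x_0\in\mathbb{C}$ of $p\boxplus q$ and translate the condition $p\boxplus q(x_0)=0$ into apolarity: by~\eqref{th:r3}, the two polynomials $P(z):=p(-z)$ and $Q(z):=q(z+x_0)$ are apolar. Both have degree exactly $n$ (reflection and translation preserve degree), so Theorem~\ref{Theorem.Grace} applies. Since $p$ is hyperbolic with all roots in $[\alpha,\beta]$, the zeros of $P$ are the reflected roots $-r_j\in[-\beta,-\alpha]$, all real; and the zeros of $Q$ are $s_j-x_0$, where $s_j\in[\gamma,\delta]$ are the (real) roots of $q$. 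I would also note that the leading term of $p\boxplus q$ coming from $k=n$ in~\eqref{th:r2} equals $n!\,a_p a_q\,q$-leading-coefficient times $x^n$, which is nonzero, so $p\boxplus q$ has degree $n$ and exactly $n$ zeros to locate.

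The core of the argument is four applications of Grace's theorem to the apolar pair $(P,Q)$, each with a closed half-plane (a circular region). Using that all zeros of $P$ lie in $[-\beta,-\alpha]$, I would choose in turn the half-planes $\{\Re z\geqslant-\beta\}$, $\{\Re z\leqslant-\alpha\}$, $\{\Im z\geqslant0\}$, and $\{\Im z\leqslant0\}$; in each case all zeros of $P$ lie in the chosen region, so by Theorem~\ref{Theorem.Grace} the polynomial $Q$ has at least one zero there. Writing such a zero as $s_j-x_0$ and reading off the real or imaginary part yields, respectively,
$$
\Re x_0\leqslant s_j+\beta\leqslant\delta+\beta,\qquad \Re x_0\geqslant s_j+\alpha\geqslant\gamma+\alpha,\qquad \Im x_0\leqslant0,\qquad \Im x_0\geqslant0.
$$
The last two force $\Im x_0=0$, so $x_0$ is real, while the first two give $\alpha+\gamma\leqslant x_0\leqslant\beta+\delta$. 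Since $x_0$ was an arbitrary zero, every zero of $p\boxplus q$ lies in $[\alpha+\gamma,\beta+\delta]$; in particular all zeros are real, proving hyperbolicity. The first assertion of the theorem is then the special case obtained by taking $[\alpha,\beta]$ and $[\gamma,\delta]$ to be any finite intervals containing the roots of $p$ and $q$.

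The only genuine points requiring care, and the likely main obstacle, are the correct bookkeeping in the two preliminary reductions and the use of Grace's theorem in symmetric form. One must verify that apolarity is a symmetric relation (the defining sum~\eqref{th:r1} is invariant up to the sign $(-1)^n$ under $p\leftrightarrow q$ together with $k\mapsto n-k$), so that ``all zeros of $P$ in $C$'' may be used to conclude ``$Q$ has a zero in $C$''; and one must track the reflection $r_j\mapsto-r_j$ and translation $s_j\mapsto s_j-x_0$ so that the half-plane inequalities convert cleanly into bounds on $\Re x_0$ and $\Im x_0$. Everything else is a direct reading of Grace's conclusion, so no further estimates are needed.
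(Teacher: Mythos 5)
Your proof is correct and follows essentially the same route as the paper: translate $p\boxplus q\,(x_0)=0$ into apolarity of $p(-z)$ and $q(z+x_0)$ via~\eqref{th:r3} and invoke Grace's theorem (Theorem~\ref{Theorem.Grace}) with suitable circular regions. The only cosmetic difference is that you extract reality and the interval bounds simultaneously from four half-plane applications of Grace, whereas the paper first rules out non-real zeros by a line-separation contradiction and then treats the interval containment separately; both hinge on the same two ingredients.
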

\noindent This theorem follows from a theorem proved by T.\,Takagi~\cite{Takagi} (see also~\cite[Theorem~5.3.3]{Rahman_Scm}) as it mentioned in~\cite{Takagi}. In its turn the Tagaki theorem follows
from a theorem proved by J.L.\,Walsh in~\cite{Walsh} (see also~\cite[Theorem~3.4.1c]{Rahman_Scm}). However, in~\cite{Takagi} T.\,Takagi attributed Theorem~\ref{Theorem.Walsh} to K.\,Oishi.

For convenience of the reader we provide a proof of this theorem.
\begin{proof}[Proof of Theorem~\ref{Theorem.Walsh}]
First we prove that $p\boxplus q$ is hyperbolic whenever $p$ and $q$ are hyperbolic. Indeed, suppose
that this is not true, and there exists a number $x_0$, $\Im x_0=b\neq0$ such that
$p\boxplus q(x_0)=0$ while $p$ and $q$ are hyperbolic. Then by~\eqref{th:r3} the polynomials $p(-x)$ and $q(x+x_0)$ are apolar. Moreover, by assumption, all the roots of  $p(-x)$  lie on the line $\{\Im z=0\}$, while all the roots of $q(x+x_0)$  
lie on  the line $ \{\Im z=-b\}$.  This implies that the roots of the polynomials $p(-x)$ and $q(x+x_0)$ can be separated 
by a straight line that contradicts the Grace theorem~\ref{Theorem.Grace}. So 
$p\boxplus q$ is hyperbolic.

Suppose now that all the roots of $p$ lie in the interval $[\alpha,\beta]$, and all the roots of the polynomial $q$ lie in the interval $[\gamma,\delta]$. Let $x_0\in\mathbb{R}$ be a root of $p\boxplus q$. Then the roots of $p(-x)$
 lie in the interval $[-\beta,-\alpha]$, while the roots of $q(x+x_0)$ lie in the interval $[\gamma-x_0,\delta-x_0]$, so by  the Grace theorem there exists a point $\zeta \in \mathbb{R}$ such that 
\begin{equation*}%
-\beta\leqslant\zeta\leqslant-\alpha\qquad\text{and}\qquad\gamma-x_0\leqslant\zeta\leqslant\delta-x_0.
\end{equation*}
Consequently, 
$$
\alpha+\gamma\leqslant x_0\leqslant\beta+\delta,
$$
as required.
\end{proof}

Now we are in a position to prove Theorem~\ref{th:mth5}.

\begin{proof}[Proof of Theorem~\ref{th:mth5}] 
Let 
\begin{equation*}
p(x)=\sum_{k=0}^n \frac{1}{k!}p^{(k)}(0) x^k
\end{equation*}

be a hyperbolic polynomial. Then
\begin{equation}\label{th:r4}
\Delta_{\theta,h}(p)(x) =\sum_{k=0}^n \dfrac{1}{k!}p^{(k)}(0)\Delta_{\theta, h}(x^k)=
\dfrac{1}{n!}\sum_{k=0}^np^{(k)}(0)\cdot\dfrac{d^{\,n-k}\left(\Delta_{\theta, h}(x^n)\right)}{dx^{n-k}}=
\dfrac{1}{n!}\cdot p \boxplus\Delta_{\theta, h}(x^n).
\end{equation}

Now the statement of Theorem \ref{th:mth5} follows from Theorem~\ref{Theorem.Walsh} and from
the formula 
$$
\Delta_{\theta, h}(x^n)=h^nQ_n\left(\dfrac{x}{h}\right).
$$ 
\end{proof}

Note that the formul\ae\eqref{min.max.Qn.zeros}--\eqref{min.max.Qn.zeros.0} for $ \mu_{max}(Q_n)$ 
and $ \mu_{min}(Q_n)$ follow from~\eqref{cot.root.general.h}--\eqref{cot.root.h} for $h=1$

\subsection{Proof of Theorem~\ref{th:mth4}}

In this section, we study the mesh of polynomials in the image of the operator $\Delta_{\theta, h}$ defined in~\eqref{Delta.theta.double.2} acting on the set $\mathcal{HP}$ of all hyperbolic polynomials. Remind that the mesh of a hyperbolic polynomial is the minimal distance between its roots, see Definition~\ref{th:d4}.

One of the first results on operators increasing the mesh of hyperbolic polynomials was obtained by M.\,Riesz who proved that
the operator of differentiation increases the mesh.
\begin{theorem}[M. Riesz, 1925]\label{Theorem.Riesz}
Let $p\in {\mathcal HP}$, $\deg p \geqslant 3$. Then 
$\mesh\, (p^{\prime}) \geqslant \mesh\,(p) $. If all zeros of $p$ are simple, then 
$\mesh\,(p') > \mesh\, (p)$. 
\end{theorem}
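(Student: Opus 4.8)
The plan is to derive the inequality from the classical equivalence between interlacing of zeros and hyperbolicity of linear pencils, combined with the elementary fact that $d/dx$ maps $\mathcal{HP}$ into itself. First I would dispose of the degenerate cases: if $p$ has a multiple root then $\mesh(p)=0\le\mesh(p')$ and there is nothing to prove, while if $\deg p=2$ then $p'$ is linear and $\mesh(p')=+\infty$. So I may assume $\deg p=n\ge 3$ and, for the first inequality, that the roots $x_1<x_2<\cdots<x_n$ of $p$ are distinct, with $m:=\mesh(p)=\min_{1\le k\le n-1}(x_{k+1}-x_k)>0$.

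The key observation is that $p(x)$ and its shift $q(x):=p(x-m)$ form an interlacing pair. Indeed, the zeros of $q$ are $x_1+m,\dots,x_n+m$, and the mesh condition $x_k+m\le x_{k+1}$ produces the alternation
\[
x_1<x_1+m\le x_2<x_2+m\le x_3<\cdots<x_n<x_n+m .
\]
Since $p$ and $q$ have the same degree and the same leading coefficient, I would invoke the classical fact that two real polynomials have interlacing real zeros if and only if every real linear combination of them is hyperbolic (the Hermite--Kakeya--Obreschkoff pencil criterion) to conclude that $p(x)+t\,q(x)$ is hyperbolic for every $t\in\mathbb{R}$.

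Next I would differentiate. As $d/dx$ is linear and preserves $\mathcal{HP}$ (the base case of the Hermite--Poulain theorem, or simply Rolle's theorem), the polynomial
\[
\frac{d}{dx}\bigl(p(x)+t\,p(x-m)\bigr)=p'(x)+t\,p'(x-m)
\]
is hyperbolic for every real $t$. Applying the converse direction of the pencil criterion to $p'$ and $q'=p'(\cdot-m)$, both of which have degree $n-1$ and equal leading coefficients, I would conclude that these two polynomials interlace. Writing $y_1<\cdots<y_{n-1}$ for the zeros of $p'$, the zeros of $q'$ are $y_k+m$, and interlacing forces $y_k+m\le y_{k+1}$, that is $\mesh(p')\ge m=\mesh(p)$, which is the first assertion.

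For the strict inequality I would analyse the equality case, and this I expect to be the main obstacle. At the shift $s=m$ the polynomials $p$ and $p(\cdot-m)$ necessarily share a zero, namely the root of $p$ realizing the minimal gap, so the interlacing of $p'$ and $p'(\cdot-m)$ obtained above is only weak; one must rule out that this forces two \emph{consecutive} zeros of $p'$ at distance exactly $m$. Running the same argument for each $s\in(0,m)$ gives \emph{strict} interlacing of $p$ and $p(\cdot-s)$ (no coincidences, as $x_k+s<x_{k+1}$), hence strict interlacing of $p'$ and $p'(\cdot-s)$ and $\mesh(p')>s$; but letting $s\to m$ only recovers $\mesh(p')\ge m$, so a genuinely separate argument is needed at $s=m$. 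I would settle it by assuming toward a contradiction that some pair $y,\,y+m$ of consecutive zeros of $p'$ are exactly $m$ apart, writing out the two conditions $p'/p=\sum_k(x-x_k)^{-1}=0$ at $y$ and at $y+m$, and comparing them term by term against the hypothesis that every gap of $p$ is at least $m$, which should be incompatible with the simplicity of the zeros of $p$. The careful bookkeeping in the two uses of the pencil criterion (matching degrees and signs of leading coefficients, and excluding common factors) is the place where the hypotheses $\deg p\ge 3$ and simplicity of the roots genuinely enter.
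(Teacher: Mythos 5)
Your argument for the weak inequality $\mesh(p')\geqslant\mesh(p)$ is correct, and it is essentially the route the paper itself takes: the paper does not prove Riesz's theorem directly, but cites Stoyanoff~\cite{Sto} for the elementary proof and observes that the weak inequality is a special case of Fisk's theorem (Theorem~\ref{Theorem.Fisk}), whose proof in Section~\ref{section:minimal.mesh} is exactly your combination of the Hermite--Kakeya--Obreschkoff pencil criterion (Theorem~\ref{Theorem.pseudo.Obreschkov}) with the facts that the operator preserves $\mathcal{HP}$ and commutes with real shifts. So on the first assertion you and the paper agree.

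The strict inequality is where your proposal stops being a proof. You correctly diagnose that the pencil argument degenerates at the shift $s=m$ and only yields $\mesh(p')\geqslant m$, and you then say that the equality case ``should be'' excluded by comparing the conditions $\sum_k(y-x_k)^{-1}=0$ at two consecutive critical points; but you do not carry this out, and as written this half is a declaration of intent. The comparison does work --- it is in fact the whole of Stoyanoff's argument and proves the entire theorem without any pencil. Concretely: let $y<y'$ be consecutive zeros of $p'$, so that $y\in(x_j,x_{j+1})$ and $y'\in(x_{j+1},x_{j+2})$, and suppose $y'-y\leqslant m:=\mesh(p)$. Since $y'-y\leqslant x_k-x_{k-1}$, one has $y'-x_k\leqslant y-x_{k-1}$ for every $k\geqslant 2$, and the two quantities have the same sign (both positive for $k\leqslant j+1$, both negative for $k\geqslant j+2$), whence $1/(y'-x_k)\geqslant 1/(y-x_{k-1})$. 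Summing and using $\sum_{k=1}^{n}(y-x_k)^{-1}=0$,
\[
0=\sum_{k=1}^{n}\frac{1}{y'-x_k}\geqslant\frac{1}{y'-x_1}+\sum_{k=2}^{n}\frac{1}{y-x_{k-1}}
=\frac{1}{y'-x_1}+\frac{1}{x_n-y}>0,
\]
a contradiction; here $\deg p\geqslant 3$ is needed so that two consecutive critical points exist, and simplicity of the $x_k$ so that each gap contains exactly one. Supplying this computation closes the gap; without it, your proof establishes only the non-strict inequality.
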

An elementary proof of this theorem was given by A. Stoyanoff~\cite{Sto}. 

The following a remarkable result saying that
if a hyperbolicity preserver commutes with the shift operator, it does 
not decrease mesh, was established (implicitly) by by S.\,Fisk~\cite[p. 226, Lemma 8.25]{Fisk}. This result implies Riesz's theorem, in particular.

\begin{theorem}[Fisk~\cite{Fisk}]\label{Theorem.Fisk}
If  $A: {\mathcal HP} \to  {\mathcal HP}$ 
is a linear operator commuting with the shift operator $S_b$ for any
$  b\in \mathbb{R}$, that is, $A S_b = S_b A$,  then for every
$p\in {\mathcal HP}$ the following inequality holds
$$
\mesh\, (A(p)) \geqslant \mesh\, (p).
$$
\end{theorem}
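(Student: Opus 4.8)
The plan is to exploit the classical equivalence between interlacing of two real polynomials and hyperbolicity of the entire real pencil they span (the Hermite--Kakeya--Obreschkoff theorem): two real polynomials $f,g$ of equal degree have weakly interlacing zeros if and only if every real linear combination $\alpha f+\beta g$ is hyperbolic. First I would dispose of the trivial cases: if $p$ has a multiple zero then $\mesh(p)=0$ and the inequality is automatic, while if $\deg A(p)\leqslant 1$ then $\mesh(A(p))=+\infty$. Hence I may assume $\mesh(p)=m>0$, so that the zeros $x_1<x_2<\cdots<x_n$ of $p$ satisfy $x_{j+1}-x_j\geqslant m$ for all $j$.

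The crucial observation is that this mesh condition is itself an interlacing statement involving a shift. The inequalities $x_{j+1}-x_j\geqslant m$ say exactly that the zeros of $p(x)$ and of $p(x+m)=S_{-m}(p)(x)$ interlace, since the latter has zeros $x_j-m$. By the forward direction of the Hermite--Kakeya--Obreschkoff theorem, this interlacing yields that $\alpha\,p+\beta\,S_{-m}(p)$ is hyperbolic for every $\alpha,\beta\in\mathbb{R}$.

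Next I would transport this pencil through $A$. Using linearity of $A$ together with the commutation relation $AS_{-m}=S_{-m}A$ (which holds because $m\in\mathbb{R}$), one has
$$
A\bigl(\alpha\,p+\beta\,S_{-m}(p)\bigr)=\alpha\,A(p)+\beta\,S_{-m}\bigl(A(p)\bigr).
$$
Since $A$ maps $\mathcal{HP}$ into $\mathcal{HP}$ and each member of the pencil on the left is hyperbolic, the right-hand side is hyperbolic for all real $\alpha,\beta$. Applying the converse direction of the Hermite--Kakeya--Obreschkoff theorem to the pair $A(p)$ and $S_{-m}(A(p))$, which have equal degree because a shift preserves degree, I conclude that their zeros interlace. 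Translating this interlacing back in terms of the zeros $y_1<\cdots<y_k$ of $A(p)$ gives $y_{j+1}-y_j\geqslant m$ for every $j$ (in particular all roots are simple), that is, $\mesh(A(p))\geqslant m=\mesh(p)$.

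The main obstacle is the careful handling of the equivalence itself. The assertion ``all real combinations hyperbolic $\Leftrightarrow$ interlacing'' requires precise conventions for weak interlacing and for the degenerate combinations in which the leading terms cancel and the degree drops; here this is unavoidable, since $A(p)$ and $S_{-m}(A(p))$ share the same leading coefficient, so the combination with $\alpha+\beta=0$ has lower degree. I would therefore need to pin down the correct version of the theorem that certifies genuine interlacing from the all-hyperbolic pencil rather than a degenerate situation. Once this is settled, the argument reduces to the two applications above together with the purely bookkeeping translation between interlacing and the mesh.
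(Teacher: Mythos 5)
Your proposal is correct and follows essentially the same route as the paper: both arguments rest on the Hermite--Kakeya--Obreschkoff equivalence (the paper's Theorem~\ref{Theorem.pseudo.Obreschkov}) together with the observation that $\mesh(p)\geqslant b$ is precisely the statement that $p$ and its shift by $b$ interlace, and both then transport the hyperbolic pencil through $A$ using linearity and the commutation with shifts. The only difference is presentational --- you run the argument directly with the single shift $b=\mesh(p)$, whereas the paper argues by contradiction with a shift $h$ strictly between $\mesh(A(p))$ and $\mesh(p)$; the degenerate-combination caveat you flag is real but equally present (and equally glossed over) in the paper's version.
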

\noindent Here $S_b$ is the shift operator defined in~\eqref{shift.operator}.

Note that S.Fisk formulated this theorem in another form, so it is not easy to recognize that 
Fisk's theorem is the statement above. To make our work more self-contained, we provide a proof of 
Theorem~\ref{Theorem.Fisk} here. To do it we remind to the reader one more definition.

\begin{definition}
Let $g$ and $h$ be two hyperbolic polynomials of degree $n$ with roots $\nu_j$ and $\xi_j$, $j=1,\ldots,n$, respectively.
The polynomials $h$ and $g$ are said to have non-strictly interlacing roots if 
$$
\nu_1\leqslant\xi_1\leqslant\nu_2\leqslant\xi_2\leqslant\cdots\leqslant\nu_n\leqslant\xi_n,
$$
or
$$
\xi_1\leqslant\nu_1\leqslant\xi_2\leqslant\nu_2\leqslant\cdots\leqslant\xi_n\leqslant\nu_n.
$$
\end{definition}

The following theorem is sometimes called Obreschkov's theorem (see, for example \cite[p. 12]{O}). In fact, this theorem  rediscovered many times by different authors in the past, see the surveys~\cite{HoltzTyaglov} and~\cite{KreinNaimark}
for more details. 

\begin{theorem}\label{Theorem.pseudo.Obreschkov}
Two real polynomials $g$ and $h$ of the same degree
have (non-strictly) interlacing roots if and only if the polynomial $cg(z)+dh(z)$ is hyperbolic for any $c,d\in\mathbb{R}$.
\end{theorem}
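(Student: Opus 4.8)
The plan is to prove both implications after reducing to the case where $g$ and $h$ are coprime, which corresponds to \emph{strict} interlacing with $2n$ distinct simple roots. First I would factor out the greatest common divisor, writing $g=r\,g_0$ and $h=r\,h_0$ with $\gcd(g_0,h_0)=1$. Since $cg+dh=r\,(c g_0+d h_0)$ and $r$ (a factor of the hyperbolic polynomial $g$) is itself hyperbolic, the family $\{cg+dh\}$ consists of hyperbolic polynomials exactly when every $c g_0+d h_0$ is hyperbolic; likewise non-strict interlacing of $g,h$ is equivalent to strict interlacing of $g_0,h_0$ together with hyperbolicity of the common part $r$. Thus it suffices to treat coprime $g,h$ of common degree $n$. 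Relabelling, I may assume both have positive leading coefficient and, in the interlacing situation (after interchanging $g,h$ if necessary),
$$\nu_1<\xi_1<\nu_2<\xi_2<\cdots<\nu_n<\xi_n,$$
where $\nu_j$ and $\xi_j$ denote the roots of $g$ and $h$ respectively.

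For sufficiency I would use a sign-counting argument. Fix $(c,d)\neq(0,0)$ and set $f=cg+dh$; the case $d=0$ is trivial, so assume $d\neq0$. Evaluating at the roots of $g$ gives $f(\nu_j)=d\,h(\nu_j)$, and since exactly one root of $h$ lies in each interval $(\nu_j,\nu_{j+1})$, the numbers $h(\nu_1),\dots,h(\nu_n)$ strictly alternate in sign, hence so do $f(\nu_1),\dots,f(\nu_n)$. By the intermediate value theorem $f$ has at least one root in each of the $n-1$ disjoint intervals $(\nu_j,\nu_{j+1})$, and therefore at least $n-1$ distinct real roots. Because $\deg f\in\{n-1,n\}$ and the non-real roots of a real polynomial occur in conjugate pairs, having $n-1$ real roots already forces \emph{all} roots of $f$ to be real. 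This handles uniformly both the generic case $\deg f=n$ and the degenerate case $\deg f=n-1$ in which the leading terms cancel, so $f$ is hyperbolic.

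For necessity I would argue the contrapositive: assuming all $cg+dh$ hyperbolic (so in particular $g,h$ are hyperbolic), I show the coprime pair must interlace. A counting argument on the $n$ roots of each polynomial shows that failure of strict interlacing forces, after possibly interchanging $g$ and $h$, a bounded gap $(\nu_j,\nu_{j+1})$ between consecutive roots of $g$ that contains no root of $h$. On the closure of that gap $h$ keeps a constant sign, so the real rational function $R=g/h$ is continuous there, vanishes at both endpoints, and is nonzero inside; it therefore attains an interior extremum $\mu^\ast=R(x^\ast)\neq0$ with $R'(x^\ast)=0$. The relation $R'(x^\ast)=0$ reads $g'(x^\ast)h(x^\ast)-g(x^\ast)h'(x^\ast)=0$, which together with $g(x^\ast)=\mu^\ast h(x^\ast)$ shows that $g-\mu^\ast h$ has a double root at $x^\ast$; for $\mu$ slightly beyond $\mu^\ast$ the two coalescing real roots split off the real axis into a conjugate pair, so $g-\mu h$ is not hyperbolic, contradicting the hypothesis.

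The main obstacle is the bookkeeping rather than any single deep step. On the sufficiency side the delicate point is the possible degree drop $\deg f=n-1$, resolved cleanly by the conjugate-pair parity observation. On the necessity side one must (i) verify that failure of interlacing genuinely forces a \emph{bounded} empty gap for one of the two polynomials, as opposed to an anomaly only at infinity; (ii) rule out multiple roots at the outset, which I would do by the same coalescing-roots mechanism — if $g$ has a non-shared multiple root $\nu$, then near $\nu$ one has $g-\mu h\approx C(x-\nu)^2-\mu\,h(\nu)$, and a one-sided choice of $\mu$ produces a complex pair by a Rouch\'e perturbation; and (iii) confirm that the turning-point roots actually leave the real axis, which follows from the strict monotonicity of $R$ on either side of $x^\ast$. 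As an alternative route I could invoke the Hermite--Biehler theorem, relating interlacing with constant Wronskian sign to the location of the zeros of $g+ih$ in one open half-plane, but the elementary sign-counting argument above is self-contained and better matched to the techniques used elsewhere in the paper.
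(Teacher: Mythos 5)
Your proposal cannot be matched against an internal argument, because the paper does not actually prove this statement: Theorem~\ref{Theorem.pseudo.Obreschkov} is the classical Hermite--Kakeya--Obreschkoff theorem, and the authors simply cite Obreschkoff's book and the surveys \cite{HoltzTyaglov} and \cite{KreinNaimark} for it. Judged on its own merits, your sketch is correct and is essentially the standard elementary proof. The sufficiency half is clean: the values $f(\nu_j)=d\,h(\nu_j)$ alternate strictly in sign, the intermediate value theorem yields $n-1$ distinct real roots, and since a nonzero $f=cg_0+dh_0$ with $n-1$ distinct roots must have degree $n-1$ or $n$ (the identically vanishing case being excluded by coprimality), your conjugate-pair parity observation finishes the argument uniformly. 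The necessity half --- producing a bounded root-free gap of one polynomial when strict interlacing fails, taking the interior extremum $\mu^{\ast}$ of $R=g/h$ on that gap, and pushing the resulting double root of $g-\mu^{\ast}h$ off the real axis by a one-sided perturbation in $\mu$ --- is also sound, as is your separate disposal of multiple roots. The one step you assert rather than prove is the reduction lemma: that with $r=\gcd(g,h)$, $g=rg_0$, $h=rh_0$, non-strict interlacing of $g$ and $h$ is \emph{equivalent} to strict interlacing of $g_0$ and $h_0$ together with hyperbolicity of $r$. Both implications of that equivalence are used, one in each direction of the theorem, and neither is completely obvious; the quickest justification is via the counting functions $t\mapsto\#\{\text{roots}\leqslant t\}$, which for $g$ and $h$ differ everywhere by $0$ or $1$ exactly when the same holds for $g_0$ and $h_0$, since removing the common root multiset shifts both counts equally. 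With that lemma written out, the proof is complete.
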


\begin{proof}[Proof of Theorem~\ref{Theorem.Fisk}]
Let $p\in\mathcal{HP}$. If $\mesh\,(p)=0$, then the statement of the theorem is obvious (even without the condition $AS_b=S_bA$ for any $b\in\mathbb{R}$).

Suppose now that $\mesh\,(p)>0$, that is, the roots of $p$ are simple. Note that the zeros of $p(x)$ and $p(x+b)$, $b\in\mathbb{R}$, are (non-strictly) interlacing if and only if $b \leqslant\mesh\,(p)$. So according to Theorem~\ref{Theorem.pseudo.Obreschkov}, the polynomial 
\begin{equation*}
cp(x)+dS_b(p)(x)\in\mathcal{HP}
\end{equation*}
for any $c,d\in\mathbb{R}$ if and only if $b \leqslant\mesh\,(p)$.  

By assumption, $AS_b=S_bA$ for any $b\in\mathbb{R}$. Therefore, if $b \leqslant\mesh\,(p)$, then 
$cp(x)+dS_b(p)(x)\in\mathcal{HP}$ for any $c,d\in\mathbb{R}$, and the polynomial
\begin{equation*}
A(cp(x)+dS_b(p)(x))=cA(p)(x)+dA(S_b(p))(x)=cA(p)(x)+dS_b(A(p))(x)
\end{equation*}
is hyperbolic, since $A$ is a hyperbolicity preserver by assumption.

Suppose, on the contrary, that for the given polynomial $p$, the operator $A$ decreases the mesh, that is,  
$\mesh\,(p)<\mesh\,(A(p))$. Then there exists a real number $h$ such that
\begin{equation*}
\mesh\,(A(p))<h<\mesh\,(p).
\end{equation*}
Consequently, the roots of the polynomials $A(p)(x)$ and $A(p)(x+h)=S_h(A(p))(x)$ do not interlace, so 
by Theorem~\ref{Theorem.pseudo.Obreschkov} there exist real numbers $\widehat{c}$ and $\widehat{d}$ 
such that
\begin{equation*}
\widehat{c}A(p)(x)+\widehat{d}S_h(A(p))(x)\notin\mathcal{HP}.
\end{equation*}
This contradicts to the assumptions that $A$ is a hyperbolicity preserver commuting with $S_b$ for any $b\in\mathbb{R}$.
Therefore, $\mesh\,(A(p))\geqslant\mesh\,(p)$, as required.
\end{proof}
Note that in~\cite{KSV} an analogous proof was used for the minimal quotent of roots 
instead of the minimal distance. Furthermore, by Lemma~\ref{Lemma.step.1}, the linear operator $\Delta_{\theta, h}$ is a hyperbolicity preserver for any $\theta\in[0,\pi)$ and $h\in\mathbb{R}$. Since
$\Delta_{\theta, h}$  commutes with any shift operator,  it follows from Theorem~\ref{Theorem.Fisk} that $\Delta_{\theta, h}$ does not decrease mesh. 

Let us show now that in the class of all hyperbolic polynomials of degree $n$ the polynomial $x^n$ is 
extremal in the sense that the mesh of $\Delta_{\theta, h}(x^n)$ is minimal among all the meshes in the image of
the operator $\Delta_{\theta, h}$ acting on hyperbolic polynomials of degree $n$. To do this we need the following theorem.
\begin{theorem}
Let $p$ and $q$ be hyperbolic polynomials of degree $n$. Then 
\begin{equation}\label{th:r5} 
\mesh\,(p\boxplus q)\geqslant\max(\mesh\,(p),\mesh\,(q)).
\end{equation}
\end{theorem}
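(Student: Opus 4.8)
The goal is to prove that $\mesh\,(p\boxplus q)\geqslant\max(\mesh\,(p),\mesh\,(q))$ for hyperbolic polynomials $p,q$ of degree $n$. The plan is to realize the Walsh convolution with a \emph{fixed} polynomial as a linear operator and then invoke the Fisk-type mesh result, Theorem~\ref{Theorem.Fisk}. By symmetry of $p\boxplus q$ in $p$ and $q$ (up to the normalization evident in~\eqref{th:r2}), it suffices to prove $\mesh\,(p\boxplus q)\geqslant\mesh\,(q)$ for every fixed hyperbolic $p$; applying the same statement with the roles swapped then yields the maximum.

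First I would fix a hyperbolic polynomial $p$ of degree $n$ and define the operator
$$
A_p(q):=p\boxplus q.
$$
Two properties must be checked. \textbf{Hyperbolicity preservation:} by Theorem~\ref{Theorem.Walsh}, $p\boxplus q$ is hyperbolic whenever both $p$ and $q$ are, so $A_p:\mathcal{HP}\to\mathcal{HP}$ restricted to degree-$n$ polynomials. \textbf{Commutation with shifts:} I would verify that $A_p$ commutes with every shift $S_b$, $b\in\mathbb{R}$. From the definition~\eqref{th:r2}, $p\boxplus q\,(x)=\sum_{k=0}^n p^{(k)}(0)\,q^{(n-k)}(x)$, and since differentiation in $x$ commutes with the shift $S_b$ (both are constant-coefficient operators in $x$), one gets
$$
A_p(S_b q)(x)=\sum_{k=0}^n p^{(k)}(0)\,(S_b q)^{(n-k)}(x)=\sum_{k=0}^n p^{(k)}(0)\,q^{(n-k)}(x-b)=S_b(A_p q)(x).
$$

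With both hypotheses of Theorem~\ref{Theorem.Fisk} verified for $A_p$, that theorem immediately gives
$$
\mesh\,(A_p(q))=\mesh\,(p\boxplus q)\geqslant\mesh\,(q)
$$
for every hyperbolic $q$ of degree $n$. Swapping the roles of $p$ and $q$ (using that the Walsh convolution is symmetric) gives $\mesh\,(p\boxplus q)\geqslant\mesh\,(p)$, and combining the two inequalities yields the claimed bound.

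The only genuine subtlety I anticipate is a bookkeeping one rather than a conceptual obstacle: one must make sure $A_p$ genuinely maps the space of degree-$n$ hyperbolic polynomials to itself so that Theorem~\ref{Theorem.Fisk} applies on the correct space, and that the leading coefficient $p^{(n)}(0)\neq0$ does not cause $A_p(q)$ to drop degree. Since $\deg p=\deg q=n$ and $A_p(q)=p^{(n)}(0)\,q(x)+(\text{lower-order-derivative terms})$, the leading coefficient of $A_p(q)$ is $p^{(n)}(0)\,(\text{lead }q)\neq0$, so the degree is preserved and the application of the Fisk theorem is legitimate. Everything else is a direct substitution into the two previously established results.
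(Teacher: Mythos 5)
Your proof is correct and follows essentially the same route as the paper: fix $p$, view $q\mapsto p\boxplus q$ as a hyperbolicity preserver via Theorem~\ref{Theorem.Walsh}, apply Theorem~\ref{Theorem.Fisk} to get $\mesh\,(p\boxplus q)\geqslant\mesh\,(q)$, and use the commutativity $p\boxplus q=q\boxplus p$ for the other inequality. Your explicit verification that $A_p$ commutes with shifts and preserves degree is a welcome addition that the paper leaves implicit.
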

\begin{proof}
For a given complex polynomial, introduce the  operator $A_p:\mathbb{C}[x]\mapsto\mathbb{C}[x]$
defined by the following formula
\begin{equation*}
A_{p}(q)= p \boxplus q.
\end{equation*}
If $p\in\mathcal{HP}$, then $A_p$ is a hyperbolicity preserver by Theorem~\ref{Theorem.Walsh}, so it does not decrease mesh,
according to Theorem~\ref{Theorem.Fisk}
\begin{equation*}
\mesh\,(p\boxplus q)\geqslant\mesh\,(q).
\end{equation*}
On the other hand, the Takagi-Walsh convolution possesses the following commutative property, see, e.g., formula (5.3.3) in~\cite{Rahman_Scm},
\begin{equation*}
p\boxplus q=q\boxplus p.
\end{equation*}
Therefore, one has
\begin{equation*}
\mesh\,(p\boxplus q)\geqslant\mesh\,(p),
\end{equation*}
as required.
\end{proof}

Now we are in a position to prove Theorem~\ref{th:mth4}.

\begin{proof}[Proof of Theorem~\ref{th:mth4}]
Let us apply the operator $\Delta_{\theta, h}$ to a hyperbolic polynomial $p$. By
\eqref{th:r4} and \eqref{th:r5} one obtains
$$
\mesh\,(\Delta_{\theta, h}(p))\geqslant 
\mesh\,(p \boxplus\Delta_{\theta, h}(x^n))\geqslant\max(\mesh\, (p),\mesh\,(\Delta_{\theta, h}(x^n))),
$$
as required.
\end{proof}

Due to the commutativity of the operator $\Delta_{\theta, h}$ with any shift operator $S_b$ for $b\in\mathbb{R}$,
we have that $\mesh\,(\Delta_{\theta, h}(x^n))=\mesh\,(\Delta_{\theta, h}((x-c)^n))$ for any $c\in\mathbb{R}$.
However, our proof of Theorem~\ref{th:mth4} does not answer the question whether there exists or not
a polynomial $q\in\mathcal{HP}$ of some degree $n$, $q(x)\not\equiv(x-c)^n$ for any $c\in\mathbb{R}$, 
such that $\mesh\,(q)=\mesh\,(\Delta_{\theta, h}(x^n))$, see Section~\ref{section:open.problems} for details.

On some possibilities of extending the results of this section to the Laguerre-P\'olya class, see~\cite{Golitsyna}.

\setcounter{equation}{0}
\section{Polynomials with roots in a strip and in a half-plane}\label{section:polynomials.strip}

In this section, we prove Theorem~\ref{th:mth2}, an analogue of the 
Hermite-Poulain theorem for polynomials with roots in a strip, and also show that if a polynomial $p$ has 
roots in a half-plane, then the roots $\Delta_{\theta, ih}(p)$ can be multiple.

\begin{proof}[Proof of Theorem~\ref{th:mth2}]
First, suppose that the roots of a given polynomial $p$ lie in the strip 
$$
\mathcal{S}_r:=\{z\in\mathbb{C}\ :\ |\Im z|\leqslant r\}
$$
for some $r>0$, but not on the same line. Thus, the condition $1)$ means that the step $h$ is pure imaginary.

In this case, the proof of the necessity of the conditions $1)$ and $3)$ is similar to the proof of Theorem~\ref{th:mth1}.
Indeed, suppose that the operator~\eqref{e3} preserves the set of complex polynomials with zeros in 
the strip~$\mathcal{S}_r$. Then from \eqref{f2}  we get that all the zeros of the rational 
function 
$$
R_n(x) =\frac{T(P_n) (x)}{x^n}
$$
belong to $\mathcal{S}_r$. Then all the zeros of the function
the polynomial
$$
G_n(y) := R_n \left(\frac{n}{y}\right)
$$ 
lie in the the set
$$
D_n :=\left \{z \in\mathbb{C}:\  
\left|z + i\frac{n}{2r}\right| \geqslant  \frac{n}{2r},\    \left|z - i\frac{n}{2r}\right| \geqslant  \frac{n}{2r}\right \}.
$$ 

As $n\to\infty$, the sequence of polynomials $\{G_n(y)\}_{n=1}^{\infty}$ converges uniformly on compact sets
to the following entire function
$$
f(y) := \sum_{j=l}^m 
a_j e^{-j h y} = Q (e^{- h y}).
$$
Each zero of the function $f$ is the accumulation point of a sequence of zeros of $G_n$. 
Clearly,  if a sequence $\{ z_k\}_{k\in \mathbb{N}}$ has a limit $y_0$ and  
if	$z_k \in D_k $, $\forall k\in \mathbb{N}$,  then $y_0$  is real. In the same way as in the 
proof of Theorem~\ref{th:mth1}, we get that $\Re h =0$, and any non-zero root of $Q$ lie on the unit circle. 
Thus, the operator~$T$ defined in~\eqref{e3} has the form
\begin{equation}\label{e7e} 
T=  S_{i \beta}^l \prod_{k=1}^{m-l}(S_{i \beta}- e^{i\theta_k}I),
\end{equation}
 where $i\beta=h$, $\beta\in\mathbb{R}\setminus\{0\}$, $\theta_k\in[0,2\pi)$, $k=1,2,\ldots,m-l$, and the shift operator $S_{\lambda}$
 is defined in~\eqref{shift.operator}.
 
 Let us now prove the necessity of the condition $2)$. Suppose that all roots of a polynomial~$p$
lie on the line~$L_{0,ir}$. Then by Lemma~\ref{Lemma.step.1}, from~\eqref{e7e} we have that the roots 
of the polynomial $T(p)$ lie on the line~$L_{0,ic}$ where 
$$
c_1=r+l\beta+\dfrac{(m-l)\beta}2=r+\dfrac{(m+l)\beta}2.
$$
Analogously, if the polynomial $p$ has all roots on the line~$L_{0,-ir}$, then all roots of $T(p)$
lie on the line~$L_{0,ic_2}$ with
$$
c_2=-r+\dfrac{(m+l)\beta}2.
$$
Since the operator $T$ preserves the strip $\mathcal{S}_r$ by assumption, we must have
$$
\left|\pm r+\dfrac{(m+l)\beta}2\right|\leqslant r,
$$
that implies
$$
\dfrac{(m+l)\beta}2=0,
$$
or, equivalently, $m=-l$ as required.

\vspace{2mm}

Let us prove now the sufficiency of the conditions $1)$--$3)$ for the operator $T$ to preserve
the strip $\mathcal{S}_r$. Suppose again that all roots of the polynomial
$$
p(z)=a\prod\limits_{j=1}^n(z-z_j), \qquad a\neq0,
$$
lie in the strip $\mathcal{S}_r$, that is, $|\Im z_j|\leqslant r$, $j=1,\ldots,n$. Consider the polynomial
$$
q(z)=(S_{i\beta}-e^{i\theta}I)(p)(z),
$$
where $\beta\in\mathbb{R}\setminus\{0\}$ and $\theta\in[0,2\pi)$.

It is clear that
\begin{equation}\label{Proof.strip.1}
q(\mu)=0\quad\Longleftrightarrow\quad\prod\limits_{j=1}^n\dfrac{\mu-z_j-i\beta}{\mu-z_j}=e^{i\theta}.
\end{equation}

Since
$$
\left|\dfrac{z_0-z_j-i\beta}{z_0-z_j}\right|<1\quad\text{whenever} \Im z>r+\dfrac{\beta}2
$$
and
$$
\left|\dfrac{z_0-z_j-i\beta}{z_0-z_j}\right|>1\quad\text{whenever} \Im z<-r+\dfrac{\beta}2,
$$
one obtains that all the roots of $q(z)$ lie in the strip 
$$
\left\{z\in\mathbb{C}\ :\ \left|\Im z-\dfrac{\beta}2\right|\leqslant r\right\}.
$$
Now since the operator $T$ satisfying the conditions $1)$--$3)$ can be represented in the form~\eqref{e7e} with $m=-l$, we obtain that all the roots of the polynomial $T(p)$ lie in the strip
$$
\left\{z\in\mathbb{C}\ :\ \left|\Im z-\dfrac{\beta(m+l)}2\right|\leqslant r\right\}=S_r,
$$
as required.

Prove now that the roots of the polynomial $T(p)$ are simple provided the width of the strip is less than~$\dfrac{|h|}2$ and the operator $T$ satisfies the conditions $1)$--$3)$. Due to the formula~\eqref{e7e}, it is enough to prove that if the roots of a polynomial $p$ lie in the strip~$\mathcal{S}_r$
with $r\leqslant\dfrac{\beta}4$, $\beta>0$, then the roots of the polynomial
\begin{equation}\label{th:simplicity.poly.strip.proof.1}
f(z):=p(z-i\beta)-e^{i\theta}p(z),\quad \theta\in[0,2\pi),
\end{equation}
are simple. We prove this by contradiction.

Let
$$
p(z)=\prod\limits_{j=1}^{n}(z-z_j)
$$
with 
\begin{equation}\label{th:simplicity.poly.strip.proof.1.5}
|\Im z_j|\leqslant r\leqslant\dfrac{\beta}4,\quad j=1,\ldots,n.
\end{equation}
Let $\lambda$ be a root of $f(z)$ such that
\begin{equation}\label{th:simplicity.poly.strip.proof.2}
f(\lambda)=f'(\lambda)=0.
\end{equation}
Since
$$
f\left(z+\dfrac{i\beta}2\right)=-e^{i\tfrac{\theta}2}\left[e^{i\tfrac{\theta}2}\cdot p\left(z+\dfrac{i\beta}2\right)-e^{-i\tfrac{\theta}2}\cdot p\left(z-\dfrac{i\beta}2\right)\right],
$$
we have that
\begin{equation}\label{th:simplicity.poly.strip.proof.2.5}
\left|\Im\lambda-\dfrac{\beta}2\right|\leqslant r\leqslant\dfrac{\beta}4.
\end{equation}

From~\eqref{th:simplicity.poly.strip.proof.1}--\eqref{th:simplicity.poly.strip.proof.2} it follows that
$$
\dfrac{p'(\lambda-i\beta)}{p(\lambda-i\beta)}=\dfrac{p'(\lambda)}{p(\lambda)},
$$
that after simple calculations gives us
\begin{equation}\label{th:simplicity.poly.strip.proof.3}
\sum\limits_{j=1}^n\dfrac{1}{\lambda-z_j-i\beta}=\sum\limits_{j=1}^n\dfrac{1}{\lambda-z_j}.
\end{equation}
But this identity is impossible, since the imaginary part of the left-hand side of~\eqref{th:simplicity.poly.strip.proof.3} has the form
$$
\sum\limits_{j=1}^n\dfrac{\Im z_j-\Im\lambda+\beta}{(\Re\lambda-\Re z_j)^2+(\Im\lambda-\beta-\Im z_j)^2}>0,
$$
while the imaginary part of the right-hand side of~\eqref{th:simplicity.poly.strip.proof.3} is 
$$
\sum\limits_{j=1}^n\dfrac{\Im z_j-\Im\lambda}{(\Re\lambda-\Re z_j)^2+(\Im\lambda-\Im z_j)^2}<0.
$$
These inequalities follow from the assumptions that not all roots of $p$ lie on the same line, and from the inequalities
$$
-\beta\leqslant\Im z_j-\Im\lambda\leqslant0, \quad j=1,\ldots,n,
$$
implied by~\eqref{th:simplicity.poly.strip.proof.1.5} and~\eqref{th:simplicity.poly.strip.proof.2.5}, 
a contradiction.

Thus, Theorem~\ref{th:mth2} is true for the strip $\mathcal{S}_r$, $r>0$. The general case of an arbitrary strip bounded by lines $L_{\varphi,c_1}$ and $L_{\varphi,c_2}$, $c_1\neq c_2$, can be obtained from this particular case by changing of variables $w=e^{-i\varphi}z+\dfrac{i}2\cdot\Im\left(e^{-i\varphi}(c_1+c_2)\right)$.
\end{proof}

Unfortunately, we cannot say whether the restriction on the width of the strip is sharp in Theorem~\ref{th:mth2}. Calculations show that the roots of $T(p)$ are simple if the width of the strip where the roots of the polynomial $p$ lie
is less than $2|h|$, and we suppose that this is true, see Section~\ref{section:open.problems} for details.

\vspace{2mm}

As we mentioned in Introduction, zero strip preservers are also zero half-plane preservers, see Corollary~\ref{Corol:mth2.}. However, in the case of half-planes, for any step $h$ there exists a polynomial $p_h$ such that the roots of the polynomial $T(p_h)$ are multiple. For example, for the polynomial $p_h(x)=(x^2+h^2)^2$, we have
$
p(x+ih)-p(x-ih)=x^3.
$

\setcounter{equation}{0}
\section{Asymptotics of the roots of $\Delta_{\theta,h}$}\label{section:asymptotics}

In this section, we prove Theorem~\ref{th:mth6}  stating that if
\begin{equation}\label{polynomial.general}
p(z)=a_0z^n+a_1z^{n-1}+\cdots+a_{n-1}z+a_n,\qquad a_j\in\mathbb{C},\  a_0\neq0,
\end{equation}
is an arbitrary complex polynomial, then the $k$-th root of the polynomial
$\Delta_{\theta, h}(p)$ satisfies the following asymptotic formula
\begin{equation}\label{asympt.formula.double}
\begin{array}{l}
\displaystyle\mu_k(\theta,h)=h\cdot\lambda_{k,n}(\theta)-\dfrac{a_1}{n\,a_0}-\dfrac{Q_n''\left(\lambda_{k,n}(\theta)\right)}{n!\,Q_n'\left(\lambda_{k,n}(\theta)\right)}\cdot
\dfrac{p^{(n-2)}\left(-\tfrac{a_1}{n\,a_0}\right)}{a_0h}-\\
 \\
\displaystyle-\dfrac{Q_n'''\left(\lambda_{k,n}(\theta)\right)}{n!\,Q_n'\left(\lambda_{k,n}(\theta)\right)}\cdot
\dfrac{p^{(n-3)}\left(-\tfrac{a_1}{n\,a_0}\right)}{a_0h^2}\,+\,
O\left(\frac{1}{h^3}\right), \quad k=1,\ldots,m,
\end{array}
\end{equation}
as $|h|\to\infty$, where
$$
Q_n(x)=\Delta_{\theta,1}(x^n)=\dfrac{e^{i\theta}(x+i)^n-e^{-i\theta}(x-i)^n}{2i},
$$
and $\lambda_{k,n}(\theta)$ are its roots defined in~\eqref{cot.root.general} and~\eqref{cot.root.0}. We remind the reader that the operator $\Delta_{\theta, h}$ is defined in~\eqref{Delta.theta}. The number $m$ equals $n$ if $\theta\in(0,2\pi)$, and $m=n-1$ if $\theta=0$. For the sake of brevity, we will denote the roots of the polynomial $Q_n(x)$ as $\lambda_{k,n}$.

Let us denote
\begin{equation}
P_n(z):=\Delta_{\theta, h}\,p(z),  
\end{equation}
where $p$ is defined in~\eqref{polynomial.general}.
Dividing $P_n(z)$ by $a_0h^n$, one obtains
$$
\dfrac{P_n(z)}{a_0h^n}=\sum\limits_{j=0}^n\dfrac{a_j}{a_0h^n}\cdot\dfrac{e^{i\theta}(z+ih)^{n-j}-e^{-i\theta}(z-ih)^{n-j}}{2i}
=\sum\limits_{j=0}^n\dfrac{a_j}{a_0h^j}\cdot Q_{n-j}\left(\dfrac{z}{h}\right).
$$

Changing variables as follows
\begin{equation*}
x=\dfrac{z}h,
\end{equation*}
we can reformulate the problem as the finding the asymptotics of the roots $\nu_{k,n}(\theta,h)$ of 
the equation
\begin{equation}\label{asympt.equation.2}
Q_n(x)+\dfrac{a_1}{a_0h}\cdot Q_{n-1}(x)+\dfrac{a_2}{a_0h^2}\cdot Q_{n-2}(x)+\sum\limits_{j=3}^n\dfrac{a_j}{a_0h^j}\cdot Q_{n-j}\left(x\right)=0
\end{equation}
as $|h|\to\infty$ for any $\theta\in[0,2\pi)$. Note that
\begin{equation}\label{deriv.Q_n}
Q_{l}(x)=\dfrac{l!}{n!}\cdot\dfrac{d^{n-l}\left[Q_{n}(x)\right]}{dx^{n-l}},\quad, l=1,\ldots,n-1,
\end{equation}
so equation~\eqref{asympt.equation.2} can be rewritten in the following form
\begin{equation}\label{asympt.equation.1}
Q_n(x)+\dfrac{a_1}{a_0\,h}\cdot \dfrac{Q'_{n}(x)}{n}+\dfrac{a_2}{a_0\,h^2}\cdot\dfrac{Q''_{n}(x)}{n(n-1)}+
\sum\limits_{j=3}^n\dfrac{a_j}{a_0\,h^j}\cdot\dfrac{(n-j)!\cdot Q^{(j)}_{n}\left(x\right)}{n!}=0.
\end{equation}

\vspace{2mm}

Let us fix an index $k$, $k=1,\ldots,m$, and introduce the polynomial
\begin{equation}\label{a4} 
g_n(x)=\dfrac{Q_{n}(x)}{x-\lambda_{k,n}}.
\end{equation}
It is easy to see that
\begin{equation*}
g_n\left(\lambda_{k,n}\right)=\lim\limits_{x\to\lambda_{k,n}}\dfrac{Q_{n}(x)}{x-\lambda_{k,n}}=
Q_n'(\lambda_{k,n}),
\end{equation*}
and, in general,
\begin{equation}\label{Function.g}
g^{(l)}_n\left(\lambda_{k,n}\right)=
\dfrac{Q_n^{(l+1)}(\lambda_{k,n})}{l+1},\qquad l=1,\ldots,n-1.
\end{equation}

By Hurwitz's theorem, there exists a number $\rho>0$ such that for large values of the number $h$ the circle $|x-\lambda_{k,n}|<\rho$ contains only one root of the equation~\eqref{asympt.equation.1}. This root is $\nu_{k,n}(\theta,h)$, that is, 
\begin{equation}\label{c1} 
|\nu_{k,n}(\theta,h)-\lambda_{k,n}|<\rho,\quad |\nu_{l,n}(\theta,h)-\lambda_{k,n}|\geqslant \rho, \ \ l\neq k. 
\end{equation}  
Therefore, $g_n(\nu_{k,n}(\theta,h))\neq0$, and we can divide~\eqref{asympt.equation.1} by $g_n(\nu_{k,n}(\theta,h))$. So the root $\nu_{k,n}(\theta,h)$ satisfies the equation 
\begin{equation}\label{a9}
\begin{array}{c}
\nu_{k,n}(\theta,h)-\lambda_{k,n}+\dfrac{a_1}{a_0\,n}\cdot \dfrac{Q'_{n}(\nu_{k,n}(\theta,h))}{g_n(\nu_{k,n}(\theta,h))}\cdot\dfrac{1}{h}+
\dfrac{a_2}{a_0\,n(n-1)}\cdot\dfrac{Q''_{n}(\nu_{k,n}(\theta,h))}{g_n(\nu_{k,n}(\theta,h))}\cdot\dfrac{1}{h^2}+\\
\\
\sum\limits_{j=3}^{n}\dfrac{a_j}{a_0}\cdot\dfrac{(n-j)!}{n!}\cdot\dfrac{Q^{(j)}_{n}(\nu_{k,n}(\theta,h))}
{g_n(\nu_{k,n}(\theta,h))}\cdot\dfrac1{h^j} =0.  
\end{array}
\end{equation} 
Now from~\eqref{c1} and~\eqref{a9} it follows that 
\begin{equation}\label {a13} 
\nu_{k,n}(\theta,h)=\lambda_{k,n}(\theta) +O\left(\frac{1}{h}\right). 
\end{equation}

Furthermore, from~\eqref{Function.g} it follows that the Taylor expansions of the functions $\dfrac{Q^{(l)}_n}{g_n}$, $l=1,2,3$,  at $\lambda_{k,n}$  have the forms
\begin{equation}\label{Taylor.1}
\begin{array}{c}
\dfrac{Q'_n(x)}{g_n(x)}=1+\dfrac{1}{2}\cdot\dfrac{Q''_n(\lambda_{k,n})}{Q_n'(\lambda_{k,n})}\cdot(x-\lambda_{k,n})+\\
\\
+\dfrac12\,\left[\dfrac23\cdot\dfrac{Q'''_n(\lambda_{k,n})}{Q_n'(\lambda_{k,n})}-\dfrac12\cdot\left(\dfrac{Q''_n(\lambda_{k,n})}{Q_n'(\lambda_{k,n})}\right)^2\right]\cdot(x-\lambda_{k,n})^2+O\left((x-\lambda_{k,n})^3\right),
\end{array}
\end{equation}
\begin{equation}\label{Taylor.2}
\dfrac{Q''_n(x)}{g_n(x)}=\dfrac{Q''_n(\lambda_{k,n})}{Q_n'(\lambda_{k,n})}+
\left[\dfrac{Q'''_n(\lambda_{k,n})}{Q_n'(\lambda_{k,n})}-\dfrac12\cdot\left(\dfrac{Q''_n(\lambda_{k,n})}{Q_n'(\lambda_{k,n})}\right)^2\right]\cdot(x-\lambda_{k,n})+O\left((x-\lambda_{k,n})^2\right),
\end{equation}
\begin{equation}\label{Taylor.3}
\dfrac{Q'''_n(x)}{g_n(x)}=\dfrac{Q'''_n(\lambda_{k,n})}{Q_n'(\lambda_{k,n})}+O(x-\lambda_{k,n}),
\end{equation}
as $x\to\lambda_{k,n}$.

If now we represent the difference $\nu_{k,n}(\theta,h)-\lambda_{k,n}(\theta)$ as follow
$$
\nu_{k,n}(\theta,h)-\lambda_{k,n}(\theta):=\dfrac{A}{h}+\dfrac{B}{h^2}+\dfrac{C}{h^3}+O\left(\dfrac1{h^4}\right),
$$
and substitute this to~\eqref{a9}, then with the expansions~\eqref{Taylor.1}--\eqref{Taylor.3}, after simplification we get
\begin{equation}\label{lambda.difference}
\begin{array}{l}
\nu_{k,n}(\theta,h)-\lambda_{k,n}(\theta)=-\dfrac{a_1}{na_0}\cdot\dfrac{1}{h}-
\dfrac{1}{a_0n(n-1)}\cdot\dfrac{Q''_n(\lambda_{k,n})}{Q'_n(\lambda_{k,n})}\cdot\left[a_2-\dfrac{n-1}{n}\cdot\dfrac{a_1^2}{2a_0}\right]\cdot\dfrac1{h^2}-\\
\\
\dfrac{1}{a_0n(n-1)(n-2)}\cdot\dfrac{Q'''_n(\lambda_{k,n})}{Q'_n(\lambda_{k,n})}\cdot
\left[a_3-\dfrac{n-2}{n}\cdot\dfrac{a_1a_2}{a_0}+\dfrac{(n-1)(n-2)}{3n^2}\cdot\dfrac{a_1^3}{a_0^2}\right]\cdot\dfrac1{h^3}+\omega(\theta,h),
\end{array}
\end{equation}
%
where
\begin{equation}\label {q5}
|\omega (\theta,h) |\leqslant \frac{K}{h^4}, \quad\text{as}\quad |h|\to\infty.
\end{equation} 
Here $K>0$ is a constant.

Note that
$$
a_2-\dfrac{n-1}{n}\cdot\dfrac{a_1^2}{2a_0}=\dfrac{1}{(n-2)!}\cdot p^{(n-2)}\left(-\dfrac{a_1}{na_0}\right)
$$
and
$$
a_3-\dfrac{n-2}{n}\cdot\dfrac{a_1a_2}{a_0}+\dfrac{(n-1)(n-2)}{3n^2}\cdot\dfrac{a_1^3}{a_0^2}=
\dfrac{1}{(n-3)!}\cdot p^{(n-3)}\left(-\dfrac{a_1}{na_0}\right).
$$
From these formul\ae\ and from~\eqref{lambda.difference} and~\eqref{q5} we obtain~\eqref{asympt.formula.double},
since 
$$
\mu_{k,n}(\theta,h)=\dfrac{\nu_{k,n}(\theta,h)}{h},
$$
as required.

\setcounter{equation}{0}
\section{Conclusion and open problems}\label{section:open.problems}

In this work, we studied properties of linear finite differences operators to preserve the certain classes of polynomials and entire functions. For the operator~\eqref{e3} we establish an analogue of the Hermite-Poulain theorem, and establish a number of remarkable properties of the operator 
\begin{equation*}\label{Delta.theta.2222}
\Delta_{\theta,h}f(x)=\dfrac{e^{i\theta} f(x+ih)-e^{-i\theta} f(x-ih)}{2i},
\end{equation*}
defined in~\eqref{Delta.theta}. We showed that this operator preserves the class $\mathcal{LP}$. More exactly, 
the operator~\eqref{Delta.theta} maps $\mathcal{LP}$ into the subclass of $\mathcal{LP}$ of functions with
only simple zeros. Moreover, the image of the class of hyperbolic polynomials $\mathcal{HP}$ is the subclass of $\mathcal{LP}$ of polynomials with simple roots with minimal mesh given by~\eqref{mesh.main.ineq}. Thus, in connection with Theorems \ref{th:mth5} and \ref{th:mth4} the following natural question arises. 

\vspace{2mm}

\noindent\textbf{Open problem.} \textit{To describe the image of the set of hyperbolic polynomials (of the set of hyperbolic	polynomials of degrees not greater than a given $n$)  under the linear operator of the form~\eqref{Delta.theta}}.

\vspace{2mm}

For example, it is easy to see that~\eqref{Delta.theta} preserves the class of the so-called self-interlacing polynomials for~$\theta=0$.
\begin{definition}[\cite{Tyaglov_self}]
	A real polynomial $p(z)$ is called \textit{self-interlacing} if it has real and simple roots, and the roots of $p(z)$
	strictly interlace the roots of $p(-z)$. The class of self-interlacing polynomials is denoted~\textbf{SI}.
\end{definition}

The operator $\Delta_{0,h}$ preserves the class~\textbf{SI}.

\begin{theorem}\label{Theorem.SI}
	If $p(z)$ is a self-interlacing polynomial, then the polynomial
	$$
	\Delta_{0,h}p(z)=\dfrac{p(z+ih)-p(z-ih)}{2ih}
	$$
	also is self-interlacing for any $h>0$.
\end{theorem}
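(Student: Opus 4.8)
The plan is to reduce the statement to the classical Hermite--Biehler correspondence and then to exploit that $\Delta_{0,h}$ (with $h>0$) is a preserver of the closed upper half-plane. Recall that a pair of real polynomials $P,Q$ of the same degree has real, simple, strictly interlacing zeros if and only if $P+iQ$ has all of its zeros in one of the open half-planes $\{\Im z>0\}$ or $\{\Im z<0\}$ (Hermite--Biehler; an iff, so both directions are available). Writing $q(z):=p(-z)$, the self-interlacing hypothesis on $p$ says exactly that $p$ and $q$ are real, of the same degree, with simple strictly interlacing zeros; hence $g(z):=p(z)+ip(-z)$ has all of its zeros in one open half-plane. Replacing $g$ by $\overline{g(\bar z)}=p(z)-ip(-z)$ if necessary (this only reflects the zeros across $\mathbb{R}$, since $p$ is real), we may assume all zeros of $g$ lie in the open upper half-plane $\{\Im z>0\}$.

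Next I would transport this half-plane location through the operator. Since $p$ has real coefficients, a direct computation gives $\Delta_{0,h}[p(-\,\cdot\,)](z)=-\widetilde p(-z)$, where $\widetilde p:=\Delta_{0,h}p$, and therefore, by linearity,
$$\Delta_{0,h}g(z)=\widetilde p(z)-i\,\widetilde p(-z).$$
The goal is to show that $\Delta_{0,h}g$ again has all of its zeros in the open upper half-plane. On the one hand, the step $ih$ is orthogonal to the real axis, so $\Delta_{0,h}$ satisfies the hypotheses of Corollary~\ref{Corol:mth2.} (with $\varphi=0$) and hence preserves the set of polynomials whose zeros lie in the closed upper half-plane; applied to $g$, this places all zeros of $\Delta_{0,h}g$ in $\{\Im z\ge0\}$. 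On the other hand, I claim $\Delta_{0,h}g$ has no real zeros. Indeed, for real $z$ and $u=z-w$ one has the elementary identity $|u+ih|^2-|u-ih|^2=4h\,\Im u$; letting $w$ range over the zeros $w_k$ of $g$ (all with $\Im w_k>0$) and using $\Im(z-w_k)=-\Im w_k<0$ gives $|z+ih-w_k|<|z-ih-w_k|$ for every $k$, whence
$$\left|\frac{g(z+ih)}{g(z-ih)}\right|=\prod_k\frac{|z+ih-w_k|}{|z-ih-w_k|}<1$$
(note $g(z-ih)\neq0$, because $z-ih$ lies in the lower half-plane). Thus $g(z+ih)\neq g(z-ih)$, i.e. $\Delta_{0,h}g(z)\neq0$ for every real $z$. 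Combining the two facts, all zeros of $\Delta_{0,h}g$ lie in $\{\Im z>0\}$.

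Finally I would run the Hermite--Biehler correspondence in reverse on $\Delta_{0,h}g=\widetilde p(z)+i\bigl(-\widetilde p(-z)\bigr)$: since this polynomial has all zeros in $\{\Im z>0\}$, its two real polynomial components $\widetilde p$ and $-\widetilde p(-\,\cdot\,)$ have real, simple, strictly interlacing zeros. Equivalently, $\widetilde p=\Delta_{0,h}p$ has real simple zeros and the zeros of $\widetilde p(z)$ strictly interlace those of $\widetilde p(-z)$, which is precisely the assertion that $\Delta_{0,h}p$ is self-interlacing.

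I expect the delicate point to be the strictness, i.e. the absence of real zeros of $\Delta_{0,h}g$: half-plane preservation alone (Corollary~\ref{Corol:mth2.}) only yields the closed half-plane, and a real zero of $\Delta_{0,h}g$ would correspond exactly to a common zero of $\widetilde p(z)$ and $\widetilde p(-z)$, that is, to a failure of strict interlacing. The modulus estimate above, where both $h>0$ and the open (rather than merely closed) location of the zeros of $g$ are used, is the crux that rules this out. As an alternative route to the non-strict interlacing one could bypass Hermite--Biehler: since $q(z)=p(-z)\in\mathcal{HP}$ interlaces $p$, Theorem~\ref{Theorem.pseudo.Obreschkov} gives $cp+dq\in\mathcal{HP}$ for all real $c,d$, and applying the $\mathcal{HP}$-preserver $\Delta_{0,h}$ (Corollary~\ref{Corol.Hermite.Poulain}) together with Theorem~\ref{Theorem.pseudo.Obreschkov} again yields non-strict interlacing of $\Delta_{0,h}p$ and $\Delta_{0,h}q=-\widetilde p(-\,\cdot\,)$; one would then still need the no-common-zero argument above to upgrade to strict interlacing, and Theorem~\ref{th:mth3} to guarantee simplicity of the zeros of $\widetilde p$.
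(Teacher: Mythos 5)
Your proof is correct, but it takes a genuinely different route from the paper's. The paper stays entirely on the real line: it sets $q(z)=ap(z)+bp(-z)$, observes by linearity that $aQ(z)-bQ(-z)=\Delta_{0,h}q(z)$ where $Q=\Delta_{0,h}p$, invokes Theorem~\ref{th:mth3} to conclude that every such combination has only real and \emph{simple} zeros, and then applies the Hermite--Kakeya--Obreschkov criterion (Theorem~\ref{Theorem.pseudo.Obreschkov}) to deduce that $Q(z)$ and $Q(-z)$ interlace. You instead pass to the complex picture via Hermite--Biehler, push the open-half-plane location of the zeros of $g=p+ip(-\,\cdot\,)$ through $\Delta_{0,h}$ using Corollary~\ref{Corol:mth2.}, and supply the strictness by the bare-hands estimate $|g(z+ih)|<|g(z-ih)|$ on $\mathbb{R}$. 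Each approach has its advantages: your modulus inequality makes the openness (hence strict interlacing and simplicity) completely explicit and essentially self-contained --- indeed, applied at points with $\Im z<0$ it reproves the closed-half-plane preservation, so the appeal to Corollary~\ref{Corol:mth2.} could even be dropped --- while the paper's route imports simplicity wholesale from Theorem~\ref{th:mth3}. One correction to your closing remark: the ``alternative route'' you sketch (which is essentially the paper's proof) does not require a separate no-common-zero argument, because a common zero $Q(z_0)=Q(-z_0)=0$ would let one choose $a,b$ making $z_0$ a \emph{double} zero of $aQ(z)-bQ(-z)$, contradicting the simplicity that Theorem~\ref{th:mth3} guarantees for every such combination; so strictness is already encoded in that argument, though the paper does not spell this out.
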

\begin{proof}
	By Theorem~\ref{Theorem.pseudo.Obreschkov}, for any $a,b\in\mathbb{R}$, the polynomial 
	$$
	q(z)=ap(z)+bp(-z)
	$$
	as only real (and simple) roots. Let us prove that if 
	$$
	Q(z)= \dfrac{p(z+ih)-p(z-ih)}{2ih},
	$$
	then the polynomial $aQ(z)-bQ(-z)$ has only re]al roots for any $a,b\in\mathbb{R}$. 
	
	Indeed, it is easy to see that
	$$
	aQ(z)-bQ(-z)=\dfrac{ap(z+ih)-ap(z-ih)-bp(-z+ih)+bp(-z-ih)}{2ih}=\dfrac{q(z+ih)-q(z-ih)}{2ih}.
	$$
	Since $q(z)$ has only real roots, the polynomial  $aQ(z)-bQ(-z)$ has only real and simple roots by Theorem~\ref{th:mth3}. Now from Theorem~\ref{Theorem.pseudo.Obreschkov}, the polynomials $Q(z)$ and $Q(-z)$
	have real simple and interlacing roots. Therefore, $Q(z)\in\mathbf{SI}$, as required.
\end{proof}

However, the action of the operator~\eqref{Delta.theta} on the class of all complex polynomials and its subclasses is also interesting. For example, the self-interlacing polynomials have a strong connection the class of real Hurwitz stable polynomials, the polynomials with roots in the open left half-plane, see~\cite{Tyaglov_self}. The operator~$\Delta_{\theta, h}$ is a complex zero decreasing operator as it was mentioned by N.G.\,de Bruijn~\cite{Br}. At the same time, calculations show that the following conjecture can be true.
\begin{conjecture}
\textit{For any stable polynomial~$p$ the polynomials}
$$
\Delta_{\theta, h}^m(p)(z),\qquad m=1,\ldots,\deg p-1.
$$
\textit{have no nonreal zeroes in the closed right half-plane.}
\end{conjecture}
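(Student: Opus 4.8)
The plan is to recast the iterated operator as a single multiplier in the differentiation operator. On polynomials one has $e^{\pm ihD}p(x)=p(x\pm ih)$ with $D=d/dx$, so
\[
\Delta_{\theta,h}=\tfrac{1}{2i}\bigl(e^{i\theta}e^{ihD}-e^{-i\theta}e^{-ihD}\bigr)=\sin(\theta+hD),
\]
and therefore $\Delta_{\theta,h}^{m}=\phi(D)$ with $\phi(z)=\sin^{m}(\theta+hz)$. Since $\sin\in\mathcal{LP}$ and $\theta+hz$ is real-linear, $\phi\in\mathcal{LP}$; its zeros are the reals $(k\pi-\theta)/h$, each of multiplicity $m$, occurring with both signs. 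Thus the conjecture is equivalent to the assertion that the single Laguerre--P\'olya multiplier $\phi(D)$ sends a stable polynomial to one with no nonreal zeros in the closed right half-plane. Because $\phi(D)$ preserves reality of coefficients, the nonreal zeros of $\phi(D)p$ occur in conjugate pairs, so the claim is the same as: every nonreal zero of $\phi(D)p$ lies in the open left half-plane.

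First I would settle $m=1$ using the explicit zero locus. A point $w$ is a zero of $\Delta_{\theta,h}p$ exactly when $\Phi(w):=p(w+ih)/p(w-ih)=e^{-2i\theta}$. For a stable $p$ the rational function $\Phi$ has all its zeros $z_{j}-ih$ and all its poles $z_{j}+ih$ in the open left half-plane, has modulus one on $\mathbb{R}$ (the factors from each conjugate pair of roots balance), and for $w$ in the closed right half-plane every factor $\frac{(w-z_{j})+ih}{(w-z_{j})-ih}$ has argument in $(0,\pi)$, since $\Re(w-z_{j})>0$ there. The plan is to run the argument principle for $\Phi-e^{-2i\theta}$ along the boundary of the right half-plane (the imaginary axis closed up by a large semicircle) and to show that the winding of $\Phi$ about $e^{-2i\theta}$ accounts for exactly the zeros that the modulus-one property forces onto $\mathbb{R}$, leaving none inside. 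The known fact that $\Delta_{\theta,h}$ is complex-zero-decreasing for real step is the natural companion: it keeps the total number of nonreal zeros from growing, so it suffices to prevent any nonreal zero from crossing into $\Re w\ge0$.

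For general $m$ I would try to reduce $\phi(D)$ to elementary factors: approximate $\phi$ uniformly on compacta by hyperbolic polynomials (its truncated products), apply the classical Hermite--Poulain theorem to each partial-product operator, and pass to the limit by Hurwitz's theorem, controlling the zeros that might escape to infinity. This reduces matters to a single real-linear factor $D-\rho I$, $\rho\in\mathbb{R}$, acting on a real polynomial all of whose nonreal zeros lie in the open left half-plane. Writing the zero condition at a putative nonreal zero $w$ as $\sum_{j}(w-z_{j})^{-1}=\rho$ and comparing real and imaginary parts against the locations of the $z_{j}$ should dispose of the factors with $\rho\le0$.

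The hard part will be the factors with $\rho>0$ together with the limit interchange. A factor $D-\rho I$ with $\rho>0$ is not a stability preserver: it can drag a conjugate pair of zeros rightward, and in the presence of positive real zeros of an intermediate polynomial a bare Gauss--Lucas estimate does not keep the pair in the open left half-plane. I expect a factor-by-factor induction to fail for precisely this reason: the class ``all nonreal zeros in the open left half-plane'' need not be preserved by an individual factor, only by the full operator $\sin(\theta+hD)$. The crux is therefore to produce a quantitative invariant genuinely tied to stability---for instance a uniform bound $\Re z_{j}\le-c(h)<0$ on the nonreal zeros that survives each application of $\Delta_{\theta,h}$---and to show it persists through the infinite product and through all $m\le\deg p-1$ iterations. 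Constructing such an invariant, rather than the routine estimates around it, is the new ingredient the conjecture seems to require.
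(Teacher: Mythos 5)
This statement is not proved in the paper at all: it appears in Section~7 as an open \emph{conjecture}, supported only by the authors' numerical experiments, so there is no paper proof to compare your argument against. Your proposal, by your own closing admission, is also not a proof but a plan whose decisive ingredient is missing. That missing ingredient is exactly the content of the conjecture: a quantitative invariant (e.g.\ a bound $\Re z_j\le -c(h)<0$ on nonreal zeros) that is preserved by $\Delta_{\theta,h}$ and by its iterates. Note that the operator genuinely fails to preserve the left half-plane here, because its step $ih$ is \emph{parallel} to the imaginary axis rather than orthogonal to it (the paper's Corollary~1.9 / Kuipers-type preservation requires the step orthogonal to the boundary line), so no soft half-plane-preservation argument can close the gap.

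Two intermediate steps in your sketch are also weaker than they look. First, for $m=1$ the observation that each factor $\frac{(w-z_j)+ih}{(w-z_j)-ih}$ has argument in $(0,\pi)$ on the closed right half-plane does not prevent the product of $n\ge 2$ such factors from equaling $e^{-2i\theta}$; the arguments sum to something in $(0,n\pi)$, and the winding-number bookkeeping you defer is precisely where the real work lies (indeed, the product \emph{must} hit $e^{-2i\theta}$ at the real zeros of $\Delta_{\theta,h}(p)$, some of which can lie in $\Re w\ge 0$, so the accounting has to separate real from nonreal solutions --- this is the conjecture itself). Second, the complex-zero-decreasing property of $\Delta_{\theta,h}$ controls only the \emph{number} of nonreal zeros, not their location: a conjugate pair can migrate into the closed right half-plane without changing the count, so CZD cannot serve as the companion fact you want. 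Your diagnosis of why a factor-by-factor Hermite--Poulain induction fails (factors $D-\rho I$ with $\rho>0$ are not stability preservers) is sound and matches why the problem is hard, but identifying the obstruction is not the same as overcoming it; as it stands, the statement remains open both in the paper and in your proposal.
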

Moreover, calculations allow us to pose the following conjecture.
\begin{conjecture}
\textit{	Let $p$ be the following polynomial}
	$$
	p(x)=x^n+x^{n-1}+\cdots+x+1=\dfrac{x^{n+1}-1}{x-1},
	$$
\textit{	then the polynomial $\Delta_{\theta, h}(p)(x)$ is self-interlacing, or self-interlacing multiplied by $x$.}
\end{conjecture}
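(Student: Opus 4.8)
The plan is to study the real zeros of $Q:=\Delta_{\theta,h}(p)$ through the argument of $p$ along the horizontal line $\Im z=h$. Since $p$ has real coefficients, $p(x-ih)=\overline{p(x+ih)}$ for real $x$, so
\begin{equation*}
Q(x)=\frac{e^{i\theta}p(x+ih)-\overline{e^{i\theta}p(x+ih)}}{2i}=\Im\bigl(e^{i\theta}p(x+ih)\bigr),\qquad x\in\mathbb{R},
\end{equation*}
and the real zeros of $Q$ are precisely the real $x$ with $\arg p(x+ih)\equiv-\theta\pmod{\pi}$. Writing $p(z)=\prod_{k=1}^{n}(z-\omega_k)$ with $\omega_k=e^{2\pi ik/(n+1)}\in\mathbb{T}$, I would then compute
\begin{equation*}
\frac{d}{dx}\arg p(x+ih)=\Im\frac{p'(x+ih)}{p(x+ih)}=-\sum_{k=1}^{n}\frac{h-\Im\omega_k}{\lvert x+ih-\omega_k\rvert^{2}}.
\end{equation*}

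Set $M_n:=\max_{1\le k\le n}\Im\omega_k\le1$. For $h>M_n$ every summand above is positive, so $\arg p(x+ih)$ is strictly decreasing on $\mathbb{R}$; moreover the line $\Im z=h$ then passes above every root, so each factor $z-\omega_k$ turns by exactly $-\pi$ and the total decrease of $\arg p(x+ih)$ equals $n\pi$. A strictly monotone function whose range is an interval of length $n\pi$ meets each level $\equiv-\theta\pmod\pi$ transversally, which produces exactly $n$ simple real zeros when $\theta\in(0,\pi)$ (and $n-1$ when $\theta=0$, one level escaping to infinity). This exhausts $\deg Q$, so in this range $Q$ is hyperbolic with only simple zeros.

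For the self-interlacing I would invoke Theorem~\ref{Theorem.pseudo.Obreschkov}: it suffices to show that $\alpha Q(x)+\beta Q(-x)$ is hyperbolic for all real $\alpha,\beta$, which forces the zeros of $Q(x)$ and $Q(-x)$ to interlace and places $Q$ in the class \textbf{SI}. The tool here is the exact palindromic identity satisfied by $p$, namely
\begin{equation*}
(z+1)\,p(-z)=-(z-1)\,p(z)\quad(n\ \text{odd}),\qquad (z+1)\,p(-z)=(z-1)\,p(z)+2\quad(n\ \text{even}),
\end{equation*}
which expresses $p(-x+ih)$ in terms of $\overline{p(x+ih)}$ through a Möbius factor and thereby ties $Q(-x)$ to $Q(x)$. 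Feeding this relation into the argument count of the previous paragraph, applied to the pencil $\alpha Q(x)+\beta Q(-x)$, should yield its hyperbolicity, while the parity of $n$—equivalently, whether $-1$ is a root of $p$—decides whether the symmetry matches $\deg Q$ exactly (giving $Q\in\textbf{SI}$) or is off by one zero (giving $Q=x\cdot(\text{self-interlacing})$), which is exactly the dichotomy in the statement.

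The hard part is twofold. The monotonicity engine only runs while $\Im z=h$ lies above all the roots, i.e.\ for $h>M_n$; once $h<M_n$ some summands change sign, $\arg p(x+ih)$ is no longer monotone, and $Q$ genuinely acquires nonreal zeros—already for $n=2$, $\theta=\tfrac{\pi}{2}$ the discriminant of $Q$ equals $4h^2-3\sin^2\theta$, which turns negative for $h<\tfrac{\sqrt3}{2}$—so the conjecture must carry an (unstated) largeness hypothesis on $h$, and locating the sharp threshold is the central obstacle. The second difficulty is to convert the palindromic relation of the third step into a clean proof that the pencil $\alpha Q(x)+\beta Q(-x)$ is hyperbolic, and to carry out the parity bookkeeping that rigorously separates the \textbf{SI} and $x\cdot\textbf{SI}$ cases.
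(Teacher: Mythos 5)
First, note that the paper contains no proof of this statement: it is posed in Section~\ref{section:open.problems} as a conjecture supported only by numerical experiments, so there is no argument of the authors to compare yours against. Judged on its own, your proposal splits into one correct and genuinely valuable observation and one unfinished half. The valuable part is the small-$h$ analysis: your discriminant computation for $n=2$ is right (for $p(x)=x^2+x+1$ one gets $\Delta_{\theta,h}(p)(x)=\sin\theta\,(x^2+x+1-h^2)+h\cos\theta\,(2x+1)$ with discriminant $4h^2-3\sin^2\theta$), so for $\theta=\tfrac{\pi}{2}$ and $0<h<\tfrac{\sqrt3}{2}$ the polynomial $\Delta_{\theta,h}(p)$ has non-real zeros and is neither self-interlacing nor $x$ times a self-interlacing polynomial; the same already happens for $\theta=0$, $n=3$, where $\Delta_{0,h}(p)(x)=h(3x^2+2x+1-h^2)$ has negative discriminant for $h<\sqrt{2/3}$. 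So the conjecture as literally stated is false, and any correct version must restrict $h$; your threshold $h>\max_k\Im\omega_k$ is the natural candidate. The hyperbolicity argument in that range (strict monotonicity of $\arg p(x+ih)$ along $\Im z=h$ and counting the levels congruent to $-\theta$ modulo $\pi$ across a total variation of $n\pi$) is sound.

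The gap is the self-interlacing step, which is the actual content of the conjecture, and your reduction does not close it. For $\theta=0$ one has $\alpha Q(x)+\beta Q(-x)=\Delta_{0,h}\bigl(\alpha p(z)-\beta p(-z)\bigr)(x)$, and for general $\theta$ the pencil equals $\Im\bigl(e^{i\theta}\alpha p(w)-e^{-i\theta}\beta p(-w)\bigr)$ with $w=x+ih$; either way your monotonicity engine now needs \emph{all} zeros of the pencil polynomial $e^{i\theta}\alpha p(z)-e^{-i\theta}\beta p(-z)$ to lie below the line $\Im z=h$, uniformly in $(\alpha,\beta)$, and these zeros are no longer roots of unity, so the positivity of every summand in your derivative formula is not guaranteed. (This is precisely where the paper's Theorem~\ref{Theorem.SI} succeeds: there $p$ is assumed self-interlacing, so the pencil $\alpha p(z)+\beta p(-z)$ is hyperbolic and Theorem~\ref{th:mth3} applies; here $p$ is not even real-rooted, so that shortcut is unavailable.) Your palindromic identities are correct and are plausibly the right tool to locate the pencil's zeros, but that step is not carried out. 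Two further loose ends: Theorem~\ref{Theorem.pseudo.Obreschkov} only delivers non-strict interlacing, whereas membership in \textbf{SI} requires strict interlacing, so you must separately exclude common zeros of $Q(x)$ and $Q(-x)$; and the parity bookkeeping distinguishing $Q\in\textbf{SI}$ from $Q=x\cdot(\text{self-interlacing})$ is only asserted. As it stands this is a promising programme plus a genuine refutation of the unrestricted statement, not a proof; the small-$h$ counterexample should in any case be recorded, since it shows the conjecture needs amending.
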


Furthermore, Theorem~\ref{th:mth3} says that if $p\in\mathcal{HP}$, then all roots of the polynomial $\Delta_{\theta,h}(p)$ are simple. Calculations show that most of the polynomials in the image of the 
operator $\Delta_{\theta,h}(p)$ have simple roots.
\begin{conjecture}
\textit{	For almost all  polynomial $p$, the polynomials $\Delta_{\theta, h}(p)$ have only simple roots. The exclusion: Polynomials with multiple roots of the form $\pm nih$, $n\in\mathbb{N}$.}
\end{conjecture}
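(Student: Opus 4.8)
The statement has two parts: a genericity claim (that \emph{almost all} $p$ yield a $\Delta_{\theta,h}(p)$ with only simple roots) and a description of the exceptional locus. The plan is to settle the genericity part first, since it admits a clean algebraic proof, and then to treat the precise description of the exceptions as the genuinely hard, conjectural core.

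\textbf{Genericity.} Fix $n=\deg p$ and fix $\theta,h$. The map $p\mapsto\Delta_{\theta,h}(p)$ is linear in the coefficients of $p$, so every coefficient of $\Delta_{\theta,h}(p)$ is a fixed linear form in $(a_0,\dots,a_n)\in\mathbb{C}^{n+1}$. Consequently the discriminant $D(a_0,\dots,a_n):=\operatorname{disc}\bigl(\Delta_{\theta,h}(p)\bigr)$ is a polynomial in $a_0,\dots,a_n$, and I would show $D\not\equiv0$: its zero set is then a proper algebraic subvariety of $\mathbb{C}^{n+1}$, hence of Lebesgue measure zero and nowhere dense, and for every $p$ off this set $\Delta_{\theta,h}(p)$ has only simple roots. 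To produce one witness I would take $p(z)=z^n$: by Lemma~\ref{Lemma.roots.Delta.x^n} the polynomial $\Delta_{\theta,h}(z^n)=h^nQ_n(z/h)$ has only simple zeros for every $h\ne0$ and every $\theta$ (the range $\theta\in(\pi,2\pi)$ reduces to $\theta-\pi$ since $\Delta_{\theta,h}=-\Delta_{\theta-\pi,h}$), so $D(0,\dots,0,1)\ne0$. This settles the genericity claim degree by degree, and hence for almost all $p$; one must only note the harmless additional measure-zero locus where the degree of $\Delta_{\theta,h}(p)$ drops below its generic value ($n$, or $n-1$ when $\theta=0$).

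\textbf{The exceptional locus.} To see which $p$ are excluded I would write out the double-root conditions. Since $\Delta_{\theta,h}$ commutes with every shift $S_\lambda$, $\lambda\in\mathbb{C}$, a prospective double root may be placed at the origin; with $a:=ih$ and $b:=-ih$ it occurs iff
\[
e^{i\theta}p(a)=e^{-i\theta}p(b),\qquad e^{i\theta}p'(a)=e^{-i\theta}p'(b).
\]
I would then split into two regimes. If $p(a)p(b)\ne0$, these two equations together with the free choice of the double-root location carve out (a piece of) the codimension-one discriminant locus on which $p$ itself need carry no special root structure — the ``accidental'' multiple roots. If instead $p(a)=p(b)=0$, then expanding $\Delta_{\theta,h}(p)$ near $0$ shows that its order of vanishing there equals $\min(\mu_a,\mu_b)$, where $\mu_a,\mu_b$ are the multiplicities of $p$ at $a=+ih$ and $b=-ih$; thus a double root is \emph{forced} precisely when $\min(\mu_a,\mu_b)\ge2$, i.e. when $p$ has multiple roots at both $+ih$ and $-ih$. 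This is exactly the mechanism behind the example $(z^2+h^2)^2\mapsto z^3$, and, after re-centering via $S_\lambda$ and allowing a whole ladder of such symmetric multiple roots along the line $\{k\,ih\}$, it is the reading I would give to the phrase ``multiple roots of the form $\pm nih$''.

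\textbf{Main obstacle.} The hard part is to prove that these forced, lattice-type multiplicities are the \emph{only} robust source of multiple roots: one must control the accidental regime $p(a)p(b)\ne0$, where $e^{2i\theta}=p(b)/p(a)=p'(b)/p'(a)$, and show that it contributes nothing beyond a measure-zero set of otherwise non-special polynomials, while simultaneously pinning down the exact arithmetic role of the points $\pm nih$ and the admissible multiplicities in the forced regime. Sharpening the heuristic ``$D\not\equiv0$ plus the $(z^2+h^2)^2$ mechanism'' into the precise characterization asserted in the conjecture is where I expect the real difficulty to lie, and it is the reason the statement is posed as a conjecture rather than proved as a theorem.
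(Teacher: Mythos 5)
First, a point of order: the paper does not prove this statement at all. It is one of the conjectures in Section~\ref{section:open.problems}, supported only by the phrase ``calculations show'' and by the example $(x^2+h^2)^2$, for which $p(x+ih)-p(x-ih)=8ihx^3$, at the end of Section~\ref{section:polynomials.strip}; so there is no proof to compare yours against, and your attempt must be judged on its own. Its genericity half is correct and already goes beyond what the paper establishes. Indeed, $\Delta_{\theta,h}(p)$ depends linearly on the coefficients $(a_0,\dots,a_n)$; on $\{a_0\neq0\}$ the image has leading coefficient $a_0\sin\theta$ (degree exactly $n$) when $\sin\theta\neq0$, and $\pm nha_0$ (degree exactly $n-1$) when $\theta\in\{0,\pi\}$, so the degree-drop locus you guard against is actually empty and no extra care is needed. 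Hence the discriminant $D$ of the image, taken in the appropriate degree, is a polynomial in $(a_0,\dots,a_n)$, and Lemma~\ref{Lemma.roots.Delta.x^n} applied to the witness $z^n$ (together with your correct reduction $\Delta_{\theta,h}=-\Delta_{\theta-\pi,h}$) gives $D\not\equiv0$; the exceptional set in each degree is therefore a proper algebraic hypersurface, of Lebesgue measure zero. This proves the ``almost all'' clause rigorously.

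The gap is in the second half, and it is not where you place it. You present the characterization of the exceptional locus as open but plausibly true; in fact, for a fixed $\theta$ it is false, and your own framework nearly detects this. Your assertion that when $p(ih)=p(-ih)=0$ the order of vanishing of $\Delta_{\theta,h}(p)$ at $0$ \emph{equals} $\min(\mu_a,\mu_b)$ is incorrect: it is only $\geq\min(\mu_a,\mu_b)$, since for $\mu_a=\mu_b=\mu$ the leading Taylor coefficients may cancel, namely when $e^{i\theta}p^{(\mu)}(ih)=e^{-i\theta}p^{(\mu)}(-ih)$. This missed case is not cosmetic: it produces exceptional polynomials with no multiple roots whatsoever. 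Take $\theta=\pi/2$ and $h=1$, so that $\Delta_{\pi/2,1}(p)(x)=\tfrac12\left(p(x+i)+p(x-i)\right)$, and $p(z)=z^2+1$: its roots $\pm i$ are simple, yet $\Delta_{\pi/2,1}(p)(x)=x^2$ has a double root. (In degree $2$ the forced family with $m,k\geqslant2$ is empty, while for $h=1$ the exceptional degree-$2$ polynomials form the nonempty curve $(a-b)^2=-4/\sin^2\theta$ in root coordinates, no point of which carries a multiple root.) Consequently the exclusion clause, read for fixed $\theta$ via the paper's follow-up sentence $p(x)=q(z)(z+inh)^m(z+i(n+2)h)^k$ with $m,k\geqslant2$, does not exhaust the exceptions; at best the conjecture can describe exceptions that are uniform in $\theta$, since the cancellation above pins down $e^{2i\theta}$ whereas the forced mechanism of double multiple roots works for every $\theta$. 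So the solid, correct content of your proposal is the discriminant argument; the most valuable next step would have been to notice that your ``accidental''/borderline regime is provably nonempty, and hence that the conjecture needs reformulation before anyone can hope to prove it.
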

Such polynomials have the form $p(x)=q(z)(z+inh)^m(z+i(n+2)h)^k$, where $m,k\geqslant 2$ and $q(z)$ is an arbitrary complex polynomial.

\vspace{2mm}

Finally, note that in~\cite{Br} and~\cite{Cardon}, it was proved that the operator~\eqref{Delta.theta} is zero strip 
decreasing if defined on $\mathbb{R}[z]$. Calculations show that this property is preserved for arbitrary complex polynomials.

\begin{conjecture}
\textit{The operator~\eqref{Delta.theta} defined on $\mathbb{C}[z]$ zero strip decreasing.}
\end{conjecture}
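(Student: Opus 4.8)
The plan is to show that every zero of $\Delta_{\theta,h}(p)$ lies, in the direction perpendicular to $h$, strictly between the extreme zeros of $p$; this is precisely the assertion that the strip bounded by the two lines parallel to $h$ that enclose the zeros of $p$ strictly shrinks under the operator~\eqref{Delta.theta}. Writing $h=|h|e^{i\beta}$ and substituting $z=e^{i\beta}w$, $\widetilde{p}(w):=p(e^{i\beta}w)$, one checks that $\Delta_{\theta,h}p(e^{i\beta}w)=\Delta_{\theta,|h|}\widetilde{p}(w)$, so the zeros and the $h$-aligned strip merely rotate by $e^{i\beta}$; hence it suffices to treat $h>0$, where the relevant strip is the horizontal one and its width is measured by $\Im$.

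The first step is to pass from the (complex) zero equation to a $\theta$-independent modulus identity. Writing $p(z)=a_0\prod_{j=1}^n(z-z_j)$, a point $\mu$ is a zero of $\Delta_{\theta,h}(p)$ exactly when $e^{i\theta}p(\mu+ih)=e^{-i\theta}p(\mu-ih)$; taking absolute values and using $|e^{\pm i\theta}|=1$ removes the phase and gives
\[
\prod_{j=1}^{n}\frac{|\mu+ih-z_j|}{|\mu-ih-z_j|}=1 .
\]
The crucial elementary observation is the identity $|\mu+ih-z_j|^2-|\mu-ih-z_j|^2=4h\,(\Im\mu-\Im z_j)$, so that for $h>0$ the $j$-th factor exceeds $1$ iff $\Im\mu>\Im z_j$, equals $1$ iff $\Im\mu=\Im z_j$, and is less than $1$ iff $\Im\mu<\Im z_j$. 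In other words each factor records only whether $\mu$ lies above or below $z_j$, independently of the real parts and of $h$.

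The conclusion follows by a balancing argument. If $\Im\mu\geqslant M:=\max_j\Im z_j$ while $p$ has at least one zero strictly below the top line, then every factor is $\geqslant1$ and at least one is $>1$, so the product exceeds $1$, contradicting the displayed identity; hence $\Im\mu<M$, and symmetrically $\Im\mu>m:=\min_j\Im z_j$. Since $\Delta_{\theta,h}(p)$ has finitely many zeros, each with imaginary part in the open interval $(m,M)$, the enclosing strip of $\Delta_{\theta,h}(p)$ is strictly narrower than that of $p$. The degenerate case in which all $z_j$ share one height (zeros on a single line parallel to $h$) is exactly the line-preserving situation of Theorem~\ref{th:mth1} and Theorem~\ref{th:simplicity.poly}: the image zeros again lie on one such line, so the width $0$ is preserved (non-strict decrease).

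The hard part is conceptual rather than computational: one must fix the correct notion of strip. The argument controls only the strip whose boundary lines are parallel to $h$, and this restriction is \emph{essential}. For a polynomial whose zeros lie on a line oblique to $h$, the image zeros generically fill a genuine two-dimensional region, so the minimal enclosing strip taken over \emph{all} directions can jump from $0$ to a positive width; thus the statement holds for the $h$-aligned strip and fails for the all-directions minimal strip. The main obstacle is therefore to confirm that the $h$-aligned reading is the intended one --- it is the reading consistent with the asymptotics of Theorem~\ref{th:mth6}, where the zeros approach a line parallel to $h$. A secondary, quantitative obstacle would be to upgrade the strict inclusion in $(m,M)$ to an explicit de Bruijn--type shrinkage law; my modulus argument yields strictness but not such a bound, and obtaining one would require analyzing the product identity more finely rather than only its coarse sign structure.
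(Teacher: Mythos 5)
You should first be aware that the paper contains \emph{no proof} of this statement: it is posed in Section~\ref{section:open.problems} as an open conjecture, and the authors say explicitly in the Introduction that ``we do not have a strict proof of this fact.'' So there is nothing in the paper to compare your argument against; it must stand on its own. On its merits, your argument is correct for the natural precise reading of the conjecture, namely the strip whose boundary lines are parallel to $h$ (equivalently, perpendicular to the shifts $\pm ih$). The identity $|\mu+ih-z_j|^2-|\mu-ih-z_j|^2=4h\,(\Im\mu-\Im z_j)$ and the resulting sign-balancing argument are exactly the technique the paper itself uses in the proof of Lemma~\ref{Lemma.step.1} and in the sufficiency part of Theorem~\ref{th:mth2}; what you add is the observation that when the zeros of $p$ do not all lie on one horizontal line, at least one factor is \emph{strictly} greater than $1$, which upgrades ``the fixed strip is preserved'' to ``every zero of $\Delta_{\theta,h}(p)$ satisfies $m<\Im\mu<M$, hence the minimal enclosing $h$-parallel strip strictly shrinks.'' Your remark that the conjecture would be false for the minimal strip taken over all directions (zeros on a line oblique to $h$ give width zero, while their images are in general not collinear) is also sound in substance, and together with the asymptotics of Theorem~\ref{th:mth6} it justifies the $h$-parallel reading.

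Two points need attention. First, a small but genuine patch: the product identity $\prod_j|\mu+ih-z_j|/|\mu-ih-z_j|=1$ presupposes $p(\mu-ih)\neq0$, and you never exclude that case. It is easy to do: if $p(\mu-ih)=0$, the zero equation $e^{i\theta}p(\mu+ih)=e^{-i\theta}p(\mu-ih)$ forces $p(\mu+ih)=0$ as well, which is impossible under your standing assumption $\Im\mu\geqslant M$, since $\Im(\mu+ih)=\Im\mu+h>M$; so inside the contradiction argument the product is well defined. Second, a caveat on scope: what you prove is the qualitative statement that the minimal $h$-parallel strip strictly decreases (the image has finitely many zeros, all with $\Im\mu$ in the open interval $(m,M)$). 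The $\mathbb{R}[z]$ results of de Bruijn~\cite{Br} and Cardon~\cite{Cardon}, which the authors cite as their model for ``zero strip decreasing,'' are quantitative: roots in $\{|\Im z|\leqslant r\}$ are sent to roots in $\{|\Im z|\leqslant\sqrt{r^2-h^2}\}$, collapsing onto $\mathbb{R}$ once $r\leqslant h$. As you correctly note, no such collapse can hold on $\mathbb{C}[z]$, so any quantitative version must take a different form, and your coarse sign analysis does not yield one. If the conjecture is read as its wording suggests --- strict decrease of the strip of root location --- your argument settles it; if the authors intend a de Bruijn-type shrinkage law, that stronger statement remains open.
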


\section*{Acknowledgement}

\noindent The authors are grateful to J. Xia for his help with preparing the first version of the manuscript.

The work of M.\,Tyaglov was partially supported by The Program for Professor of Special Appointment (Oriental Scholar) at Shanghai Institutions of
Higher Learning, by the Joint NSFC-ISF Research Program, jointly funded by the National Natural Science Foundation of China and the
Israel Science Foundation (No.11561141001), and by Grant AF0710021 from Shanghai Jiao Tong University University.

\end{document}